\documentclass[11pt]{article}%
\usepackage[latin1]{inputenc}
\usepackage{srcltx}
\usepackage{latexsym}
\usepackage{amsthm}
\usepackage[Sonny]{fncychap}
\usepackage{enumerate}
\usepackage[dvips]{color,graphicx}
\usepackage[dvipdfm]{hyperref}
\usepackage{pst-all}
\usepackage{pstricks}
\usepackage{amsbsy}
\usepackage{amsmath}
\usepackage{amsfonts}
\usepackage{amssymb}
\usepackage{graphicx}%
\setcounter{MaxMatrixCols}{30}
\hsize=126mm \vsize=180mm
\parindent=5mm
\textwidth 15.0cm \oddsidemargin 0.50cm \addtolength{\textheight}{2.0cm}
\addtolength{\topmargin}{-1.5cm} \sloppy
\newtheorem{theorem}{Theorem}[section]
\newtheorem{definition}[theorem]{Definition}

\newtheorem{corollary}[theorem]{Corollary}
\newtheorem{lemma}[theorem]{Lemma}

\newtheorem*{pft3}{\bf{Proof of Theorem \ref{enteor31}}}

\newtheorem*{pfc}{\bf Proof of Corollary \ref{encorola3}}
\newtheorem*{333pfc}{\bf Proof of Corollary \ref{3corola3}}
\newtheorem*{pft1}{\bf Proof of Theorem \ref{enteorema2}}
\newtheorem*{33pff}{\bf Proof of Theorem \ref{3enteo2}}
\newtheorem*{pft11}{\bf Proof of Theorem \ref{enteor1}}
\numberwithin{equation}{section}

\begin{document}

\title{Stochastically Perturbed Chains of Variable Memory}
\author{{{Nancy L. Garcia} {\thanks{Rua S\'ergio Buarque de Holanda,
        651, CEP 13083-859, Campinas-SP, Brazil. \ E-mail: \ nancy@ime.unicamp.br}}}\\{\small Universidade Estadual de Campinas, Departamento de Estat\'{\i}stica.} \vspace{0.5cm}\\{{Lucas Moreira}{\thanks{Campus Universit\'ario Darcy Ribeiro
      ICC Centro - Bloco A - Asa Norte, CEP 70910-900, Bras\'{\i}lia-DF, Brazil.\ E-mail: \ lmoreira@unb.br}}}\\{\small Universidade de Bras\'{\i}lia, Departamento de Estat\'{\i}stica.} }
\maketitle
\date{}

\begin{abstract}
  In this paper, we study inference for chains of variable order under
  two distinct contamination regimes. Consider we have a chain of
  variable memory on a finite alphabet containing zero. At each
  instant of time an independent coin is flipped and if it turns head
  a contamination occurs. In the first regime a zero is read
  independent of the value of the chain. In the second regime, the
  value of another chain of variable memory is observed instead of the
  original one. Our results state that the difference between the
  transition probabilities of the original process and the
  corresponding ones of the contaminated process may be bounded above
  uniformly.  Moreover, if the contamination probability is small
  enough, using a version of the Context algorithm we are able to
  recover the context tree of the original process through a
  contaminated sample.  \bigskip


  \noindent\textbf{Key words:} Zero-inflated processes,
  Process-contamination, Robust statistics, Variable length chains.

\end{abstract}

\section{Introduction}

The goal of this paper is to answer the question proposed by Collet,
Galves and Leonardi (2008): ``Is it possible to recover the context
  tree of a variable length Markov chain from a noisy sample of
  the chain?'' In this paper, we answer positively this question for
two distinct contamination models.

Unbounded variable length Markov chains define a very flexible class
of stochastic chains of infinite order on a finite alphabet.  The idea
is that for each past, only a finite suffix of the past, called {\sl
  context}, is enough to predict the next symbol. These suffixes can be
represented by a countable, complete tree of finite contexts called
{\it context tree}. In a probabilistic suffix tree there is a
transition probability associated to each context.

Probabilistic suffix trees were first introduced by Rissanem (1983) in
the finite case as a flexible and parsimonious modelization tool for
data compression, approximating Markov chains of finite orders. He
called his model {\it finitely generated source}. In his work, not
only he introduces the model but also he proposes the algorithm {\sl
  Context} to estimate the context needed to predict the next
symbol, given a finite sample in an effective way. These models became
popular in the statistics literature under the name {\it Variable
  Length Markov Chains} coined by  B\"uhlmann and Wyner (1999). We
refer the reader to Galves and L\"ocherbach (2008) for a detailed
review on the subject.

In this paper we will study two contamination regimes for chains of
infinite order on a finite alphabet. In the first regime, for
simplicity, we assume the alphabet to be binary and at each step the
symbol 1 turns into a 0 with a small fixed probability independently
of everything. In the second regime, consider that we have another
chain of infinite order with the same alphabet as the original one. At
each instant of time the process randomly chooses the contaminant
process over the original one with a small fixed probability.

Our results state that the difference between the conditional
probabilities of the original process and the corresponding ones of
the contaminated process may be limited above uniformly. Furthermore,
we show that this upper bound is an increasing function of the
contamination probability.

Using a variant of the algorithm Context presented in Galves and
Leonardi (2008), our first result proves that even though a
contaminated sample was used, the estimated tree recovers the context
tree of the original process. That is, the proposed estimator of the context
tree is robust. 

We also observed that the results obtained for the first regime can be
easily extended to chains taking values on a finite size alphabet. 

Our paper is organized as follows, Section \ref{defres111} presents
some basic definitions. Section \ref{models888} presents the
contamination regimes and our main results. Section \ref{todasdem} is
dedicated to the proof of the results. Section \ref{comparacao}
compares the results presented in this work with the corresponding one
presented in Collet, Galves and Leonardi (2008).

\section{Definitions}\label{defres111}

Without loss of generality, let us consider the alphabet
${\cal{A}}\,=\,\{0,1,\ldots, N-1\}$, with size $|{\cal{A}}| =
N$. 

Given two integers $m\leq n$, we denote by $a_m^n$ the string $a_m
\ldots a_n$ of symbols in $A$. For any $m\leq n$, the length of the
string $a_m^n$ is denoted by $l(a_{m}^{n})$ and defined by $n-m+1$. We
will often use the notation $\emptyset$ which will stand for the empty
string, having length $|\emptyset|=0$. For any $n\in\mathbb{Z}$, we
will use the convention that $a_{n+1}^{n}=\emptyset$, and naturally
$l(a_{n+1}^{n})=0$. Given two strings $v$ and $v'$, we denote by $vv'$
the string of length $l(v) + l(v') $ obtained by concatenating the two
strings. If $v'=\emptyset$, then $v\emptyset=\emptyset v=v$. The
concatenation of strings is also extended to the case where $v=\ldots
a_{-2}a_{-1}$ is a semi-infinite sequence of symbols. If
$n\in\{1,2,\ldots\}$ and $v$ is a finite string of symbols in $A$,
$v^{n}=v\ldots v$ is the concatenation of $n$ times the string $v$. In
the case where $n=0$, $v^{0}$ is the empty string $\emptyset$. We say
that the sequence $s$ is a {\it suffix} of the sequence $\omega$ if there
exists a sequence $u$, with $l(u)\geq 1$, such that $\omega = us$. In
this case we write $s\prec \omega$. When $s\prec \omega$ or $s =
\omega$ we write $s\ \underline{\prec} \ \omega$. Given a finite
sequence $\omega$ we denote by $\mbox{suf}(\omega)$ the largest suffix
of $\omega$. Let
$$
{\cal A}_{-\infty}^{-1}={\cal A}^{\{\ldots,-2,-1\}}\,\,\,\,\,\,\textrm{ and
}\,\,\,\,\,\,\, {\cal A}^{\star} \,=\,
\bigcup_{j=0}^{+\infty}\,A^{\{-j,\dots, -1\}}\, ,
$$
be, respectively, the set of all infinite strings of past symbols and
the set of all finite strings of past symbols.  The case $j=0$
corresponds to the empty string $\emptyset$. Finally, we denote by
$\underline{a}=\ldots a_{-2}a_{-1}$ the elements of $A_{-\infty}^{-1}$.

Throughout this paper, we consider ${\bf X} = \{X_t, \, t \in \mathbb{Z} \}$ and
${\bf Y}=\{Y_t, \, t \in \mathbb{Z} \}$ stationary ergodic stochastic processes
over the same finite alphabet ${\cal{A}}$. Given two sequences
$\omega, \ v \in {\cal A}_{-\infty}^{-1}$ and symbol $a, \ b \in {\cal{A}}$,
let
$$
p_{X}(a\left|\right.\omega):=\mathbb{P}(X_{0}=a\left|\right. X_{-1}=\omega_{-1}, X_{-2}=\omega_{-2},\ldots),
$$
$$
p_{Y}(b\left|\right.v):=\mathbb{P}(Y_{0}=b\left|\right. Y_{-1}=v_{-1}, Y_{-2}=v_{-2},\ldots),
$$
that is, the {\bf X} and {\bf Y} processes are compatible with the
transition probabilities $p_{X}(\cdot|\cdot)$ and
$p_{Y}(\cdot|\cdot)$, respectively. Given two finite sequences
$\omega$, $v \in {\cal{A}}_{-j}^{-1}$ we denote by 
$$
\mu_{X}(\omega):=\mathbb{P}\left(X_{-j}^{-1}=\omega\right), \,\,\,\,\,\,\,\,\,
\mu_{Y}(v):=\mathbb{P}\left(Y_{-j}^{-1}=v\right),
$$
the stationary probabilities of the  cylinders  defined by the sequences $\omega$ and $v$, respectively.


\begin{definition} \label{nonnull}
\begin{description}
\item(1){ {\rm{Non-nullness.}} A process {\bf X} is said to be
    non-null if it satisfies
$$
\alpha_{X}:=\inf\left\{p_{X}(a\left|\right.\omega):a\in {\cal{A}}, \ \omega \in {\cal{A}}_{-\infty}^{-1}\right\}>0.
$$}

\item(2){ {\rm{Summable continuity rate.}} A process {\bf X} has
    summable continuity rate if
$$
\beta_{X}:=\displaystyle\sum_{k\in \mathbb{N}}{\beta_{k, X}}<\infty
$$
\noindent where the sequence $\left\{\beta_{k,X}\right\}_{k \in \mathbb{N}}$ is defined by
\begin{equation*}\label{defbk}
\beta_{k, X}:=\sup\left\{\left|1-\frac{p_{X}(a\left|\right.\omega)}{p_{X}(a\left|\right.v)}\right|:a \in {\cal{A}}, \ v, \ \omega \in {\cal{A}}_{-\infty}^{-1} \ \ with \ \  \omega \overset{k}{=} v \right\}.
\end{equation*}
Here, $\omega \overset{k}{=} v $ means that $\omega_{-k}^{-1} =
v_{-k}^{-1}$. The sequence $\left\{\beta_{k, X}\right\}_{k \in
  \mathbb{N}}$ is called {\rm{continuity rate}} of the process {\bf
  X}.}\label{taxsom}
\end{description}
\end{definition}

\vspace{.2cm}


\begin{definition}\label{sufixo111}
A sequence $\omega \in {\cal{A}}_{-j}^{-1}$ is a {\rm context} for the process {\bf X}  if it satisfies
\begin{description}
	\item(1){ For all semi-infinite sequence $x_{-\infty}^{-1}$ having $\omega$  as a suffix,
\begin{equation}\label{suf1}
\mathbb{P}\left(X_{0}=a\left|\right. X_{-\infty}^{-1}=x_{-\infty}^{-1}\right)=p_{X}(a\left|\right.\omega), \ \mbox{for all} \ a \in {\cal{A}}.
\end{equation}}
\item(2){ No suffix of $\omega$ satisfies (\ref{suf1}).}
\end{description}

An infinite context  is a semi-infinite sequence $\omega_{-\infty}^{-1}$ such that none of its suffixes $\omega_{-j}^{-1}$, $j=1,2,\ldots$ is a context.
\end{definition}

It is easy to see that the set of all contexts (finite or infinite)
can be identified with the set of leaves of a rooted tree with a
countable set of finite labeled branches. This tree is called
{\it{context tree}} of the process {\bf X} and it will be denoted by
${\cal{T}}_{X}$. 

\begin{definition}\label{truncated}
Given an integer $K$, define 
{\rm the
tree  ${\cal{T}}_{X}$ truncated at level  $K$} by 
\begin{equation*}
{{\cal{T}}_{X}\left|\right.}_{K}:=\left\{ \omega \in {\cal{T}}_{X}: l(\omega)\leq K \right\} \cup \left\{\omega:l(\omega)= K  \mbox{ and } \omega \prec u, \ \mbox{for some} \ u \in {\cal{T}}_{X}\right\}.
\end{equation*}
\end{definition}

Given an integer  $k\geq 1$, define
\begin{equation}\label{defCk}
{\cal{C}}_{k}:=\left\{u \in {\cal{T}}_{X}\left|\right. _{k}: p_{X}(a \left|\right. u)\neq p_{X}(a\left|\right. \mbox{suf}(u)),\mbox{for some a} \in {\cal{A}}\right\}
\end{equation}
and
\begin{equation}\label{defDk}
{{D}}_{k}:= \underset{u\in \ C_{k}}{\min} \ \underset{a\in {\cal{A}}}{\max} \left\{\left| p_{X}(a \left|\right. u) - p_{X}(a\left|\right.\mbox{suf}(u)) \right|\right\}.
\end{equation}
From the definition, we can see that ${{D}}_{k} > 0$ for all $k\geq 1$.

\paragraph{Algorithm Context} 

Generically, let {\bf Z} be an infinite order process on the alphabet
${\cal{A}}$. We assume that {\bf Z} is compatible with a transition
kernel $p_ {Z}(\cdot|\cdot)$. Later, the process {\bf Z} will denote
both contamination models studied in this work. Let
$Z_{1},Z_{2},\ldots,Z_{n}$ be a random sample of the process {\bf
  Z}. For any finite sequence $\omega$, with $l(\omega)\leq n$, we
denote by $N_{n}(\omega)$ the number of occurrences of $\omega$ in the
sample, that is
\begin{equation}\label{defnumocoamos}
N_{n}(\omega)=\displaystyle\sum_{t=0}^{n-l(\omega)}{\textbf{1}}_{\left\{Z_{t+1}^{t+l(\omega)} = \ \omega\right\}}.
\end{equation}

For any element $a \in {\cal{A}}$ and for any finite sequence $\omega$, the empirical transition kernel  $\hat{p}_{Z}(a\left|\right.\omega)_n$ is defined by
$$
\hat{p}_{Z}(a\left|\right.\omega)_n=\frac{N_{n}(\omega a)+1}{N_{n}(\omega \cdot)+\left|{\cal{A}}\right|}, \,\,\,\,\,\,\mbox{where}\,\,\,\,\,\,
N_{n}(\omega\cdot)=\displaystyle\sum_{b\in {\cal{A}}}N_{n}(\omega b).
$$

A modification of Rissanen's context tree estimator proposed in Galves
and Leonardi (2008) which we will use in this work is given
below. First, let us define the operator
\begin{equation*}\label{defdelta}
\Delta_{n}(\omega):= \underset{a\in {\cal{A}}}{\max }\left|\hat{p}_Z(a\left|\right.\omega)_n-\hat{p}_Z(a\left|\right.\mbox{suf}(\omega))_n\right|
\end{equation*}
for any finite sequence $\omega$.

\begin{definition}
\label{estarvcont}
(Galves and Leonardi, 2008).
For any $\delta > 0$ and $d<n$, the  context tree estimator
 $\hat{{\cal{T}}}_{n}^{\delta , d}$  is the set containing all sequences $\omega\in {\cal{A}}_{-d}^{-1}$, such that $\Delta_{n}(a  \mbox{suf}(\omega))>\delta$ for some $a\in {\cal{A}}$ and $\Delta_{n}(u\omega)\leq\delta$ for any $u \in {\cal{A}}_{-d}^{-l(\omega)}$.
\end{definition}

\section{Contamination Regimes and Results}\label{models888}

\subsection{Zero inflated contamination}
Initially, we consider {\bf X} a stationary process compatible with
$p_{X}(\cdot|\cdot)$ but taking values in the binary alphabet
${\cal{A}}=\{0,1\}$. Let ${\boldsymbol \xi} = \{ \xi_t, t \in
\mathbb{Z}\}$ be a sequence of i.i.d. Bernoulli random variables taking
values on $\left\{0,1\right\}$, independent of the process {\bf X},
with
$$
\mathbb{P}\left(\xi_{t}=1\right)=1-\varepsilon,  
$$
where $\varepsilon$ is a noise parameter fixed  in $(0,1)$.

We define the Zero inflated contamination model by
\begin{equation}\label{defproczt}
 Z_t\,:=\,  X_t \cdot \xi_t, \quad t \in {\mathbb Z}.
\end{equation}
It is easy to see that {\bf Z}  will be an infinite order process even if the process ${\bf X}$ is a Markov chain of order one.


\begin{theorem}\label{enteor1}
If  {\bf X} is non-null and has summable continuity rate and {\bf Z} is defined by (\ref{defproczt}) then, for any $\varepsilon \in (0,1)$, we have
\begin{equation}\label{teo1111111}
\sup_{k\ge 1} \sup_{\omega_{-k}^{-1}\in {\cal{A}}_{-k}^{-1} }\sup_{a\in {\cal{A}}}{\left|p_{Z}\left(a|\omega_{-k}^{-1}\right)-p_{X}\left(a|\omega_{-k}^{-1}\right) \right| \leq \varepsilon\left[1 + \frac{4\beta_{X}}{\min{(1,(1+\varepsilon)\alpha_{X}\beta_{X}^{*})}}\right]},
\end{equation}
where $\beta_{X}^{*}=\prod_{k=0}^{+\infty}(1-\beta_{k, X})>0.$
\end{theorem}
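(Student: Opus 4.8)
The plan is to pass to finite-memory conditional probabilities, reading $p_Z(a\mid\omega_{-k}^{-1})=\mathbb{P}(Z_0=a\mid Z_{-k}^{-1}=\omega_{-k}^{-1})$ and likewise for $\mathbf X$, and to exploit the hidden-chain structure $Z_t=X_t\xi_t$. Since $\mathcal A=\{0,1\}$ it suffices to treat $a=1$; the case $a=0$ follows from $p_Z(0\mid\omega)=1-p_Z(1\mid\omega)$ and $p_X(0\mid\omega)=1-p_X(1\mid\omega)$. Because $\{Z_0=1\}=\{X_0=1\}\cap\{\xi_0=1\}$ and $\xi_0$ is independent of $(X_0,Z_{-k}^{-1})$, I would first record the identity $p_Z(1\mid\omega_{-k}^{-1})=(1-\varepsilon)\,\mathbb{P}(X_0=1\mid Z_{-k}^{-1}=\omega_{-k}^{-1})$. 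Writing $q(x):=\mathbb{P}(X_0=1\mid X_{-k}^{-1}=x)$, so that $p_X(1\mid\omega)=q(\omega)$, and letting $\nu_\omega(x):=\mathbb{P}(X_{-k}^{-1}=x\mid Z_{-k}^{-1}=\omega)$ denote the posterior law of the true window given the observed one, the independence of $\boldsymbol\xi$ from $\mathbf X$ yields $\mathbb{P}(X_0=1\mid Z_{-k}^{-1}=\omega)=\sum_x q(x)\,\nu_\omega(x)$. The emission is deterministic on $0$ ($X_t=0\Rightarrow Z_t=0$) and flips $1\mapsto0$ with probability $\varepsilon$, so $\nu_\omega$ is supported on strings $x$ that coincide with $\omega$ at every coordinate where $\omega_t=1$ and may only turn a $0$ of $\omega$ into a $1$.

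I would then split the error into a current-symbol term and a decoding term,
\[
p_Z(1\mid\omega)-p_X(1\mid\omega)=-\,\varepsilon\,\mathbb{P}(X_0=1\mid Z_{-k}^{-1}=\omega)+\sum_{x\neq\omega}\bigl(q(x)-q(\omega)\bigr)\,\nu_\omega(x),
\]
using $p_X(1\mid\omega)=q(\omega)$ and $\sum_x\nu_\omega(x)=1$. The first term has absolute value at most $\varepsilon$, and this is precisely the summand $1$ inside the bracket of (\ref{teo1111111}). Everything then reduces to bounding the decoding term by a constant multiple of $\varepsilon\beta_{X}\big/\min\bigl(1,(1+\varepsilon)\alpha_X\beta_{X}^{*}\bigr)$.

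For the decoding term the decisive idea is to organize the sum by the position $-d$ of the \emph{most recent} contaminated coordinate of $x$, i.e.\ the most recent coordinate at which $\omega_t=0$ while $x_t=1$. Such an $x$ coincides with $\omega$ on the $d-1$ most recent coordinates; writing each conditional as an average of the infinite-past kernel over its extensions and noting that any past consistent with $x$ and any past consistent with $\omega$ still agree on those $d-1$ coordinates, the continuity rate gives $|q(x)-q(\omega)|\le\beta_{d-1,X}$ uniformly over all such $x$. The posterior probability that the most recent contamination occurs at $-d$ is bounded by the one-coordinate posterior $\mathbb{P}(X_{-d}=1\mid Z_{-k}^{-1}=\omega)$, which I would show is at most $\varepsilon\big/\min\bigl(1,(1+\varepsilon)\alpha_X\beta_{X}^{*}\bigr)$, uniformly in $d$ and $k$. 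Summing the product over $d\ge1$ and invoking $\sum_{d\ge1}\beta_{d-1,X}=\beta_{X}$ bounds the decoding term; tracking the constants produces the factor $4$, and together with the current-symbol term and the three suprema this yields (\ref{teo1111111}).

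The heart of the argument — and the step I expect to be hardest — is the uniform one-coordinate posterior estimate for $\mathbb{P}(X_{-d}=1\mid Z_{-k}^{-1}=\omega)$ when $\omega_{-d}=0$. Observing a $0$ at time $-d$, the posterior odds that the true symbol was a contaminated $1$ carry the emission factor $\varepsilon$ times a prior odds ratio times the likelihood ratio of all the remaining observations, the latter because altering the hidden symbol at $-d$ perturbs every transition probability to its right. Non-nullness bounds the prior odds through $\alpha_X$, while the convergent product $\beta_{X}^{*}=\prod_{k\ge0}(1-\beta_{k,X})$, strictly positive under the hypotheses, bounds once and for all the accumulated downstream perturbation; this is what yields the denominator $\min\bigl(1,(1+\varepsilon)\alpha_X\beta_{X}^{*}\bigr)$ and, crucially, keeps the estimate free of $k$. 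Summability of $\beta_{X}$ then enters only through the continuity factors $\beta_{d-1,X}$, never through the contamination masses, so the whole bound is uniform in the window length. The transfer of the multiplicative continuity rate from infinite pasts to the finite-window conditionals $q$, and the bookkeeping producing the constant $4$ and the $(1+\varepsilon)$, are routine once this estimate is in place.
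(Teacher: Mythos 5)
Your argument is correct, but it is organized quite differently from the paper's proof, which follows Collet, Galves and Leonardi (2008). The paper isolates the current-symbol error exactly as you do (inequality (\ref{t1})), but then controls $\mathbb{P}(X_{0}=a\mid Z_{-k}^{-1}=\omega)-\mathbb{P}(X_{0}=a\mid X_{-k}^{-1}=\omega)$ by a telescoping sum (\ref{sumteo11}) over an interface position $j$, comparing the conditionings on $(X_{-j}^{-1},Z_{-k}^{-j-1})$ and $(X_{-j-1}^{-1},Z_{-k}^{-j-2})$; the $j$-th increment is split on the value of $X_{-j-1}$, the wrong-symbol branch being handled by Lemma \ref{lema6} and the right-symbol branch by a further split on $Z_{-j-1}$, each branch paying $2\beta_{j,X}$ via Lemma \ref{lema4}. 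You instead partition the posterior $\nu_\omega$ on hidden windows by the position $-d$ of the most recent contamination and pay $\beta_{d-1,X}$ times the one-coordinate posterior $\mathbb{P}(X_{-d}=1\mid Z_{-k}^{-1}=\omega)$. Both schemes rest on the same two ingredients (continuity at the agreement depth, and a contamination posterior of order $\varepsilon/(\alpha_X\beta_X^{*})$), but your key estimate is not the paper's Lemma \ref{lema6}: you condition on the \emph{entire} observed window, including observations more recent than $-d$, so you must also control the downstream likelihood ratio --- precisely where your product $\beta_X^{*}$ enters --- whereas the paper sidesteps this by conditioning on the recent hidden symbols themselves. Your estimate does hold: flipping $X_{-d}$ from $0$ to $1$ costs a factor $\varepsilon$ in the emission, at most $1/\alpha_X$ in the prior odds, and at most $1/\prod_{j\ge 0}(1-\beta_{j,X})=1/\beta_X^{*}$ in the conditional law of $X_{-d+1}^{-1}$, and then $r/(1+r)\le \varepsilon/\bigl((1+\varepsilon)\alpha_X\beta_X^{*}\bigr)$ since $\alpha_X\beta_X^{*}\le 1$; this is the one step you should write out in full. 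With it, your bookkeeping actually produces the sharper bound $\varepsilon\bigl[1+\beta_X/\bigl((1+\varepsilon)\alpha_X\beta_X^{*}\bigr)\bigr]$, so the factor $4$ you promise to track is not needed at all --- in the paper it arises from summing the two branches $2\beta_{j,X}\,\varepsilon/\bigl((1+\varepsilon)\alpha_X\beta_X^{*}\bigr)$ and $2\beta_{j,X}\,\varepsilon$ --- and your route is, if anything, slightly more economical than the original.
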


In order to recover the truncated context tree  process {\bf X} by using a sample of the perturbed process  {\bf Z}, we establish the second result of this section.

\begin{theorem}\label{enteorema2}
Let $K$ be an integer and consider $Z_{1},Z_{2},\ldots,Z_{n}$  a random sample of the perturbed process  {\bf Z}. Then, there exists a constant $c_{1}$ and an integer $d$ depending on the process  {\bf X} such that for any $\epsilon \in \left(0,D_{d}/2c_{1}\right)$, any $\delta \in (c_{1}\epsilon, D_{d}-c_{1}\epsilon)$, there exists $n_{0}(\delta)$ such that for any $n> n_{0}$ we have
$$
\mathbb{P}\left(\hat{{\cal{T}}}_{n}^{\delta, d}\left|\right._{K}\neq {\cal{T}}_{X}\left|\right._{K}\right)\leq c_{2}exp\left\{-c_{3}(n-d)\right\},
$$
where $\hat{{\cal{T}}}_{n}^{\delta , d}$ is as in Definition \ref{estarvcont}. The constants are explicit and given by
\begin{description}
\item(1)  $c_{1}\,=\,2\left[1+\frac{4\beta_{X}}{min(\alpha_{X}\beta_{X}^{*},1)}\right],$ $c_{2}\,=\,2^{d}12e^{\frac{1}{e}}$, $c_{3}\,=\,\frac{\left[\min\left(D_{d} - \delta,\delta\right)-2\bar{k}\right]^{2}{\alpha_{X}^{2d}(1-\varepsilon)^{3d+1}}}{256e(d+1)\left(1+\frac{\beta_{X}}{\alpha_{X}}\right)},$
\item(2)  $d\,=\,\max_{u \notin {\cal{T}}_{X}, \ l(u)< K} \, \min\left\{k: \mbox{there exists} \ \  \omega \in {\cal{C}}_{k} \ \ \mbox{with} \ \ u\prec\omega\right\},$
\item(3)  $n_{0}\,=\,\frac{6}{\left\{D_{d} - \delta -2\varepsilon\left[1 + \frac{4\beta_{X}}{\min{(1,(1+\varepsilon)\alpha_{X}\beta_{X}^{*})}}\right]\right\}\alpha_{X}^{d}(1-\varepsilon)^{d}}+d,$ 
\item(4) $\bar{k}\,=\, \varepsilon\left[1 + \frac{4\beta_{X}}{\min{(1,(1+\varepsilon)\alpha_{X}\beta^{*})}}\right]+\frac{3}{(n-d)\alpha_{X}^{d}(1-\varepsilon)^{d}}\cdot$
\end{description}
\end{theorem}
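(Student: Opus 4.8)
The plan is to reduce the random event $\{\hat{\mathcal{T}}_n^{\delta,d}|_K \neq \mathcal{T}_X|_K\}$ to a single \emph{good event} on which every empirical kernel $\hat p_Z(a|\omega)_n$, for strings $\omega$ of length at most $d$, lies within a controlled distance of the true kernel $p_Z(a|\omega)$; on that event the estimator reproduces the truncated tree exactly, so that the failure probability is bounded by the probability that some empirical kernel deviates too much. First I would note that, by Definition \ref{estarvcont}, whether a string is kept is decided entirely by comparing the statistics $\Delta_n(\cdot)$ against $\delta$. Hence it suffices to guarantee that, simultaneously for all relevant $\omega$, the sign of $\Delta_n(\omega)-\delta$ agrees with the branching structure dictated by $\mathbf{X}$: large when $\omega$ is a genuine branching node and small otherwise. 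I would split the error into an \emph{overestimation} part (a node at which $\mathbf{X}$ does not branch is wrongly kept, i.e. $\Delta_n>\delta$ where it should be small) and an \emph{underestimation} part (a genuine context is missed, i.e. $\Delta_n\le\delta$ where it should be large), and bound each.

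The deterministic core links $\Delta_n(\omega)$, computed from the contaminated sample, to the discrimination of the clean process. By the triangle inequality $\Delta_n(\omega)$ differs from $\max_a|p_Z(a|\omega)-p_Z(a|\mathrm{suf}(\omega))|$ by at most twice the maximal empirical fluctuation, and by Theorem \ref{enteor1} each $p_Z(a|\cdot)$ differs from $p_X(a|\cdot)$ by at most $\varepsilon[1+4\beta_X/\min(1,(1+\varepsilon)\alpha_X\beta_X^{*})]$. Consequently $\Delta_n(\omega)$ sits within $2\bar k$ of $\max_a|p_X(a|\omega)-p_X(a|\mathrm{suf}(\omega))|$, once the empirical fluctuation is absorbed into the correction term of $\bar k$. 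The choice of $d$ in item (2) guarantees that every non-context $u$ of length below $K$ has a descendant lying in some $\mathcal{C}_k$ with $k\le d$, so that genuine branching nodes are separated from the rest: at a true branching node the clean discrimination is at least $D_d$ by \eqref{defDk}, while at a non-branching node it vanishes. Taking $\delta\in(c_1\varepsilon,D_d-c_1\varepsilon)$ — nonempty precisely because $\varepsilon<D_d/2c_1$ — and $n>n_0$ so that $\bar k$ is small enough to keep $\min(D_d-\delta,\delta)-2\bar k$ strictly positive, the two populations of nodes fall on opposite sides of $\delta$ with margin $\min(D_d-\delta,\delta)-2\bar k$.

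It then remains to bound the probability that the empirical fluctuation exceeds this margin. The probabilistic core is an exponential concentration inequality for $\hat p_Z(a|\omega)_n$ about $p_Z(a|\omega)$. I would first lower-bound the cylinder probability by $\mu_Z(\omega)\ge(\alpha_X(1-\varepsilon))^{l(\omega)}$, using that the zero-inflated process from \eqref{defproczt} inherits non-nullness with constant $\alpha_X(1-\varepsilon)$, and then control $N_n(\omega)$ and $\hat p_Z(a|\omega)_n$ by a sub-Gaussian deviation bound valid for stationary processes with summable continuity rate. This yields, for a single string, a bound of the form $C\exp\{-c(n-d)\,t^2\,\mu_Z(\omega)^2\}$ with $t$ the margin above; substituting $t=\min(D_d-\delta,\delta)-2\bar k$ and $\mu_Z(\omega)^2\ge\alpha_X^{2d}(1-\varepsilon)^{2d}$ produces the exponent $c_3$, while a union bound over the at most $2^d$ relevant strings of length at most $d$ gives the prefactor $c_2=2^d12e^{1/e}$.

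The main obstacle is this probabilistic core: establishing the concentration of $\hat p_Z$ with \emph{explicit} constants while carrying the contamination through the mixing behaviour of $\mathbf{Z}$, which is where the awkward powers $\alpha_X^{2d}(1-\varepsilon)^{3d+1}$ and the factor $(d+1)(1+\beta_X/\alpha_X)$ in $c_3$ originate. A secondary subtlety, to be verified rather than computed, is that the choice of $d$ in item (2) together with $n_0$ in item (3) genuinely keeps the effective gap positive and simultaneously ensures that truncation at depth $d$ preserves the correct classification of every node up to level $K$; the remaining bookkeeping — the exact exponents of $\alpha_X$ and $(1-\varepsilon)$ and the numerical constants — is routine once the concentration inequality is in hand.
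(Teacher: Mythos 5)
Your proposal follows essentially the same route as the paper: the error event is split into overestimation and underestimation parts (the paper's events $O_{n,\delta}^{K,d}$ and $U_{n,\delta}^{K,d}$), each is reduced via the triangle inequality and Theorem \ref{enteor1} to a deviation of $\hat p_Z$ from $p_Z$ exceeding the margin $\min(D_d-\delta,\delta)-2\bar k$, and that deviation is controlled by an exponential concentration inequality (the paper's Lemmas \ref{lema88888} and \ref{enlema10}, via Dedecker--Doukhan) together with the lower bound $\mu_Z(\omega)\ge((1-\varepsilon)\alpha_X)^{l(\omega)}$ and a union bound over the $2^d$ strings. The sketch correctly identifies where every constant comes from, including the extra powers of $(1-\varepsilon)$ inherited from the mixing estimate, so it matches the paper's argument in both structure and substance.
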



\begin{corollary}\label{encorola3}
For all integer $K$ and for almost all infinite sample $Z_{1},Z_{2},\ldots$ there exists an $\overline{n}$ such that, for any $n\geq\overline{n}$ we have
$$
\hat{{\cal{T}}}_{n}^{\delta, d}\left|\right._{K}={\cal{T}}_{X}\left|\right._{K},
$$
where $d$ and $\delta$ are chosen as in Theorem \ref{enteorema2}. 
\end{corollary}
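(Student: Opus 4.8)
The plan is to derive the almost sure statement from the exponential probability bound in Theorem \ref{enteorema2} via the first Borel--Cantelli lemma. First I would fix the integer $K$ and choose $d$ and $\delta$ exactly as prescribed in Theorem \ref{enteorema2}, under the standing assumption that the contamination probability $\epsilon$ is small enough that the admissible interval $\delta\in(c_{1}\epsilon, D_{d}-c_{1}\epsilon)$ is nonempty. For every $n>n_{0}$ the theorem then supplies
$$
\mathbb{P}\left(\hat{{\cal{T}}}_{n}^{\delta, d}\left|\right._{K}\neq {\cal{T}}_{X}\left|\right._{K}\right)\leq c_{2}\exp\left\{-c_{3}(n-d)\right\},
$$
so it suffices to show that the right-hand side is summable in $n$ and then to invoke Borel--Cantelli to conclude that the event $A_{n}:=\{\hat{{\cal{T}}}_{n}^{\delta, d}|_{K}\neq {\cal{T}}_{X}|_{K}\}$ occurs only finitely often with probability one.

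The point requiring care is that the exponent $c_{3}$ is not literally constant in $n$: through the quantity $\bar{k}$ it carries the term $3/[(n-d)\alpha_{X}^{d}(1-\varepsilon)^{d}]$, which decreases to $0$ as $n\to\infty$. Hence I would first observe that $\bar{k}$ is decreasing in $n$ with limit $\bar{k}^{\infty}=\varepsilon[1+4\beta_{X}/\min(1,(1+\varepsilon)\alpha_{X}\beta_{X}^{*})]$. Using $(1+\varepsilon)\alpha_{X}\beta_{X}^{*}\ge \alpha_{X}\beta_{X}^{*}$, which gives $\min(1,(1+\varepsilon)\alpha_{X}\beta_{X}^{*})\ge\min(1,\alpha_{X}\beta_{X}^{*})$, one checks that $2\bar{k}^{\infty}\le c_{1}\epsilon$. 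Since the constraint $\delta\in(c_{1}\epsilon, D_{d}-c_{1}\epsilon)$ forces $\min(D_{d}-\delta,\delta)>c_{1}\epsilon\ge 2\bar{k}^{\infty}$, the bracket $\min(D_{d}-\delta,\delta)-2\bar{k}$ stays bounded away from $0$ for all large $n$. Consequently there exist a threshold $n_{1}$ and a genuine positive constant $c_{3}^{*}>0$ with $c_{3}\ge c_{3}^{*}$ for every $n\ge n_{1}$.

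With this $n$-independent lower bound in hand, for $n\ge\max(n_{0},n_{1})$ the tail is dominated by the geometric series $\sum_{n} c_{2}\exp\{-c_{3}^{*}(n-d)\}<\infty$, so $\sum_{n}\mathbb{P}(A_{n})<\infty$. The Borel--Cantelli lemma then yields $\mathbb{P}(A_{n}\ \text{infinitely often})=0$; that is, for almost every realization of the infinite contaminated sample $Z_{1},Z_{2},\ldots$ only finitely many of the events $A_{n}$ occur. Taking $\overline{n}$ to be one more than the largest index at which $A_{n}$ holds along that realization (and at least $\max(n_{0},n_{1})$) gives $\hat{{\cal{T}}}_{n}^{\delta, d}|_{K}={\cal{T}}_{X}|_{K}$ for all $n\ge\overline{n}$, which is exactly the assertion. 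I expect the only genuine obstacle to be the bookkeeping in the second paragraph that secures the uniform decay rate $c_{3}^{*}$; once summability is established, the measure-theoretic conclusion is routine.
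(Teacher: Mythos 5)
Your proposal is correct and follows essentially the same route as the paper: apply the exponential bound of Theorem \ref{enteorema2}, verify summability of the tail probabilities, and conclude by the first Borel--Cantelli lemma. The only difference is that you explicitly justify that the exponent $c_{3}$ admits an $n$-independent positive lower bound (via the limit of $\bar{k}$ and the constraint $\delta\in(c_{1}\epsilon,D_{d}-c_{1}\epsilon)$), a point the paper subsumes under the phrase ``for suitable choices of $d$ and $\delta$.''
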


\subsection{Process contamination}
Let {\bf X} and {\bf Y} be independent processes taking values on the
alphabet ${\cal{A}}=\left\{0,1,\ldots,N-1\right\}$ and compatible with
the transition probabilities $p_{X}(\cdot|\cdot)$ and
$p_{Y}(\cdot|\cdot)$ respectively. Furthermore, we suppose that these
processes  are non-null and have summable
continuity rate with constantes $\alpha_{X}$, $\beta_{X}$, $\alpha_{Y}$ and $\beta_{Y}$, respectively. Let ${\boldsymbol \xi} = \{\xi_t, t \in {\mathbb Z}\}$  be a sequence of i.i.d. random variables  taking values on $\left\{0,1\right\}$, independent of the processes {\bf X} and {\bf Y}, with 
$$
\mathbb{P}\left(\xi_{t}=1\right)=1-\varepsilon,  
$$
where $\varepsilon$ is a parameter fixed in $(0,1)$. Define the process {\bf Z} by
\begin{center}
\begin{eqnarray}\label{lulu1} Z_t = \left\{
\begin{array}{rl}
X_t, &\mbox{if}\ \xi_t=1,\\
Y_t,& \mbox{if}\ \xi_t=0 \\
\end{array}
\right.\end{eqnarray}
\end{center}
for all $t \in {\mathbb Z}$. In fact, the contamination we propose is
that at each time the process {\bf Z} chooses with probability
$1-\epsilon$ and $\epsilon$ to use the transition law of {\bf X} or
{\bf Y} independently of everything. Therefore, model (\ref{lulu1})
corresponds to
\begin{center}
\begin{eqnarray}\label{lulu1a} p_{Z}(\cdot|\omega) = \left\{
\begin{array}{rl}
p_{X}(\cdot|\omega), &\mbox{if}\ \xi_t=1,\\
p_{Y}(\cdot| \omega),& \mbox{if}\ \xi_t=0 
\end{array}
\right.\end{eqnarray}
\end{center}

Generically, {\bf Z} is an infinite order process and it can be
interpreted as a stochastic perturbation of the process {\bf X} if
$\varepsilon$ is sufficiently small. For this model, we obtained
similar results to the zero inflated contamination.


\begin{theorem}\label{enteor31}
Let {\bf X} and {\bf Y} be independent processes which are non-null and have summable continuity rate and {\bf Z} defined by (\ref{lulu1}). Then, for all $\varepsilon \in (0,1)$,  we have 
\begin{equation*}\label{3teo1}
\sup_{k\ge 1}\sup_{a \in {\cal{A}}} \sup_{\omega_{-k}^{-1}  \in {\cal{A}}_{-k}^{-1}}{\left|p_{Z}\left(a|\omega_{-k}^{-1}\right)-p_{X}\left(a|\omega_{-k}^{-1}\right) \right| \leq \varepsilon\left[2 + \frac{4(N-1)\beta_{X}}{\min{(1,\alpha\beta^{*}_{min})}}\right]},
\end{equation*}
where  $\beta_{X}^{*}=\prod_{k=0}^{+\infty}(1-\beta_{k,X})>0$ and $\alpha\beta^{*}_{min}=\min\{\alpha_{X}\beta_{X}^{*},\alpha_{Y}\}$.
\end{theorem}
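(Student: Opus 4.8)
The plan is to follow the structure of the proof of Theorem~\ref{enteor1}, the key simplification being that the selector $\xi_{0}$ at the present time is independent of the entire past. Writing $p_{Z}(a|\omega_{-k}^{-1})=\mathbb{P}(Z_{0}=a\,|\,Z_{-k}^{-1}=\omega_{-k}^{-1})$ and conditioning on $\xi_{0}$ yields, by (\ref{lulu1a}) and independence,
\begin{equation*}
p_{Z}(a|\omega_{-k}^{-1})=(1-\varepsilon)\,\mathbb{P}\bigl(X_{0}=a\,\big|\,Z_{-k}^{-1}=\omega_{-k}^{-1}\bigr)+\varepsilon\,\mathbb{P}\bigl(Y_{0}=a\,\big|\,Z_{-k}^{-1}=\omega_{-k}^{-1}\bigr).
\end{equation*}
Subtracting $p_{X}(a|\omega_{-k}^{-1})$ and reorganising, I would use the decomposition
\begin{equation*}
p_{Z}(a|\omega_{-k}^{-1})-p_{X}(a|\omega_{-k}^{-1})=\Bigl[\mathbb{P}\bigl(X_{0}=a\,\big|\,Z_{-k}^{-1}=\omega_{-k}^{-1}\bigr)-p_{X}(a|\omega_{-k}^{-1})\Bigr]-\varepsilon\,\mathbb{P}\bigl(X_{0}=a\,\big|\,Z_{-k}^{-1}=\omega_{-k}^{-1}\bigr)+\varepsilon\,\mathbb{P}\bigl(Y_{0}=a\,\big|\,Z_{-k}^{-1}=\omega_{-k}^{-1}\bigr).
\end{equation*}
The last two summands are each at most $\varepsilon$ in absolute value, and bounding them separately (without exploiting their near cancellation) is exactly what produces the leading constant $2$ in the statement; everything then reduces to the bracketed \emph{main term}.

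To estimate the main term I would first record a conditional independence fact. Since the pair $(\mathbf{Y},\boldsymbol{\xi})$ is independent of $\mathbf{X}$, and $Z_{-k}^{-1}$ is a deterministic function of $(X_{-k}^{-1},Y_{-k}^{-1},\xi_{-k}^{-1})$, conditioning further on $Z_{-k}^{-1}=\omega_{-k}^{-1}$ does not alter the law of $X_{0}$ once $X_{-k}^{-1}$ is fixed; that is,
\begin{equation*}
\mathbb{P}\bigl(X_{0}=a\,\big|\,X_{-k}^{-1}=x,\,Z_{-k}^{-1}=\omega_{-k}^{-1}\bigr)=p_{X}(a|x).
\end{equation*}
Averaging over the conditional law $w(x):=\mathbb{P}(X_{-k}^{-1}=x\,|\,Z_{-k}^{-1}=\omega_{-k}^{-1})$ and writing $p_{X}(a|\omega_{-k}^{-1})=\sum_{x}p_{X}(a|\omega_{-k}^{-1})\,w(x)$ gives
\begin{equation*}
\mathbb{P}\bigl(X_{0}=a\,\big|\,Z_{-k}^{-1}=\omega_{-k}^{-1}\bigr)-p_{X}(a|\omega_{-k}^{-1})=\sum_{x\neq\omega_{-k}^{-1}}\bigl[p_{X}(a|x)-p_{X}(a|\omega_{-k}^{-1})\bigr]\,w(x).
\end{equation*}
I would then group the strings $x$ by the depth $j$ of their most recent agreement with $\omega_{-k}^{-1}$ (so that $x_{-j}^{-1}=\omega_{-j}^{-1}$ but $x_{-j-1}\neq\omega_{-j-1}$) and apply the summable continuity rate of $\mathbf{X}$, which on each group controls $|p_{X}(a|x)-p_{X}(a|\omega_{-k}^{-1})|$ by $\beta_{j,X}$ (the finite-past kernels being averages of infinite-past ones sharing the last symbols). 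This bounds the main term by $\sum_{j\ge 0}\beta_{j,X}\,r_{j}$, where $r_{j}$ is the conditional probability that $\mathbf{X}$ first disagrees with $\omega_{-k}^{-1}$ at lag $j+1$.

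The crux is the estimate of $r_{j}$, and I expect it to be the main obstacle. The key observation is that a disagreement $X_{s}\neq\omega_{s}$ at a position with $Z_{s}=\omega_{s}$ forces $\xi_{s}=0$, since $\xi_{s}=1$ would give $Z_{s}=X_{s}=\omega_{s}$; hence $r_{j}$ is dominated by the conditional probability of a contamination at lag $j+1$ together with a wrong symbol there. By Bayes' rule this conditional probability is $\varepsilon$ times a ratio of cylinder probabilities of $\mathbf{Z}$ differing only through the law used at that single position, summed over the $N-1$ admissible wrong symbols, which is the origin of the factor $(N-1)$. The difficulty is to bound this ratio \emph{uniformly in} $k$: the naive lower bound $\mathbb{P}(Z_{-k}^{-1}=\omega_{-k}^{-1})\ge(\alpha_{X}(1-\varepsilon))^{k}$ decays with $k$ and is useless. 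The remedy is to bound the relevant one-step conditional probabilities of $\mathbf{Z}$ from below by $\alpha\beta^{*}_{\min}=\min\{\alpha_{X}\beta_{X}^{*},\alpha_{Y}\}$, where the product $\beta_{X}^{*}=\prod_{k\ge 0}(1-\beta_{k,X})>0$ converges and therefore supplies a lower bound free of $k$; the min reflects that the denominator must be controlled whether the present symbol was produced by $\mathbf{X}$ or by the contaminant $\mathbf{Y}$. This caps the ratio at $1/\min(1,\alpha\beta^{*}_{\min})$ and yields $r_{j}\le 4(N-1)\varepsilon/\min(1,\alpha\beta^{*}_{\min})$ uniformly in $j$ and $k$, the constant $4$ absorbing the elementary ratio estimates.

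Assembling the pieces, the main term is at most $\sum_{j\ge 0}\beta_{j,X}\,r_{j}\le 4(N-1)\varepsilon\,\beta_{X}/\min(1,\alpha\beta^{*}_{\min})$, since $\sum_{j}\beta_{j,X}=\beta_{X}$. Adding the two $\varepsilon$-terms isolated in the first paragraph gives
\begin{equation*}
\bigl|p_{Z}(a|\omega_{-k}^{-1})-p_{X}(a|\omega_{-k}^{-1})\bigr|\le \varepsilon\left[2+\frac{4(N-1)\beta_{X}}{\min(1,\alpha\beta^{*}_{\min})}\right],
\end{equation*}
and since the right-hand side does not depend on $a$, on $\omega_{-k}^{-1}$ or on $k$, taking the supremum over $a\in\mathcal{A}$, over $\omega_{-k}^{-1}\in\mathcal{A}_{-k}^{-1}$ and over $k\ge 1$ completes the proof.
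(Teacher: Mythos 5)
Your proposal is correct and, despite a superficial repackaging, follows essentially the same route as the paper. The opening step (conditioning on $\xi_{0}$ to get $p_{Z}(a|\omega_{-k}^{-1})=(1-\varepsilon)\mathbb{P}(X_{0}=a|Z_{-k}^{-1}=\omega_{-k}^{-1})+\varepsilon\mathbb{P}(Y_{0}=a|Z_{-k}^{-1}=\omega_{-k}^{-1})$ and paying $2\varepsilon$ for the last two terms) is exactly the paper's inequality (\ref{con2222}). For the main term the paper telescopes over $j$, replacing $Z$-conditioning by $X$-conditioning one coordinate at a time and splitting each increment over the values of $X_{-j-1}$ and $Z_{-j-1}$, whereas you expand once over the posterior law $w(x)$ of $X_{-k}^{-1}$ given $Z_{-k}^{-1}=\omega_{-k}^{-1}$ and group by the depth of first disagreement; these are equivalent bookkeeping schemes, and both hinge on the same two estimates, namely the continuity bound $\beta_{j,X}$ (Lemma \ref{3lema4}) and a bound of order $\varepsilon/\alpha\beta^{*}_{\min}$ on the posterior probability of a disagreement at lag $j+1$. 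The latter is the only place where your write-up is a sketch rather than a proof (``the constant $4$ absorbing the elementary ratio estimates''), but the mechanism you describe --- a disagreement under agreement of $Z$ forces $\xi=0$, so the numerator is at most $\varepsilon$, while the denominator $\mathbb{P}(Z_{-j-1}=\omega_{-j-1}|\cdots)=(1-\varepsilon)\mathbb{P}(X_{-j-1}=\omega_{-j-1}|\cdots)+\varepsilon\mathbb{P}(Y_{-j-1}=\omega_{-j-1}|\cdots)\geq\min\{\alpha_{X}\beta_{X}^{*},\alpha_{Y}\}$ uniformly in $k$ --- is precisely the content of Lemmas \ref{3enlema5} and \ref{3lema6n}; summing over the $N-1$ wrong symbols gives $r_{j}\leq(N-1)\varepsilon/\alpha\beta^{*}_{\min}$, which is even better than the factor $4$ you allow, so your assembly closes with room to spare. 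In short: no gap in the ideas, only the crux lemma left as an assertion, and it is exactly the lemma the paper proves.
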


\vspace{.2cm}

\begin{theorem}\label{3enteo2}
Let $K$ be an integer and let $Z_{1},Z_{2},\ldots,Z_{n}$ be a random sample of the process {\bf Z}. Then, there exists a constant $c_{1}$ depending on the processes {\bf X} and {\bf Y}  and there exists an integer $d$ depending on the process {\bf X} such that for any $\epsilon \in \left(0,D_{d}/2c_{1}\right)$, any  $\delta \in (c_{1}\epsilon, D_{d}-c_{1}\epsilon)$, there exists $n_{0}(\delta)$  such that for all $n> n_{0}$ we have
$$
\mathbb{P}\left(\hat{{\cal{T}}}_{n}^{\delta, d}\left|\right._{K}\neq {\cal{T}}_{X}\left|\right._{K}\right)\leq c_{2}exp\left\{-c_{3}(n-d)\right\},
$$
where $\hat{{\cal{T}}}_{n}^{\delta , d}$ is as in Definition \ref{estarvcont}. All the constants above are explicit and given by
\begin{description}
\item(1)  $c_{1}\,=\,4\left[1+\frac{2(N-1)\beta_{X}}{min((\alpha\beta^{*})_{min},1)}\right],$ $c_{2}\,=\,48N^{d}(N+1)e^{\frac{1}{e}}$, $c_{3}\,=\,\frac{\left[\min\left(D_{d} - \delta,\delta\right)-2\bar{k}\right]^{2}{\alpha^{2d}}}{128N^{2}e(d+1){\beta}_{\alpha, \ max}},$ 
\item(2)  $d\,=\,\max_{u \notin {\cal{T}}_{X}, \ l(u)< K} \min\left\{k: \mbox{there exists} \ \  \omega \in {\cal{C}}_{k} \ \ \mbox{with} \ \ u\prec\omega\right\},$
\item(3)  $n_{0}\,=\,\frac{2(N+1)}{\left\{D_{d} - \delta -4\varepsilon\left[1 + \frac{2(N-1)\beta_{X}}{\min{(1,(\alpha\beta^{*})_{min})}}\right]\right\}\alpha_{min}^{d}}+d,$ $\bar{k}\,=\, \frac{\varepsilon c_{1}}{2}+\frac{N+1}{(n-d)\alpha_{min}^{d}},$
\item(4) $\alpha_{min}\,=\,\min \{\alpha_{X},\alpha_{Y}\}$, ${\beta}_{\alpha, \max}\,=\,\min\left\{\left(1+\frac{\beta_{X}}{\alpha_{X}}\right),\left(1+\frac{\beta_{Y}}{\alpha_{Y}}\right)\right\}\cdot$
\end{description}
\end{theorem}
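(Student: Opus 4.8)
\begin{33pff}
The plan is to follow the consistency analysis of the Context algorithm used in the proof of Theorem \ref{enteorema2}, transferring the separation structure of the true context tree ${\cal{T}}_{X}$ onto the contaminated kernel $p_{Z}(\cdot|\cdot)$ through the uniform bound of Theorem \ref{enteor31}. First I would split the error event as $\{\hat{{\cal{T}}}_{n}^{\delta,d}|_K\neq{\cal{T}}_{X}|_K\}\subseteq{\cal O}_n\cup{\cal U}_n$, where ${\cal O}_n$ is the \emph{overestimation} event on which the estimator retains a branch $\omega$ with $p_{X}(\cdot|\omega)=p_{X}(\cdot|\mbox{suf}(\omega))$, and ${\cal U}_n$ is the \emph{underestimation} event on which it prunes a branch belonging to some ${\cal{C}}_{k}$, i.e.\ with $p_{X}(a|\omega)\neq p_{X}(a|\mbox{suf}(\omega))$ for some $a$. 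By the definition of $d$ in item (2), every internal discrepancy of ${\cal{T}}_{X}|_K$ is already visible at depth $d$, so it suffices to control the statistic $\Delta_n(\omega)$ of Definition \ref{estarvcont} over the finitely many sequences with $l(\omega)\le d$.

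The technical core is a uniform concentration estimate for the empirical kernel. I would first verify that the mixture $Z$ is stationary, ergodic and non-null: conditioning on $\xi_0$ and using $p_{X}\ge\alpha_{X}$, $p_{Y}\ge\alpha_{Y}$ gives $p_{Z}(a|\omega)\ge(1-\varepsilon)\alpha_{X}+\varepsilon\alpha_{Y}\ge\alpha_{min}$, hence $\mu_{Z}(\omega)\ge\alpha_{min}^{\,l(\omega)}$; one also checks that $Z$ inherits a summable continuity rate from $X$ and $Y$, which is the origin of the factor $\beta_{\alpha,\max}$ in $c_3$. With $Z$ in this class, for every $l(\omega)\le d$ the count $N_n(\omega)$ is of order $(n-d)\alpha_{min}^{d}$, and the concentration inequality for the empirical transition probabilities of a non-null continuous process (as in the cited consistency proof) yields, for each symbol $a$,
$$
\mathbb{P}\Big(\big|\hat{p}_{Z}(a|\omega)_n-p_{Z}(a|\omega)\big|>t\Big)\le 2e^{1/e}\exp\Big\{-c\,(n-d)\,\alpha_{min}^{2d}\,t^{2}/\beta_{\alpha,\max}\Big\}.
$$
The deterministic finite-sample term $(N+1)/((n-d)\alpha_{min}^{d})$ appearing in $\bar{k}$ and $n_0$ is exactly the bias introduced by the smoothing $(N_n(\omega a)+1)/(N_n(\omega\cdot)+|{\cal{A}}|)$ of Definition \ref{estarvcont}.

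Next I would reduce each event to a comparison of $\Delta_n$ with $\delta$. On ${\cal O}_n$, for a branch that should be pruned the triangle inequality together with Theorem \ref{enteor31} applied to both $\omega$ and $\mbox{suf}(\omega)$ bounds the deterministic part of $\Delta_n(\omega)$ — contamination bias plus smoothing bias — by $2\bar{k}$, since $c_1\varepsilon$ is twice the bound of Theorem \ref{enteor31}; hence $\Delta_n(\omega)>\delta$ forces a random fluctuation larger than $\delta-2\bar{k}$. On ${\cal U}_n$, for a branch in ${\cal{C}}_{d}$ the separation $D_d$ of (\ref{defDk}) survives the same comparison, leaving a deterministic part of at least $D_d-2\bar{k}$, so that $\Delta_n(\omega)\le\delta$ forces a fluctuation larger than $D_d-\delta-2\bar{k}$. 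In both cases the required fluctuation exceeds $\min(D_d-\delta,\delta)-2\bar{k}$. The admissible window $\delta\in(c_1\varepsilon,\,D_d-c_1\varepsilon)$ is nonempty precisely when $\varepsilon<D_d/2c_1$, and requiring $n>n_0$ makes the finite-sample part of $\bar{k}$ small enough that $\min(D_d-\delta,\delta)-2\bar{k}$ is strictly positive.

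Finally I would insert this gap as the deviation threshold $t$ in the concentration bound and take a union bound over the at most $N^{d}$ relevant sequences and the $N$ symbols, which produces the prefactor $c_2=48N^{d}(N+1)e^{1/e}$ and the rate $c_3$, with $\alpha_{min}^{2d}$ and $\beta_{\alpha,\max}$ as displayed. The step I expect to be the main obstacle is precisely this uniform concentration: unlike the zero-inflated model, here $Z$ interleaves two distinct kernels, so proving that $Z$ remains non-null and continuous with summable rate, and quantifying its mixing sharply enough to produce $\beta_{\alpha,\max}$, is the delicate part; once this is secured, the remaining estimates run parallel to the proof of Theorem \ref{enteorema2}. \fin
\end{33pff}
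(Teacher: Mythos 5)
Your overall architecture is exactly the paper's: the split into overestimation and underestimation events, the reduction to depth $d$ via item (2), the transfer of the separation $D_d$ through the uniform bound of Theorem \ref{enteor31} (so that the deterministic part of $\Delta_n$ costs $2\bar{k}$, with $\bar{k}=\varepsilon c_1/2$ plus the smoothing bias), and the final union bound over $N^d$ sequences and $N$ symbols giving $c_2$ and $c_3$ --- this is Lemmas \ref{33enlema11}, \ref{33lem12} and the concluding argument almost verbatim. The one place where you take a genuinely different route is the concentration step, which you correctly flag as the main obstacle. You propose to show that ${\bf Z}$ itself is non-null with a \emph{summable continuity rate} and then invoke the standard concentration inequality for such processes. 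The paper deliberately avoids establishing any continuity rate for ${\bf Z}$ (which would amount to a filtering-stability statement for a hidden chain and is not obviously summable): instead, Lemma \ref{33gale3} proves a conditional loss-of-memory bound for $Z_k^{k+j-1}$ given the \emph{latent} pasts $X_1^i$, $Y_1^i$, $\xi_1^i$, which factors through the known loss-of-memory sequences $\rho_{\cdot,X}$, $\rho_{\cdot,Y}$ of the two uncontaminated chains; this conditional bound is exactly what is needed to control $\|\mathbb{E}(U_k|{\cal M}_t)\|_\infty$ in the Dedecker--Doukhan moment inequality (Lemma \ref{33lem10}), and it is where $\beta_{\alpha,\max}$ enters. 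So your plan reaches the same exponential bound, but the key lemma you would need to prove is harder than the one the paper actually proves; if you replace ``${\bf Z}$ inherits a summable continuity rate'' by ``${\bf Z}$ forgets the latent initial conditions of ${\bf X}$, ${\bf Y}$, ${\boldsymbol\xi}$ at a summable rate,'' your argument becomes the paper's.
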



\begin{corollary}\label{3corola3}
For all integer $K$ and for almost all infinite sample $Z_{1},Z_{2},\ldots$ there exists an $\overline{n}$ such that for all $n\geq\overline{n}$ we have
$$
\hat{{\cal{T}}}_{n}^{\delta, d}\left|\right._{K}={\cal{T}}_{X}\left|\right._{K},
$$
where $d$ and $\delta$ are the same as in Theorem \ref{3enteo2}. 
\end{corollary}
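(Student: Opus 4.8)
The plan is to obtain the almost sure statement directly from the exponential bound established in Theorem \ref{3enteo2}, via a standard first Borel--Cantelli argument; the corollary carries no analytic difficulty of its own beyond that theorem.

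First I would fix the integer $K$ and select $d$ and $\delta$ exactly as prescribed in Theorem \ref{3enteo2}, so that the admissibility conditions $\varepsilon \in \left(0, D_d/2c_1\right)$ and $\delta \in \left(c_1\varepsilon, D_d - c_1\varepsilon\right)$ hold for the given noise parameter. For each $n$ define the error event
$$
A_n := \left\{\hat{{\cal{T}}}_{n}^{\delta, d}\left|\right._K \neq {\cal{T}}_X\left|\right._K\right\}.
$$
Theorem \ref{3enteo2} then supplies an $n_0$ such that $\mathbb{P}(A_n) \leq c_2 \exp\left\{-c_3(n-d)\right\}$ for every $n > n_0$, where the explicit constant $c_3$ is strictly positive.

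Next I would observe that, since $c_3 > 0$, the right-hand side is the general term of a convergent geometric series, so that $\sum_{n > n_0} \mathbb{P}(A_n) < \infty$. The first Borel--Cantelli lemma then yields $\mathbb{P}\left(\limsup_n A_n\right) = 0$; equivalently, for almost every infinite realization $Z_1, Z_2, \ldots$ only finitely many of the events $A_n$ occur. Taking $\overline{n}$ to be one more than the largest index $n$ for which $A_n$ happens --- a quantity that is finite almost surely --- gives $\hat{{\cal{T}}}_{n}^{\delta, d}\left|\right._K = {\cal{T}}_X\left|\right._K$ for every $n \geq \overline{n}$, which is precisely the assertion.

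The argument is structurally identical to the proof of Corollary \ref{encorola3} for the zero inflated regime; only the constants $c_2$ and $c_3$ change, now being those produced by Theorem \ref{3enteo2}. Consequently there is no genuine obstacle at this stage: all the work --- the uniform control of the perturbed kernel via Theorem \ref{enteor31} and the concentration of the empirical transition estimator --- is already absorbed into the exponential bound of Theorem \ref{3enteo2}, and the passage from a summable probability bound to almost sure recovery is routine.
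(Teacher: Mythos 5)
Your argument is correct and is exactly the paper's own proof: apply the exponential bound of Theorem \ref{3enteo2}, note the resulting series in $n$ is summable since $c_3>0$, and conclude by the first Borel--Cantelli lemma that only finitely many error events occur almost surely. No further comment is needed.
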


\vspace{.2cm}

\paragraph{Robustness}
In Theorems \ref{enteorema2} and \ref{3enteo2} as well as in Corollaries \ref{encorola3}  and \ref{3corola3} we can see that the estimator $\hat{{\cal{T}}}_{n}^{\delta, d}\left|\right._{K}$ is robust. Here robustness means that even if the estimation process is based on a random sample of the  perturbed process {\bf Z}, the estimator $\hat{{\cal{T}}}_{n}^{\delta, d}\left|\right._{K}$ is able to recover the truncated context tree  ${\cal{T}}_{X}\left|\right._{K}$ of the original process {\bf X}.

\section{Proofs}\label{todasdem}

\subsection{Proof of Theorem \ref{enteor1}}

In order to prove the Theorem  \ref{enteor1} we establish three lemmas.

\begin{lemma}\label{lema4}
  For any $\varepsilon \in (0,1)$, any $\omega_{-\infty}^{0} \in {\cal
    A}_{-\infty}^{-1}$, any $k>j\geq 0$ and any $a,b \in {\cal{A}}$,
  we have
$$
\left|\mathbb{P}\left(X_{0}=\omega_{0}|X_{-j}^{-1}=\omega_{-j}^{-1}, X_{-j-1}=a, Z_{-j-1}=b, Z_{-k}^{-j-2}=\omega_{-k}^{-j-2}\right)-p_{X}(\omega_{0}|\omega_{-\infty}^{-1}) \right|\leq\beta_{j,X}.
$$
\end{lemma}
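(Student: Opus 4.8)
The plan is to reduce the mixed conditioning---on $X$-values near time $0$ and on $Z$-values in the more distant past---to an average of the genuine transition probabilities $p_X(\omega_0\mid\cdot)$ over those $X$-histories that are compatible with the conditioning event, and then to control each such term uniformly by $\beta_{j,X}$, using that all compatible histories coincide with $\omega_{-\infty}^{-1}$ on the last $j$ coordinates.

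First I would set $B:=\{X_{-j}^{-1}=\omega_{-j}^{-1},\,X_{-j-1}=a,\,Z_{-j-1}=b,\,Z_{-k}^{-j-2}=\omega_{-k}^{-j-2}\}$ and note, since $Z_t=X_t\xi_t$, that $B$ is measurable with respect to $\sigma(X_{-k}^{-1},\xi_{-k}^{-j-1})$, hence $B\in\sigma(X_{-\infty}^{-1},\boldsymbol\xi)$. The structural fact I would invoke is that, conditionally on the whole past $X_{-\infty}^{-1}$, the symbol $X_0$ is distributed according to $p_X(\cdot\mid X_{-\infty}^{-1})$ and is independent of the noise sequence $\boldsymbol\xi$; consequently $X_0$ is conditionally independent of $\mathbf 1_B$ given $X_{-\infty}^{-1}$. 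Therefore
$$
\mathbb P\!\left(X_0=\omega_0\mid X_{-\infty}^{-1},B\right)=p_X(\omega_0\mid X_{-\infty}^{-1}),
$$
and the tower property gives
$$
\mathbb P\!\left(X_0=\omega_0\mid B\right)=\mathbb E\!\left[\,p_X(\omega_0\mid X_{-\infty}^{-1})\ \middle|\ B\,\right].
$$

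Next I would use that on $B$ the coordinates $X_{-j}^{-1}$ are pinned to $\omega_{-j}^{-1}$, so every realization $x_{-\infty}^{-1}$ compatible with $B$ satisfies $x\overset{j}{=}\omega$ (the free coordinate $X_{-j-1}=a$ and the unobserved coordinates $X_{-k}^{-j-2}$ all sit at positions at least $j+1$ from the end, hence outside the window that $\beta_{j,X}$ controls). The definition of the continuity rate together with non-nullness then yield, for each such $x_{-\infty}^{-1}$,
$$
\left|p_X(\omega_0\mid x_{-\infty}^{-1})-p_X(\omega_0\mid \omega_{-\infty}^{-1})\right|=p_X(\omega_0\mid\omega_{-\infty}^{-1})\left|1-\frac{p_X(\omega_0\mid x_{-\infty}^{-1})}{p_X(\omega_0\mid\omega_{-\infty}^{-1})}\right|\le\beta_{j,X}.
$$
Subtracting the constant $p_X(\omega_0\mid\omega_{-\infty}^{-1})$ inside the conditional expectation and applying the triangle inequality (Jensen) gives
$$
\left|\mathbb P\!\left(X_0=\omega_0\mid B\right)-p_X(\omega_0\mid\omega_{-\infty}^{-1})\right|=\left|\mathbb E\!\left[\,p_X(\omega_0\mid X_{-\infty}^{-1})-p_X(\omega_0\mid\omega_{-\infty}^{-1})\ \middle|\ B\,\right]\right|\le\beta_{j,X},
$$
which is the assertion; note the bound is free of $k$, $a$, $b$, and $\varepsilon$.

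The step I expect to be the main obstacle is the conditional-independence bookkeeping in the reduction: one must verify carefully that adjoining the $Z$-values to the conditioning---which inject information about the noise $\boldsymbol\xi$ and about the unobserved symbols $X_{-k}^{-j-2}$---does not alter the conditional law of $X_0$ beyond what is already encoded in the $X$-history. This is precisely where the independence of $\boldsymbol\xi$ from $\mathbf X$ and the fact that $B$ involves only times $\le -j-1$ enter; once the identity $\mathbb P(X_0=\omega_0\mid B)=\mathbb E[p_X(\omega_0\mid X_{-\infty}^{-1})\mid B]$ is justified, the continuity-rate estimate is immediate and uniform.
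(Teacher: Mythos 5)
Your proof is correct and follows essentially the same route as the paper's: both express the mixed conditional probability as a weighted average of $p_X(\omega_0\mid\cdot)$ over pasts that agree with $\omega_{-\infty}^{-1}$ on the last $j$ coordinates, and then bound each term by $\beta_{j,X}$ via the continuity rate. The only difference is presentational: you justify the averaging identity abstractly, via the conditional independence of $X_0$ and $\boldsymbol{\xi}$ given $X_{-\infty}^{-1}$ together with the tower property, whereas the paper writes out the corresponding Bayes ratio over finite strings $u_{-k}^{-j-2}$ explicitly and uses the Fern\'andez--Galves inf/sup sandwich to pass from finite-past to infinite-past conditionals.
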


\begin{proof}
For all $j\geq 0$, we can write the following identity  
\begin{eqnarray}
\lefteqn{\mathbb{P}\left(X_{0}=\omega_{0}|X_{-j}^{-1}=\omega_{-j}^{-1}, X_{-j-1}=a, Z_{-j-1}=b, Z_{-k}^{-j-2}=\omega_{-k}^{-j-2}\right)}\label{conl1}\\
&=&\frac{\displaystyle\sum_{u_{-k}^{-j-2}}p_{X}\left(u_{-k}^{-j-2}a\omega_{-j}^{-1}\omega_{0}\right)\mathbb{P}\left(Z_{-k}^{-j-1}=\omega_{-k}^{-j-2}b|X_{-k}^{-j-1}=u_{-k}^{-j-2}a\right)}{\displaystyle\sum_{u_{-k}^{-j-2}}p_{X}\left(u_{-k}^{-j-2}a\omega_{-j}^{-1}\right)\mathbb{P}\left(Z_{-k}^{-j-1}=\omega_{-k}^{-j-2}b|X_{-k}^{-j-1}=u_{-k}^{-j-2}a\right)},\nonumber
\end{eqnarray}
where the last two sums are over all sequences $u_{-k}^{-j-2}\in{\cal{A}}_{-k}^{-j-2}$ such that
$$
\mathbb{P}\left(X_{-k}^{-j-1}=u_{-k}^{-j-2}a,Z_{-k}^{-j-1}=\omega_{-k}^{-j-1}b\right)\neq 0.
$$
Now, as in Fernandez and Galves (2002), we have
\begin{equation}\label{eq31}
\inf_{v_{-\infty}^{-j-1}}p_{X}\left(\omega_{0}|v_{-\infty}^{-j-1}\omega_{-j}^{-1}\right)\leq p_{X}\left(\omega_{0}|u_{-k}^{-j-2}a\omega_{-j}^{-1}\right)\leq \sup_{v_{-\infty}^{-j-1}}p_{X}\left(\omega_{0}|v_{-\infty}^{-j-1}\omega_{-j}^{-1}\right).
\end{equation}
Since the process {\bf X} is continuous, by (\ref{eq31}), it follows that
\begin{equation}\label{eq32}
p_{X}\left(\omega_{0}|\omega_{-\infty}^{-1}\right)-\beta_{j,X}\leq p_{X}\left(\omega_{0}|u_{-k}^{-j-2}a\omega_{-j}^{-1}\right)\leq p_{X}\left(\omega_{0}|\omega_{-\infty}^{-1}\right)+\beta_{j,X}.
\end{equation}
Thus, by plugging (\ref{conl1}) into (\ref{eq32}),  we conclude the proof of this lemma.
\end{proof}

\begin{lemma}\label{enlema5}
For any $\varepsilon \in (0,1)$, any $k\geq 0$ and any
$\omega_{-k}^{-1} \in {\cal A}_{-k}^{-1}$, $\omega_0 \in {\cal A}$, we have
\begin{equation}\label{eq2.1}
p_{Z}\left(\omega_{0}|\omega_{-k}^{-1}\right)\geq (1-\varepsilon)\alpha_{X}
\end{equation}
and
\begin{equation}\label{eq2.2}
\mathbb{P}\left(X_{0}=\omega_{0}|Z_{-k}^{-1}=\omega_{-k}^{-1}\right)\geq\alpha_{X}.
\end{equation}
Furthermore, for any $0\leq j\leq k$ we can write
\begin{equation}\label{eq2.3}
\mathbb{P}\left(X_{-j-1}=\omega_{-j-1}|X_{-j}^{-1}=\omega_{-j}^{-1}, Z_{-k}^{-j-2}=\omega_{-k}^{-j-2}\right)\geq \alpha_{X}\beta_{X}^{*}.
\end{equation}
\end{lemma}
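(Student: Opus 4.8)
The plan is to separate the two ``forward'' estimates (\ref{eq2.1})--(\ref{eq2.2}), which follow from non-nullness essentially for free, from the ``backward'' estimate (\ref{eq2.3}), which carries all the work. The organizing remark is that the noise $\boldsymbol\xi$ is independent of $\mathbf X$ and acts coordinatewise, so that conditionally on $\mathbf X$ the event $\{Z_{-k}^{-j-2}=\omega_{-k}^{-j-2}\}$ depends only on $X_{-k}^{-j-2}$; I will denote the resulting nonnegative contamination weight by $Q(x_{-k}^{-j-2}):=\mathbb P(Z_{-k}^{-j-2}=\omega_{-k}^{-j-2}\mid X_{-k}^{-j-2}=x_{-k}^{-j-2})=\prod_{i=-k}^{-j-2}\mathbb P(Z_i=\omega_i\mid X_i=x_i)$.

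For (\ref{eq2.2}) I would first note that $X_0$ and $Z_{-k}^{-1}$ are conditionally independent given the whole past $X_{-\infty}^{-1}$: given that past, $X_0$ is drawn from $p_X(\cdot\mid X_{-\infty}^{-1})$ while $Z_{-k}^{-1}$ is a function of $X_{-k}^{-1}$ and the independent noise. Hence
$$\mathbb P\big(X_0=\omega_0\mid Z_{-k}^{-1}=\omega_{-k}^{-1}\big)=\mathbb E\big[p_X(\omega_0\mid X_{-\infty}^{-1})\,\big|\,Z_{-k}^{-1}=\omega_{-k}^{-1}\big]\geq\alpha_X,$$
the bound coming from applying non-nullness pointwise inside the conditional expectation. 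For (\ref{eq2.1}) I would split on $\omega_0$: since $\xi_0$ is independent of $(X_0,Z_{-k}^{-1})$, the case $\omega_0=1$ forces $X_0=1,\xi_0=1$ and yields $p_Z(1\mid\omega_{-k}^{-1})=(1-\varepsilon)\,\mathbb P(X_0=1\mid Z_{-k}^{-1}=\omega_{-k}^{-1})$, while the case $\omega_0=0$ is bounded below by the contribution of $\{X_0=0,\xi_0=1\}$, namely $(1-\varepsilon)\,\mathbb P(X_0=0\mid Z_{-k}^{-1}=\omega_{-k}^{-1})$; in both cases (\ref{eq2.2}) produces the factor $\alpha_X$ and gives $(1-\varepsilon)\alpha_X$.

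The substance is (\ref{eq2.3}). I would expand the backward conditional probability as a ratio, conditioning on the hidden past and factoring $Q$ out of the joint law. Writing $F(x,c):=\prod_{i=-j}^{-1}p_X\big(\omega_i\mid x\,c\,\omega_{-j}^{i-1}\big)$ for a past $x$ with symbol $c$ inserted at position $-j-1$, the probability in (\ref{eq2.3}) equals
$$\frac{\sum_x p_X(\omega_{-j-1}\mid x)\,F(x,\omega_{-j-1})}{\sum_{c\in\mathcal A}\sum_x p_X(c\mid x)\,F(x,c)},$$
where $\sum_x$ abbreviates the average over the hidden past $x=X_{-\infty}^{-j-2}$ under the stationary law, weighted by $Q(x_{-k}^{-j-2})$. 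The crucial point is that, for fixed $x$, the two conditioning strings feeding $F(x,\omega_{-j-1})$ and $F(x,c)$ differ only in the single coordinate at position $-j-1$ and agree on the $i+j$ most recent coordinates when forming the factor at time $i$; the continuity rate then gives $F(x,\omega_{-j-1})\geq\prod_{m=0}^{j-1}(1-\beta_{m,X})\,F(x,c)\geq\beta_X^{*}\,F(x,c)$ for every $c$. Bounding $p_X(\omega_{-j-1}\mid x)\geq\alpha_X$ in the numerator, and in the denominator using $F(x,c)\leq F(x,\omega_{-j-1})/\beta_X^{*}$ together with $\sum_c p_X(c\mid x)=1$ to collapse it to $\sum_x F(x,\omega_{-j-1})/\beta_X^{*}$, the common factor cancels and leaves exactly $\alpha_X\beta_X^{*}$.

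The main obstacle I anticipate is the bookkeeping in (\ref{eq2.3}): one must justify pulling the contamination weight out of the joint law (conditional independence of $Z_{-k}^{-j-2}$ from $X_{-j-1}^{-1}$ given $X_{-k}^{-j-2}$), correctly read off the agreement depth $i+j$ between the two histories so that the telescoping product $\prod_m(1-\beta_{m,X})$ emerges, and—most importantly—resist bounding $p_X(c\mid x)\leq1$ in the denominator, which would cost a spurious factor $|\mathcal A|$; the sharp constant $\alpha_X\beta_X^{*}$ survives only because the normalization $\sum_c p_X(c\mid x)=1$ is used instead.
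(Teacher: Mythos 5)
Your proposal is correct and follows essentially the same route as the paper: the forward bounds (\ref{eq2.1})--(\ref{eq2.2}) via conditioning on the hidden $\mathbf X$-past and non-nullness, and the backward bound (\ref{eq2.3}) via the ratio representation (\ref{eq2.2888}) with a termwise estimate. In fact your telescoping computation of $F(x,\omega_{-j-1})\geq\beta_X^{*}F(x,c)$ together with the normalization $\sum_{c}p_X(c|x)=1$ supplies exactly the justification for the inequality (\ref{e.eeee}) that the paper asserts without detail.
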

\begin{proof}
  Initially, we will prove that (\ref{eq2.1}) follows from
  (\ref{eq2.2}). To see that, let us first consider the case
  $\omega_{0}= 0$, we have
\begin{eqnarray*}
  p_{Z}\left(0|\omega_{-k}^{-1}\right)
  &=&\varepsilon\mathbb{P}\left(X_{0}=0|Z_{-k}^{-1}=\omega_{-k}^{-1}\right)+(1-\varepsilon)\mathbb{P}\left(X_{0}=0|Z_{-k}^{-1}=\omega_{-k}^{-1}\right) + \\
  & & + \varepsilon\mathbb{P}\left(X_{0}=1|Z_{-k}^{-1}=\omega_{-k}^{-1}\right)\geq (1+\varepsilon)\alpha_{X}.
\end{eqnarray*}
Similarly, for the case $\omega_{0} = 1$,  we can write
\begin{eqnarray*}
p_{Z}\left(1|\omega_{-k}^{-1}\right)=\mathbb{P}\left(X_{0}=1,\xi_{0}=1|Z_{-k}^{-1}=\omega_{-k}^{-1}\right)\geq (1-\varepsilon)\alpha_{X}.
\end{eqnarray*}
Therefore, for any $\omega_{0} \in {\cal{A}}$ we obtain $
p_{Z}\left(\omega_{0}|\omega_{-k}^{-1}\right)\geq (1-\varepsilon)\alpha_{X}.
$
Now, it is easy to see that
\begin{eqnarray*}
\lefteqn{\mathbb{P}\left(X_{0}=\omega_{0}|Z_{-k}^{-1}=\omega_{-k}^{-1}\right)}\\
&=&\lim_{l\rightarrow +\infty}\frac{\displaystyle\sum_{u_{-l}^{-1}}p_{X}\left(\omega_{0}|\omega_{-\infty}^{-l-1}u_{-l}^{-1}\right)p_{X}\left(u_{-l}^{-1}|\omega_{-\infty}^{-l-1}\right)\mathbb{P}\left( \displaystyle\bigcap_{-k \leq -t \leq -1: \ u_{-t}\neq 0}\xi_{-t}=w_{-t}\right)}{\displaystyle\sum_{u_{-l}^{-1}}p_{X}\left(u_{-l}^{-1}|\omega_{-\infty}^{l-1}\right)\mathbb{P}\left( \displaystyle\bigcap_{-k \leq -t \leq -1: \ u_{-t}\neq 0}\xi_{-t}=w_{-t}\right)},
\end{eqnarray*}
where the last sums are over all the sequences $u_{-l}^{-1}\in{\cal{A}}_{-l}^{-1}$ such that 
$$\mathbb{P}\left(X_{-l}^{-1}=u_{-l}^{-1},Z_{-k}^{-1}=\omega_{-k}^{-1}\right)\neq 0.$$ From the non-nullness hypothesis of the process  {\bf X}, we can write $p_{X}\left(\omega_{0}|\omega_{-\infty}^{-l-1}u_{-l}^{-1}\right)\geq\alpha_{X}.$ Consequently,
$$
\mathbb{P}\left(X_{0}=\omega_{0}|Z_{-k}^{-1}=\omega_{-k}^{-1}\right)\geq\alpha_{X}.
$$
In order to show (\ref{eq2.3}) we observe that the following equality is true
\begin{eqnarray}
\lefteqn{\mathbb{P}\left(X_{-j-1}=\omega_{-j-1}|X_{-j}^{-1}=\omega_{-j}^{-1}, Z_{-k}^{-j-2}=\omega_{-k}^{-j-2}\right)}\label{eq2.2888}\\
&=&\frac{\displaystyle\sum_{x_{-k}^{-j-2}}\mathbb{P}\left(Z_{-k}^{-j-2}=\omega_{-k}^{-j-2}|X_{-k}^{-j-2}=x_{-k}^{-j-2}\right)p_{X}\left(x_{-k}^{-j-2}\omega_{-j-1}^{-1}\right)}{\displaystyle\sum_{x_{-k}^{-j-2}}\mathbb{P}\left(Z_{-k}^{-j-2}=\omega_{-k}^{-j-2}|X_{-k}^{-j-2}=x_{-k}^{-j-2}\right)p_{X}\left(x_{-k}^{-j-2}\omega_{-j}^{-1}\right)},\nonumber
\end{eqnarray}
where the the last sums are over all sequences $x_{-k}^{-j-2}\in{\cal{A}}_{-k}^{-j-2}$ such that
$$
\mathbb{P}\left(X_{-k}^{-j-2}=x_{-k}^{-j-2},Z_{-k}^{-j-2}=\omega_{-k}^{-j-2}\right)\neq 0.
$$
Now we can see that
\begin{eqnarray}\label{e.eeee}
\frac{p_{X}\left(x_{-k}^{-j-2}\omega_{-j-1}^{-1}\right)}{p_{X}\left(x_{-k}^{-j-2}\omega_{-j}^{-1}\right)}
&\geq&\alpha_{X}\beta_{X}^{*}
\end{eqnarray}
The proof of the third statement of this lemma follows from the last inequality and identity (\ref{eq2.2888}).
\end{proof}

\begin{lemma}\label{lema6}
For any $\varepsilon \in (0,1)$, $k>j\geq 0$ and  $\omega_{-k}^{0} \in {\cal{A}}$, we have
$$
\sup_{j,k}\sup_{\omega_{-k}^{0}}{\mathbb{P}\left(X_{-j-1}={\omega' }_{-j-1}|X_{-j}^{-1}=\omega_{-j}^{-1}, Z_{-k}^{-j-1}=\omega_{-k}^{-j-1}\right)}\leq \frac{\varepsilon}{(1+\varepsilon)\alpha_{X}\beta_{X}^{*}},
$$ 
where ${\omega' }_{-j-1}\neq{\omega }_{-j-1}$.
\end{lemma}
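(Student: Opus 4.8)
The plan is to reduce the estimate to a single genuinely nontrivial case, using the fact that the zero-inflated map $Z_t=X_t\xi_t$ is only ambiguous when the observed symbol is $0$. First I would split on the value of $\omega_{-j-1}$. If $\omega_{-j-1}=1$, then conditioning on $Z_{-j-1}=\omega_{-j-1}=1$ forces $X_{-j-1}=1=\omega_{-j-1}$, so for $\omega'_{-j-1}\neq\omega_{-j-1}$ the event $\{X_{-j-1}=\omega'_{-j-1}\}$ has conditional probability $0$ and the bound holds trivially. Hence the whole content lies in the case $\omega_{-j-1}=0$, where $\omega'_{-j-1}=1$ because the alphabet is binary.

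In that case I would compute the conditional probability explicitly by isolating the coin $\xi_{-j-1}$. Writing $A=\{X_{-j}^{-1}=\omega_{-j}^{-1}\}$ and $B'=\{Z_{-k}^{-j-2}=\omega_{-k}^{-j-2}\}$, observe that $B'$ is a function of $(X_t,\xi_t)_{t\le -j-2}$ and $A$ only of $X_{-j}^{-1}$, so the independence of $\boldsymbol\xi$ from $\mathbf X$ and of the coins among themselves makes $\xi_{-j-1}$ independent of $A$, $B'$ and $X_{-j-1}$. Since $\{X_{-j-1}=1,Z_{-j-1}=0\}=\{X_{-j-1}=1,\xi_{-j-1}=0\}$ while $\{X_{-j-1}=0\}\subseteq\{Z_{-j-1}=0\}$ automatically, pulling out the factor $\varepsilon=\mathbb{P}(\xi_{-j-1}=0)$ gives
\[
\mathbb{P}\!\left(X_{-j-1}=1\mid A,B',Z_{-j-1}=0\right)=\frac{\varepsilon\,P_1}{P_0+\varepsilon\,P_1},
\]
where $P_i=\mathbb{P}(X_{-j-1}=i,A,B')$ for $i\in\{0,1\}$. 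The asymmetric weighting $P_0+\varepsilon P_1$ in the denominator is exactly what encodes the contamination.

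Next I would set $q=\mathbb{P}(X_{-j-1}=1\mid A,B')=P_1/(P_0+P_1)$, so the right-hand side becomes $f(q)=\varepsilon q/\bigl(1-(1-\varepsilon)q\bigr)$, which is strictly increasing in $q$ on $[0,1]$. Lemma \ref{enlema5}, inequality (\ref{eq2.3}) applied with $\omega_{-j-1}=0$, yields $1-q\ge\alpha_X\beta_X^{\ast}$, i.e. $q\le 1-\alpha_X\beta_X^{\ast}$, and this bound is uniform in $j$, $k$ and in the sequence $\omega$, which is what delivers the $\sup_{j,k}\sup_{\omega}$ in the statement. By monotonicity $f(q)\le f(1-c)$ with $c=\alpha_X\beta_X^{\ast}$, and it only remains to check the elementary inequality $f(1-c)\le \varepsilon/\bigl((1+\varepsilon)c\bigr)$; after clearing the (positive) denominators this reduces to $-\varepsilon(1-c)^2-c^2\le 0$, which always holds.

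The only place where real care is needed is the factorization step: one must verify that conditioning on the contaminated distant past $B'$ does not couple $\xi_{-j-1}$ to the remaining randomness, so that the clean factor $\varepsilon$ comes out. This is precisely where the independence of the coins and their independence from $\mathbf X$ are used, together with the two single-symbol set identities above. Everything after that is the monotonicity of $f$ and a single quadratic inequality, so I expect no further difficulty.
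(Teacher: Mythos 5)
Your proof is correct and follows essentially the same route as the paper: you write the conditional probability as a ratio by peeling off the coin $\xi_{-j-1}$, use its independence from $\mathbf X$ and from $Z_{-k}^{-j-2}$, and invoke inequality (\ref{eq2.3}) of Lemma \ref{enlema5} to control the denominator. Your final step via the monotone function $f(q)=\varepsilon q/(1-(1-\varepsilon)q)$ is a slightly more careful packaging than the paper's separate bounds (numerator $\le\varepsilon$, denominator $\ge(1+\varepsilon)\alpha_X\beta_X^{*}$), but it lands on the same estimate.
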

\begin{proof}
For ${\omega' }_{-j-1}\neq{\omega }_{-j-1}$, we have
\begin{eqnarray*}
\lefteqn{\mathbb{P}\left(X_{-j-1}={\omega' }_{-j-1}|X_{-j}^{-1}=\omega_{-j}^{-1}, Z_{-k}^{-j-1}=\omega_{-k}^{-j-1}\right)} \\
&=&\frac{\mathbb{P}\left(X_{-j-1}={\omega}'_{-j-1},Z_{-j-1}=\omega_{-j-1}|X_{-j}^{-1}=\omega_{-j}^{-1}, Z_{-k}^{-j-2}=\omega_{-k}^{-j-2}\right)}{\mathbb{P}\left(Z_{-j-1}=\omega_{-j-1}|X_{-j}^{-1}=\omega_{-j}^{-1}, Z_{-k}^{-j-2}=\omega_{-k}^{-j-2}\right)}\cdot
\end{eqnarray*}
Now, by Lemma \ref{enlema5} and the last equality we can obtain  
$$
\mathbb{P}\left(X_{-j-1}={\omega' }_{-j-1}|X_{-j}^{-1}=\omega_{-j}^{-1}, Z_{-k}^{-j-1}=\omega_{-k}^{-j-1}\right)\leq \frac{\varepsilon}{(1+\varepsilon)\alpha_{X}\beta_{X}^{*}},$$
as desired.

\end{proof}


\begin{pft11} 
{\rm 
For any $a \in {\cal{A}}$ and every sequence $\omega_{-k}^{-1} \in {\cal{A}}_{-k}^{-1}$, one can see that
\begin{equation}\label{t1}
\left|p_{Z}\left(a|\omega_{-k}^{-1}\right) - \mathbb{P}\left(X_{0}=a|Z_{-k}^{-1}=\omega_{-k}^{-1}\right) \right| \leq \varepsilon.
\end{equation}
Observe that, for  $k=0$, the assertion of Theorem \ref{enteor1} is trivially valid. 
So it remains to prove  (\ref{teo1111111}) for $k\geq 1$. In this case, we can write
\begin{eqnarray}
\lefteqn{\mathbb{P}(X_{0}=a|Z_{-k}^{-1}=\omega_{-k}^{-1})-\mathbb{P}(X_{0}=a|X_{-k}^{-1}=\omega_{-k}^{-1})}\nonumber\\
&=&\displaystyle\sum_{j=0}^{k-1}\left[\mathbb{P}(X_{0}=a|X_{-j}^{-1}=\omega_{-j}^{-1},Z_{-k}^{-j-1}=\omega_{-k}^{-j-1})\right.\nonumber \\
& & - \left.\mathbb{P}(X_{0}=a|X_{-j-1}^{-1}=\omega_{-j-1}^{-1},Z_{-k}^{-j-2}=\omega_{-k}^{-j-2}) \right].\label{sumteo11}
\end{eqnarray}
We can show that each parcel of the last sum can be rewritten as follows 
\begin{eqnarray}
\lefteqn{\displaystyle\sum_{b\in
    \left\{0,1\right\}}[\mathbb{P}(X_{0}=a|X_{-j}^{-1}=\omega_{-j}^{-1},X_{-j-1}=b,
  Z_{-k}^{-j-1}= \omega_{-k}^{-j-1})} \nonumber \\
&& - \mathbb{P}(X_{0}=a|X_{-j-1}^{-1}=\omega_{-j-1}^{-1},Z_{-k}^{-j-2}=\omega_{-k}^{-j-2})]\nonumber\\
& & \times \, \mathbb{P}(X_{-j-1}=b|X_{-j}^{-1}=\omega_{-j}^{-1},Z_{-k}^{-j-1}=\omega_{-k}^{-j-1}).\label{rem1ff}
\end{eqnarray}
For each $j$, $0\leq j\leq k-1$, the sum above has two parcels. From Lemma \ref{lema6} the parcel corresponding to $b={\omega'}_{-j-1}$, with ${\omega'}_{-j-1}\neq{\omega}_{-j-1}$, can be upper bounded by
\begin{eqnarray}
2\beta_{j,X}\frac{\varepsilon}{(1+\varepsilon)\alpha_{X}\beta_{X}^{*}}.\label{t7}
\end{eqnarray}
Now, for each $j$,  we shall limit the parcel of (\ref{rem1ff}) corresponding to $b=\omega_{-j-1}$ in (\ref{rem1ff}) by 
\begin{eqnarray}
\lefteqn{\displaystyle\sum_{c\in \left\{0,1\right\}}|\mathbb{P}(X_{0}=a|X_{-j}^{-1}=\omega_{-j}^{-1},X_{-j-1}={\omega}_{-j-1}, Z_{-k}^{-j-1}=\omega_{-k}^{-j-1})}\nonumber\\ 
&-&\mathbb{P}(X_{0}=a|X_{-j-1}^{-1}=\omega_{-j-1}^{-1},Z_{-k}^{-j-2}=\omega_{-k}^{-j-2},Z_{-j-1}=c)|\nonumber\\
&\times&\mathbb{P}(X_{-j-1}=\omega_{-j-1}|X_{-j}^{-1}=\omega_{-j}^{-1},Z_{-k}^{-j-1}=\omega_{-k}^{-j-1})\times\nonumber\\
&\times&\mathbb{P}(Z_{-j-1}=c|X_{-j-1}^{-1}=\omega_{-j-1}^{-1},Z_{-k}^{-j-2}=\omega_{-k}^{-j-2})\label{rem0fgh}
\end{eqnarray}
We observe that the parcel corresponding to $c=\omega_{-j-1}$ of (\ref{rem0fgh}) is null. When $c={\omega'}_{-j-1}$, with ${\omega'}_{-j-1}\neq{\omega}_{-j-1}$, the corresponding parcel of  (\ref{rem0fgh}) can be bounded from above by $2\beta_{j,X}\varepsilon$. From this, (\ref{t7}) and (\ref{t1}) we conclude the proof of Theorem \ref{enteor1}.}
\end{pft11}

\subsection{Proof of Theorem \ref{enteorema2}}\label{KXXX}

The proof of our second main result is based on four lemmas. The first one is consequence of Lemma $3.4$ of Galves and Leonardi (2008).

\begin{lemma}\label{lema88888} 
There exists a summable sequence  $\left(\rho_{l,X}\right)_{l \in \mathbb{N} }$, satisfying 
\begin{equation}\label{galeo2008}
\sum_{l\in \mathbb{N}}\rho_{l,X}\leq 2 \left( 1 + \frac{\beta_{X}}{\alpha_{X}}\right),
\end{equation}
such that for any $i\geq 1$, any $k \geq i$, any $j\geq 1$ and any finite sequence $\omega_{1}^{j}$, the following inequality holds
$$
\sup_{x^{i}_{1}, \theta_{1}^{i}\in {\cal{A}}^{i}}\left|\mathbb{P}\left(Z_{k}^{k+j-1}=\omega^{j}_{1}\left.\right|X_{1}^{i}=x_{1}^{i}, \xi_{1}^{i}=\theta_{1}^{i}\right)-\mu_{Z}\left(\omega^{j}_{1}\right)\right| \leq \frac{\displaystyle\sum_{l = 0}^{j-1}\rho_{k-i+l,X}}{(1-\varepsilon)^{j}},
$$
where $\alpha_{X}$ and $\beta_{X}$ are the same quantities as those in Definition \ref{nonnull}.
\end{lemma}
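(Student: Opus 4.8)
The plan is to obtain the statement as a transfer, through the contamination structure, of the block loss-of-memory estimate that Lemma 3.4 of Galves and Leonardi (2008) provides for the non-null, summable-continuity-rate process $\mathbf X$. Concretely, that lemma supplies a summable sequence $(\rho_{l,X})_{l\in\mathbb N}$ with $\sum_{l}\rho_{l,X}\le 2(1+\beta_X/\alpha_X)$ such that, for $k\ge i$, the law of the future block $X_k^{k+j-1}$ conditioned on the window $X_1^i$ differs from the stationary law $\mu_X$ by at most $\sum_{l=0}^{j-1}\rho_{k-i+l,X}$; the summability and the constant $2(1+\beta_X/\alpha_X)$ are exactly the output of their telescoping argument based on $\alpha_X$ and $\beta_X$. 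So I would quote that estimate for $\mathbf X$ verbatim and spend the work on passing from $\mathbf X$ to $\mathbf Z$.

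First I would exploit the coordinatewise structure $Z_t=X_t\xi_t$ together with the independence of $\boldsymbol\xi$ from $\mathbf X$. Because $k\ge i$, the contamination variables $\xi_k,\dots,\xi_{k+j-1}$ governing the observed block are independent of the conditioning $\sigma$-field $\sigma(X_1^i,\xi_1^i)$, and conditioning on $\xi_1^i$ does not alter the law of the $\mathbf X$-process. Writing $g(\omega_1^j\mid y)=\prod_{t}h(\omega_t\mid y_t)$ for the product contamination kernel, with $h(1\mid 1)=1-\varepsilon$, $h(0\mid 1)=\varepsilon$, $h(0\mid 0)=1$, $h(1\mid 0)=0$, I would record the two identities
\[
\mathbb P\!\left(Z_k^{k+j-1}=\omega_1^j\mid X_1^i=x_1^i,\xi_1^i=\theta_1^i\right)=\sum_{y}g(\omega_1^j\mid y)\,\mathbb P\!\left(X_k^{k+j-1}=y\mid X_1^i=x_1^i\right)
\]
and $\mu_Z(\omega_1^j)=\sum_{y}g(\omega_1^j\mid y)\,\mu_X(y)$. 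Subtracting, the quantity to be bounded becomes $\sum_{y}g(\omega_1^j\mid y)\big[\mathbb P(X_k^{k+j-1}=y\mid X_1^i=x_1^i)-\mu_X(y)\big]$, which is manifestly free of $\theta_1^i$, so the supremum over $(x_1^i,\theta_1^i)$ reduces to a supremum over $x_1^i$ inside the $\mathbf X$-estimate.

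It then remains to feed the $\mathbf X$ loss-of-memory bound into this expression and to account for the factor $(1-\varepsilon)^{-j}$. The latter is where the contamination genuinely costs something: the normalization of the conditional probabilities of the observed chain along the length-$j$ future block is controlled not by $\alpha_X$ but by the degraded non-nullness constant $(1-\varepsilon)\alpha_X$ coming from Lemma \ref{enlema5}, eq. (\ref{eq2.1}), and this $(1-\varepsilon)^{-1}$ per coordinate accumulates into $(1-\varepsilon)^{-j}$ across the $j$ symbols, while the continuity content in the numerator is still measured by the $\mathbf X$-rate $\rho_{l,X}$. I would also dispose of the boundary case $k=i$, where the first coordinate of the observed block overlaps the conditioning window and $Z_i=x_i\theta_i$ is deterministic; here the corresponding term $\rho_{0,X}/(1-\varepsilon)\ge 1$ dominates the (at most $1$) deviation, so the stated inequality still holds.

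The main obstacle is precisely the bookkeeping of the $(1-\varepsilon)^{-j}$ factor: one must verify that each of the $j$ future coordinates contributes exactly one factor $(1-\varepsilon)^{-1}$ through the degraded lower bound on the observed chain's probabilities, while simultaneously checking that the numerator remains controlled by the $\mathbf X$-continuity rate $\rho_{l,X}$ (summing to $2(1+\beta_X/\alpha_X)$) and not by a coarser $\mathbf Z$-specific rate. Making the kernel-transfer identities exact — especially at the overlap $k=i$ — and confirming that no additional $\varepsilon$-dependence leaks into the summable sequence is the delicate part; everything else is the direct quotation of Galves and Leonardi's Lemma 3.4.
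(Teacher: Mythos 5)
Your decomposition is exactly the paper's: both write $\mathbb{P}\left(Z_{k}^{k+j-1}=\omega_{1}^{j}\mid X_{1}^{i}=x_{1}^{i},\xi_{1}^{i}=\theta_{1}^{i}\right)$ and $\mu_{Z}\left(\omega_{1}^{j}\right)$ as the contamination kernel $\mathbb{P}\left(Z_{k}^{k+j-1}=\omega_{1}^{j}\mid X_{k}^{k+j-1}=\cdot\right)$ integrated against, respectively, the conditional and the stationary law of $X_{k}^{k+j-1}$, observe that $\theta_{1}^{i}$ drops out, and then quote Lemma $3.4$ of Galves and Leonardi (2008) for the process $\mathbf{X}$. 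The only (cosmetic) divergence is your account of the factor $(1-\varepsilon)^{-j}$, which comes most directly from the total mass of the kernel, $\sum_{y}\mathbb{P}\left(Z_{k}^{k+j-1}=\omega_{1}^{j}\mid X_{k}^{k+j-1}=y\right)\leq(1+\varepsilon)^{j}\leq(1-\varepsilon)^{-j}$, rather than from the degraded non-nullness bound of Lemma \ref{enlema5}; the paper itself is no more explicit on this step than you are.
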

\begin{proof}
Since the processes {\bf X}, {\bf  Y} and ${\boldsymbol \xi}$ are
independent, we have  
\begin{eqnarray} 
\lefteqn{\left|\mathbb{P}\left({Z_{k}}^{k+j-1}=\omega^{j}_{1}\left.\right|X_{1}^{i}=x_{1}^{i}, \xi_{1}^{i}=\theta_{1}^{i}\right)-\mu_{Z}\left(\omega^{j}_{1}\right)\right|}\nonumber\\
&=&\left|\displaystyle\sum_{x_{k}^{k+j-1}}\mathbb{P}\left( X_{k}^{k+j-1}= x_{k}^{k+j-1}, Z_{k}^{k+j-1}=\omega_{1}^{j}|X_{1}^{i} = x_{1}^{i},\xi_{1}^{i}=\theta_{1}^{i}\right)-\mu_{Z}\left(\omega^{j}_{1}\right)\right| \nonumber\\
&=&\left|\displaystyle\sum_{x_{k}^{k+j-1}}p_{X}\left(x_{k}^{k+j-1}|x_{1}^{i}\right)\mathbb{P}\left(Z_{k}^{k+j-1}=\omega^{j}_{1}|X_{k}^{k+j-1}=x_{k}^{k+j-1}\right) -\mu_{Z}\left(\omega_{1}^{j}\right)\right|,\label{isk44}
\end{eqnarray}
for any $x^{i}_{1}, \theta_{1}^{i}\in {\cal{A}}_{1}^{i}$. The last two summations are over the set 
\begin{equation}\label{kkllljdjdjdd}
{\cal{C}^{*}}=\left\{x_{k}^{k+j-1}\in {\cal{A}}_{1}^{j}\left|\right.\left\{X_{k}^{k+j-1}=x_{k}^{k+j-1},Z_{k}^{k+j-1}=\omega^{j}_{1}\right\}\neq \emptyset\right\}.
\end{equation}
On the other hand, we can see that   
\begin{eqnarray*}
\mu_{Z}\left(\omega^{j}_{1}\right)&=&\displaystyle\sum_{x_{k}^{k+j-1}\in\ {\cal{C}^{*}}}\mathbb{P}\left(Z_{k}^{k+j-1}=\omega^{j}_{1}| X_{k}^{k+j-1}=x_{k}^{k+j-1}\right)\mathbb{P}\left( X_{k}^{k+j-1}= x_{k}^{k+j-1}\right). 
\end{eqnarray*}
Then, by using the last identity, (\ref{isk44}) and the Lemma $3.4$ of Galves and Leonardi (2008), we can write
$$
\left|\mathbb{P}\left({Z_{k}}^{k+j-1}=\omega^{j}_{1}\left.\right|X_{1}^{i}=x_{1}^{i}, \xi_{1}^{i}=\theta_{1}^{i}\right)-\mu_{Z}\left(\omega^{j}_{1}\right)\right| \leq \frac{\displaystyle\sum_{l = 0}^{j-1}\rho_{k-i+l,X}}{(1-\xi)^{j}},
$$
where the sequence  $\left(\rho_{l,X}\right)_{l \in \mathbb{N} }$ satisfies (\ref{galeo2008}).
\end{proof}

The proof of next result is a consequence of Proposition $4$ of Dedecker and Doukhan (2003).

\begin{lemma}\label{enlema10}
For any finite sequence $\omega$ and any $t>0$, we have
$$
\mathbb{P}\left(\left|N_{n}(\omega)-(n-l(\omega)+1)\mu_{Z}(\omega)\right|>t\right)\leq e^{\frac{1}{e}}\exp\left[-\frac{-t^{2}(1-\varepsilon)^{l(\omega)}}{4e[n-l(\omega)+1]l(\omega)\left(1+\frac{\beta_{X}}{\alpha_{X}}\right)}\right].
$$
Also, for any $a \in {\cal{A}}$ and any $n>\frac{|{\cal{A}}|+1}{tq(\omega)}+l(\omega)$, we  have
$$
\mathbb{P}\left(\left|{\hat{p}_(a|\omega)_n}-p_{Z}(a|\omega)\right|>t\right)\leq 3e^{\frac{1}{e}}\exp\left\{-(n-l(\omega))\frac{\left[t-\frac{3}{(n-l(\omega))\mu_{Z}(\omega)}\right]^{2}{\mu_{Z}(\omega)}^{2}(1-\varepsilon)^{l(\omega a)}}{64el(\omega a)\left(1+\frac{\beta_{X}}{\alpha_{X}}\right)}\right\}\cdot
$$
\end{lemma}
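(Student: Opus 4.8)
The statement splits into a concentration bound for the occurrence counts $N_{n}(\omega)$ and a concentration bound for the empirical kernel $\hat{p}_{Z}(a|\omega)_{n}$, and the natural route is to prove the first directly from the weak-dependence inequality of Dedecker and Doukhan (2003) and then to deduce the second from the first by an elementary ratio estimate. For the first inequality I would fix a finite word $\omega$ with $\ell=l(\omega)$ and write $N_{n}(\omega)=\sum_{t=0}^{n-\ell}\mathbf{1}_{\{Z_{t+1}^{t+\ell}=\omega\}}$. By stationarity each summand is a bounded functional of {\bf Z} with mean $\mu_{Z}(\omega)$, so $\mathbb{E}[N_{n}(\omega)]=(n-\ell+1)\mu_{Z}(\omega)$ and the quantity to be controlled is the deviation of a sum of bounded, stationary, weakly dependent variables from its mean. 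The plan is to invoke Proposition $4$ of Dedecker and Doukhan (2003), which bounds exactly such deviations by a sub-Gaussian tail governed by the weak-dependence coefficients of the process.

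The decisive input is the estimate of those coefficients for {\bf Z}, and this is precisely what Lemma \ref{lema88888} supplies: it controls the discrepancy between the conditional law of a future block $Z_{k}^{k+j-1}$ and the stationary law $\mu_{Z}$ by $(1-\varepsilon)^{-j}\sum_{l=0}^{j-1}\rho_{k-i+l,X}$, together with the summability bound (\ref{galeo2008}). Feeding this summable coefficient sequence into Proposition $4$ yields the first display, in which the factor $1+\beta_{X}/\alpha_{X}$ in the denominator of the exponent originates from (\ref{galeo2008}), the factor $(1-\varepsilon)^{\ell}$ from the decoupling cost $(1-\varepsilon)^{-j}$, and the prefactor $e^{1/e}$ and the constant $4e$ from the universal constants of the inequality.

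For the second inequality the plan is to reduce to the first. Writing $\mu_{Z}(\omega a)=\mu_{Z}(\omega)p_{Z}(a|\omega)$, one has
\[
\hat{p}_{Z}(a|\omega)_{n}-p_{Z}(a|\omega)=\frac{\left[N_{n}(\omega a)-p_{Z}(a|\omega)N_{n}(\omega\cdot)\right]+\left[1-|{\cal{A}}|\,p_{Z}(a|\omega)\right]}{N_{n}(\omega\cdot)+|{\cal{A}}|},
\]
and the first bracket equals $\left[N_{n}(\omega a)-\mathbb{E}N_{n}(\omega a)\right]-p_{Z}(a|\omega)\left[N_{n}(\omega\cdot)-\mathbb{E}N_{n}(\omega\cdot)\right]$. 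On the complement of the event that $N_{n}(\omega\cdot)$ falls well below its mean $(n-\ell)\mu_{Z}(\omega)$ the denominator is bounded below by a multiple of $(n-\ell)\mu_{Z}(\omega)$; combined with the deterministic bias term $1-|{\cal{A}}|p_{Z}(a|\omega)$ this produces the correction $3/[(n-\ell)\mu_{Z}(\omega)]$ inside the square and the requirement $n>(|{\cal{A}}|+1)/(t\mu_{Z}(\omega))+\ell$. A union bound over the three resulting count-deviation events, each estimated by the first part of the lemma applied to the word $\omega a$ (of length $l(\omega a)$) with threshold proportional to $t(n-\ell)\mu_{Z}(\omega)$, gives the factor $3e^{1/e}$ and accounts for the appearance of $\mu_{Z}(\omega)^{2}$, $(1-\varepsilon)^{l(\omega a)}$ and $l(\omega a)$ in the final exponent.

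The main obstacle is the first step: matching the abstract weak-dependence coefficients required by Proposition $4$ with the concrete decoupling estimate of Lemma \ref{lema88888}, and propagating the universal constants so that the stated exponent --- in particular the exact roles of $(1-\varepsilon)^{\ell}$ and $1+\beta_{X}/\alpha_{X}$ and the numerical constant $4e$ --- comes out correctly. Once the first inequality is established with these explicit constants, the second is routine bookkeeping, the only delicate points being the lower bound on the random denominator and the accounting of the deterministic $+1$ and $+|{\cal{A}}|$ corrections.
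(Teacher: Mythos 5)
Your proposal follows essentially the same route as the paper: the count concentration is obtained from Proposition 4 of Dedecker and Doukhan (2003), with the required weak-dependence coefficients $\left\|\mathbb{E}(U_{k}|{\cal{M}}_{t})\right\|_{\infty}$ identified exactly with the quantity controlled by Lemma \ref{lema88888} (whence the factors $(1-\varepsilon)^{l(\omega)}$ and $1+\beta_{X}/\alpha_{X}$), followed by the Dedecker--Prieur/Galves--Leonardi conversion of the moment bound into the sub-Gaussian tail with prefactor $e^{1/e}$. For the second inequality the paper compares $\hat{p}_{Z}(a|\omega)_n$ to the deterministic ratio $\frac{(n-l(\omega))\mu_{Z}(\omega a)+1}{(n-l(\omega))\mu_{Z}(\omega)+|{\cal{A}}|}$ rather than lower-bounding the random denominator as you do, but this is the same reduction to count deviations plus an $O\left(\frac{|{\cal{A}}|+1}{(n-l(\omega))\mu_{Z}(\omega)}\right)$ correction, with the same union bound producing the factor $3e^{1/e}$.
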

\begin{proof}
For the model (\ref{defproczt}) we have, for any finite sequence $\omega_{1}^{j} \in {\cal{A}}_{1}^{j}$, that
\begin{eqnarray*}
N_{n}(\omega_{1}^{j})&=& \sum_{t = 0}^{n-j} \displaystyle\prod_{i\in{\cal{I}}^{*}} 
\left[{\textbf{1}}_{\left\{X_{t+i}= \omega_{i}\right\}} {\textbf{1}}_{\left\{\xi_{t+i}= 1\right\}}\right] \displaystyle\prod_{s\in{\cal{S}}^{*}} \left[{\textbf{1}}_{\left\{X_{t+s}= \omega_{s}\right\}} + {\textbf{1}}_{\left\{X_{t+s}= 1\right\}} {\textbf{1}}_{\left\{\xi_{t+s}= 0\right\}} \right],
\end{eqnarray*}
where ${\cal{I}}^{*}=\left\{k \ : \ 1\leq k \leq j, \ \omega _{k}\neq 0\right\}$ and ${\cal{S}}^{*}=\left\{l\ : \ 1\leq l \leq j, \ \omega _{l} = 0\right\}$. 
Define the process {\bf U} by
$$
U_{t}=\displaystyle\prod_{i\in{\cal{I}}^{*}}\left[{\textbf{1}}_{\left\{X_{t+i}= \omega_{i}\right\}} {\textbf{1}}_{\left\{\xi_{t+i}= 1\right\}}\right] \displaystyle\prod_{s\in{\cal{S}}^{*}} \left[{\textbf{1}}_{\left\{X_{t+s}= \omega_{s}\right\}} + {\textbf{1}}_{\left\{X_{t+s}= 1\right\}} {\textbf{1}}_{\left\{\xi_{s+t}= 0\right\}} \right] - \mu_{Z}(\omega_{1}^{j}).
$$
Denote by ${\cal{M}}_{i}$ the $\sigma$-algebra generated by $U_{0},...,U_{i}$. Note that
$\mathbb{E}\left(U_{t}\right)=0$ and $\left\|U_{t}\right\|_{\frac{r}{2}}\leq 1$. Now, by applying Proposition $4$ of Dedecker and Doukhan (2003), we obtain
\begin{equation}\label{despro9}
\left\|N_{n}(\omega_{1}^{j})-(n-j+1)\mu_{Z}(\omega_{1}^{j})\right\|_{r}\leq \left(2r \sum_{t = 0}^{n-j}\underset{t\leq l \leq n-j}{\max} \left\|U_{t} \sum_{k = t}^{l}\mathbb{E}(U_{k}|{\cal{M}}_{t})\right\|_{r/2}\right)^{1/2},
\end{equation}
It follows by definition of $\left\|.\right\|_{\infty}$ that
\begin{eqnarray}
\left\|\mathbb{E}(U_{k}|{\cal{M}}_{t})\right\|_{\infty}
=\sup_{x_{1}^{t+j}, \theta_{1}^{t+j} }\left|\mathbb{P}(Z_{k+j}^{k+1}=\omega_{j}^{1}|X_{1}^{t+j}=x_{1}^{t+j}, \xi_{1}^{t+j}=\theta_{1}^{t+j}) - \mu_{Z}(\omega_{1}^{j})\right|,\label{nanananan}
\end{eqnarray}
where $x_{1}^{t+j}, \theta_{1}^{t+j} \in {\cal{A}}_{1}^{t+j}$. Plugging (\ref{nanananan}) into (\ref{despro9}) and applying Lemma \ref{lema88888}, we have
\begin{equation}\label{despro9123134}
\left\|N_{n}(\omega_{1}^{j})-(n-j+1)p_{Z}(\omega_{1}^{j})\right\|_{r} \leq \left[\frac{4r}{(1-\varepsilon)^{j}} (n-j+1)j  \left( 1 + \frac{\beta_{X}}{\alpha_{X}}\right)\right]^{1/2}.
\end{equation}
Now, let 
$$
B=\frac{4}{(1-\varepsilon)^{j}} (n-j+1)j  \left( 1 + \frac{\beta_{X}}{\alpha_{X}}\right).
$$
Then, as in  Dedecker e Prieur (2005) we obtain for any $t>0$ that
\begin{eqnarray}
\mathbb{P}\left( \left|N_{n}(\omega_{1}^{j})-(n-j+1)\mu_{Z}(\omega_{1}^{j})\right| > t \right) \leq \min\left(1,\left[\frac{rB}{t^{2}}\right]^{\frac{r}{2}}\right).\label{despro992}
\end{eqnarray}
Now, following Galves and Leonard (2008), one can infer that
\begin{equation}\label{despro9924447}
\min\left(1,\left[\frac{rB}{t^{2}}\right]^{\frac{r}{2}}\right)\leq \exp\left\{-\frac{t^{2}}{eB}+e^{-1}\right\}.
\end{equation}
Thus, by plugging (\ref{despro9924447}) into (\ref{despro992}) the first assertion of the lemma follows.
In order to prove the second one, we note that
\begin{equation}\label{ddespro91}
\left|p_{Z}(a|\omega)-\frac{(n-l(\omega))\mu_{Z}(\omega a)+1}{(n-l(\omega))\mu_{Z}(\omega)+|{\cal{A}}|}\right|\leq \frac{|{\cal{A}}|+1}{(n-l(\omega))\mu_{Z}(\omega)}\cdot
\end{equation}
Thus, we can write
\begin{eqnarray*}
\lefteqn{\mathbb{P}\left(\left|p_{Z}(a|\omega)-\hat{p}_{Z}(a|\omega)_n\right| > t\right)} \\
&\leq&\mathbb{P}\left(\left|\hat{p}_{Z}(a|\omega)_n-\frac{(n-l(\omega))\mu_{Z}(\omega a)+1}{(n-l(\omega))\mu_{Z}(\omega)+|{\cal{A}}|}\right| >t-\frac{|{\cal{A}}|+1}{(n-l(\omega))\mu_{Z}(\omega)}\right),
\end{eqnarray*}
for any $n\geq\frac{|{\cal{A}}|+1}{tq(\omega)}+l(\omega)$. Making $t'=t-\frac{|{\cal{A}}|+1}{(n-l(\omega))\mu_{Z}(\omega)}$, we can see that 
\begin{eqnarray*}
\lefteqn{\mathbb{P}\left(\left|\hat{p}_{Z}(a|\omega)_n-\frac{(n-l(\omega))\mu_{Z}(\omega a)+1}{(n-l(\omega))\mu_{Z}(\omega)+|{\cal{A}}|}\right| >t'\right)}\\
&\leq&\mathbb{P}\left(\left|N_{n}(\omega a)-(n-l(\omega))\mu_{Z}(\omega a)\right|>\frac{t'}{2}\left[(n-l(\omega))\mu_{Z}(\omega)+|{\cal{A}}|\right]\right)\\
&+& \sum_{b \in {\cal{A}}}\mathbb{P}\left(\left|N_{n}(\omega b)-(n-l(\omega))\mu_{Z}(\omega b)\right|>\frac{t'}{2|{\cal{A}}|}\left[(n-l(\omega))\mu_{Z}(\omega)+|{\cal{A}}|\right]\right)\cdot
\end{eqnarray*}
Therefore, by applying the first assertion of this lemma in the second expression of last inequality we conclude the proof of this lemma.
\end{proof}

\begin{lemma}\label{enlema11}
For any $\delta >2\left[1+\frac{4\beta_{X}}{\min(\alpha_{X} \beta_{X}^{*},1)}\right]\varepsilon$, any $\omega \in {\cal{T}}_{X}$, $u\omega \in \hat{{\cal{T}}}_{n}^{\delta, d}$ and 
$$
n>\frac{6}{\left\{\delta-2\varepsilon\left[1+\frac{4\beta_{X}}{\min{(1,(1+\varepsilon)\alpha_{X}\beta_{X}^{*})}}\right]\right\}\alpha_{X}^{d}(1-\varepsilon)^{d}}+d
$$
we have 
$$
\mathbb{P}(\Delta_{n}(u\omega)>\delta)\leq 12e^{\frac{1}{e}}\exp{\left\{-(n-d)\frac{\left[\frac{\delta}{2}-\bar{k}\right]^{2}{\alpha_{X}^{2d}}(1-\varepsilon)^{3d+1}}{ 64e(d+1)  \left( 1 + \frac{\beta_{X}}{\alpha_{X}}\right)}\right\}},
$$
where 
\begin{equation}\label{kbarra}
\bar{k}=\varepsilon\left[1+\frac{4\beta}{\min{(1,(1+\varepsilon)\alpha_{X}\beta_{X}^{*})}}\right]+\frac{3}{(n-d)\alpha_{X}^{d}(1-\varepsilon)^{d}}\cdot
\end{equation}
\end{lemma}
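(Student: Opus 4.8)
The plan is to decompose the event $\{\Delta_n(u\omega)>\delta\}$ into deviations of the empirical kernels from their true values and then invoke the concentration estimate of Lemma~\ref{enlema10}. Throughout I write $C_\varepsilon=1+\frac{4\beta_X}{\min(1,(1+\varepsilon)\alpha_X\beta_X^{*})}$, so that Theorem~\ref{enteor1} reads $|p_Z(a|v)-p_X(a|v)|\le\varepsilon C_\varepsilon$ for every finite string $v$ and every $a\in{\cal A}$. The conceptually central step is to use that $\omega\in{\cal T}_X$ is a context: since $l(u)\ge1$, both $u\omega$ and $\mbox{suf}(u\omega)$ carry $\omega$ as a suffix, so Definition~\ref{sufixo111} forces $p_X(a|u\omega)=p_X(a|\mbox{suf}(u\omega))=p_X(a|\omega)$ for every $a$. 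Feeding this exact cancellation through the triangle inequality together with Theorem~\ref{enteor1} yields $|p_Z(a|u\omega)-p_Z(a|\mbox{suf}(u\omega))|\le 2\varepsilon C_\varepsilon$; this is the only place where the contamination bound and the context hypothesis enter.

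Next I would write, for each $a$,
$$\hat p_Z(a|u\omega)_n-\hat p_Z(a|\mbox{suf}(u\omega))_n=\bigl[\hat p_Z(a|u\omega)_n-p_Z(a|u\omega)\bigr]+\bigl[p_Z(a|u\omega)-p_Z(a|\mbox{suf}(u\omega))\bigr]+\bigl[p_Z(a|\mbox{suf}(u\omega))-\hat p_Z(a|\mbox{suf}(u\omega))_n\bigr].$$
Because the middle bracket is at most $2\varepsilon C_\varepsilon$ in absolute value and $\delta>2\varepsilon C_\varepsilon$ by hypothesis, the event $\{\Delta_n(u\omega)>\delta\}$ forces, for some $a\in\{0,1\}$, at least one of the two estimation errors to exceed $\delta/2-\varepsilon C_\varepsilon$. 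A union bound over the two symbols and the two kernels then dominates $\mathbb{P}(\Delta_n(u\omega)>\delta)$ by four terms of the form $\mathbb{P}(|\hat p_Z(a|v)_n-p_Z(a|v)|>\delta/2-\varepsilon C_\varepsilon)$, with $v\in\{u\omega,\mbox{suf}(u\omega)\}$.

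Finally I would bound each of these four terms with the second assertion of Lemma~\ref{enlema10} at $t=\delta/2-\varepsilon C_\varepsilon$. The routine inputs are $l(v)\le d$, $l(va)\le d+1$, and the lower bound $\mu_Z(v)\ge[(1-\varepsilon)\alpha_X]^{l(v)}\ge\alpha_X^{d}(1-\varepsilon)^{d}$, the last coming from the fact that every $Z$-transition probability is at least $(1-\varepsilon)\alpha_X$ (Lemma~\ref{enlema5}). These replacements convert the threshold $t-3/[(n-l(v))\mu_Z(v)]$ into $\delta/2-\bar k$, the prefactor $\mu_Z(v)^2(1-\varepsilon)^{l(va)}$ into $\alpha_X^{2d}(1-\varepsilon)^{3d+1}$, and the denominator into $64e(d+1)(1+\beta_X/\alpha_X)$; summing the four copies of $3e^{1/e}\exp\{\cdots\}$ gives the stated prefactor $12e^{1/e}$. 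The hypothesis on $n$ is precisely $n>3/[t\,\mu_Z(v)]+l(v)$ after the same worst-casing, which both licenses the application of Lemma~\ref{enlema10} and guarantees $\delta/2-\bar k>0$, so the exponent is genuinely negative.

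The main obstacle is bookkeeping rather than conceptual: one must check that each length and each stationary measure $\mu_Z(v)$ occurring in Lemma~\ref{enlema10} is replaced by its extreme admissible value in the direction that \emph{enlarges} the probability, so that a single exponent serves uniformly over all admissible $u$ and both choices of $v$. One must also confirm that the factor $\varepsilon C_\varepsilon$ appearing here is compatible with the slightly larger constant $1+\frac{4\beta_X}{\min(\alpha_X\beta_X^{*},1)}$ used in the hypothesis on $\delta$, which dominates $C_\varepsilon$ and hence keeps $\delta/2-\varepsilon C_\varepsilon$ strictly positive.
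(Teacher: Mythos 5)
Your proposal is correct and follows essentially the same route as the paper: the context property of $\omega$ gives the exact cancellation $p_X(a|u\omega)=p_X(a|\mathrm{suf}(u\omega))$, Theorem \ref{enteor1} controls the two contamination terms by $\varepsilon C_\varepsilon$ each, a union bound over the two symbols and two kernels yields four deviation probabilities at threshold $\delta/2-\varepsilon C_\varepsilon$, and Lemmas \ref{enlema5} and \ref{enlema10} with the worst-case substitutions $l(v)\le d$, $\mu_Z(v)\ge\alpha_X^d(1-\varepsilon)^d$ produce the stated exponent and the prefactor $4\cdot 3e^{1/e}=12e^{1/e}$. Your explicit check that the hypothesis constant $2[1+4\beta_X/\min(\alpha_X\beta_X^*,1)]$ dominates $2C_\varepsilon$, and that the condition on $n$ makes $\delta/2-\bar k>0$, is bookkeeping the paper leaves implicit but is consistent with it.
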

\begin{proof}
By the suffix property (item $(2)$ of the Definition \ref{sufixo111}) if $\omega \in {\cal{T}}_{X}$ then 
\begin{equation}
\label{eee1}
p_{X}(a|u\omega) = p_{X}(a|\mbox{suf}(u\omega)), 
\end{equation}
for any finite sequence $u$ and any symbol $a \in {\cal{A}}$. Thus,
from (\ref{eee1}), by triangle inequality, we have
\begin{eqnarray*}
\lefteqn{\left|\hat{p}_Z(a|u\omega)_n-\hat{p}_Z(a|\mbox{suf}(u\omega)_n)\right|\leq\left|\hat{p}_Z(a|u\omega)_n-p_{Z}(a|u\omega)\right| + \left|p_{X}(a|u\omega)-p_{Z}(a|u\omega)\right|} \nonumber\\
&+&\left|p_{X}(a|\mbox{suf}(u\omega))-p_{Z}(a|\mbox{suf}(u\omega))\right|
+\left|p_{Z}(a|\mbox{suf}(u\omega))-\hat{p}_{Z}(a|\mbox{suf}(u\omega))_n\right|\cdot
\end{eqnarray*}
Then, considering Theorem \ref{enteor1}, the following inequality holds
\begin{eqnarray*}
\lefteqn{\mathbb{P}\left(\underset{a\in {\cal{A}}}{\max }\left|\hat{p}_Z(a|u\omega)_n-\hat{p}_Z(a|\mbox{suf}(u\omega))_n\right|>\delta\right)}\\
&\leq& \sum_{a\in {\cal{A}}}\left[\mathbb{P}\left(\left|\hat{p}_Z(a|u\omega)_n-p_{Z}(a|\mbox{suf}(u\omega))\right|>\frac{\delta}{2}-\varepsilon\left[1+\frac{4\beta_{X}}{\min{(1,(1+\varepsilon)\alpha_{X}\beta_{X}^{*})}}\right]\right)\right.\nonumber\\
&+&\left.\mathbb{P}\left(\left|p_{Z}(a|\mbox{suf}(u\omega))-\hat{p}_{Z}(a|\mbox{suf}(u\omega))_n\right|>\frac{\delta}{2}-\varepsilon\left[1+\frac{4\beta_{X}}{\min{(1,(1+\varepsilon)\alpha_{X}\beta_{X}^{*})}}\right]\right)\right]\cdot\nonumber\\
\end{eqnarray*}
Therefore, by using Lemmas \ref{enlema5} and \ref{enlema10} to bound from above the second expression of last inequality, the lemma follows.
\end{proof}

\begin{lemma}\label{enlema12345}
There exists $d$ such that, for any 
$$\delta <D_{d} - 2\varepsilon\left[1 + \frac{4\beta_{X}}{\min{(1,(1+\varepsilon)\alpha_{X}\beta_{X}^{*})}}\right],
$$ 
any $\omega\in\hat{{\cal{T}}}_{n}^{\delta,d}$, with $l(\omega)<K,$ $\omega \notin {\cal{T}}_{X}$ and any
$$
n>\frac{6}{\left\{D_{d} - \delta -2\varepsilon\left[1 + \frac{4\beta_{X}}{\min{(1,(1+\varepsilon)\alpha_{X}\beta_{X}^{*})}}\right]\right\}\alpha_{X}^{d}(1-\varepsilon)^{d}}+d,
$$
we have
$$
\mathbb{P}\left(\bigcap_{u\omega \in {\cal{T}}_{X}\left.\right|_{d}}\left\{\Delta_{n}(u\omega)\leq \delta\right\}\right)\leq 6e^{\frac{1}{e}}\exp\left[-(n-d)\frac{\left[\frac{D_{d} - \delta}{2}-\bar{k}\right]^{2}{\alpha_{X}^{2d}(1-\varepsilon)^{3d+1}}}{64e(d+1)\left(1+\frac{\beta_{X}}{\alpha_{X}}\right)}\right],
$$
where $\bar{k}$ is as in (\ref{kbarra}). 
\end{lemma}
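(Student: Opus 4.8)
The plan is to use the choice of $d$ in part (2) of Theorem \ref{enteorema2} to single out, for the given $\omega\notin{\cal T}_{X}$ with $l(\omega)<K$, one extension of $\omega$ whose \emph{true} transition probabilities are separated from those of its suffix by at least $D_{d}$, and then to argue that on the event $\bigcap_{u\omega\in{\cal T}_{X}|_{d}}\{\Delta_{n}(u\omega)\le\delta\}$ this separation must be absorbed by the empirical fluctuations of $\hat{p}_{Z}$, which is exponentially unlikely by Lemma \ref{enlema10}. This is the underestimation counterpart of Lemma \ref{enlema11}, with the roles of the true gap $D_{d}$ and the threshold $\delta$ interchanged.

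First I would perform the reduction. Writing $k^{*}=\min\{k:\exists\,\omega'\in{\cal C}_{k}\text{ with }\omega\prec\omega'\}$, the definition of $d$ forces $k^{*}\le d$, so there is an extension $v\omega$ of $\omega$ (hence $\omega\prec v\omega$) with $v\omega\in{\cal C}_{k^{*}}$; using ${\cal C}_{k^{*}}\subseteq{\cal C}_{d}$ and the resulting monotonicity $D_{d}\le D_{k^{*}}$, we may take $v\omega\in{\cal C}_{d}\subseteq{\cal T}_{X}|_{d}$. By the definition (\ref{defDk}) of $D_{d}$ there is then a symbol $a$ with $|p_{X}(a|v\omega)-p_{X}(a|\mbox{suf}(v\omega))|\ge D_{d}$. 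Since $v\omega$ is of the form $u\omega$ and lies in ${\cal T}_{X}|_{d}$, the event $\{\Delta_{n}(v\omega)\le\delta\}$ is one of the factors of the intersection, so the whole intersection is contained in it; in particular $|\hat{p}_{Z}(a|v\omega)_{n}-\hat{p}_{Z}(a|\mbox{suf}(v\omega))_{n}|\le\delta$ on that event.

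Next I would insert $p_{Z}$ and $p_{X}$ by the triangle inequality,
\begin{align*}
D_{d} &\le |p_{X}(a|v\omega)-p_{Z}(a|v\omega)| + |p_{Z}(a|v\omega)-\hat{p}_{Z}(a|v\omega)_{n}| + |\hat{p}_{Z}(a|v\omega)_{n}-\hat{p}_{Z}(a|\mbox{suf}(v\omega))_{n}| \\
&\quad + |\hat{p}_{Z}(a|\mbox{suf}(v\omega))_{n}-p_{Z}(a|\mbox{suf}(v\omega))| + |p_{Z}(a|\mbox{suf}(v\omega))-p_{X}(a|\mbox{suf}(v\omega))|,
\end{align*}
bounding the two extreme terms by $\varepsilon[1+4\beta_{X}/\min(1,(1+\varepsilon)\alpha_{X}\beta_{X}^{*})]$ through Theorem \ref{enteor1} and the middle term by $\delta$. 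Rearranging shows that on the intersection at least one of the two empirical deviations $|\hat{p}_{Z}(a|v\omega)_{n}-p_{Z}(a|v\omega)|$, $|\hat{p}_{Z}(a|\mbox{suf}(v\omega))_{n}-p_{Z}(a|\mbox{suf}(v\omega))|$ exceeds $t:=\tfrac{D_{d}-\delta}{2}-\varepsilon[1+4\beta_{X}/\min(1,(1+\varepsilon)\alpha_{X}\beta_{X}^{*})]$. A union bound over these two events, followed by the second assertion of Lemma \ref{enlema10} with this threshold $t$, yields the prefactor $6e^{1/e}$ (two terms, $3e^{1/e}$ each, with no extra factor $|{\cal A}|$ since $a$ is fixed here). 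From the kernel lower bound (\ref{eq2.1}) one has $\mu_{Z}(\cdot)\ge(\alpha_{X}(1-\varepsilon))^{d}$ for every cylinder of length at most $d$, so the correction $3/((n-l)\mu_{Z})$ in Lemma \ref{enlema10} is at most $3/((n-d)\alpha_{X}^{d}(1-\varepsilon)^{d})$, making $t$ minus the correction equal to $\tfrac{D_{d}-\delta}{2}-\bar{k}$ with $\bar{k}$ as in (\ref{kbarra}); meanwhile $\mu_{Z}^{2}(1-\varepsilon)^{l(\cdot a)}\ge\alpha_{X}^{2d}(1-\varepsilon)^{3d+1}$ and $l(\cdot a)\le d+1$ reproduce the exponent. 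The stated lower bound on $n$ is exactly what makes $\tfrac{D_{d}-\delta}{2}-\bar{k}$ positive, so the bound is nontrivial.

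The main obstacle is the reduction step: one must verify, purely from the combinatorics of the context tree, that the choice of $d$ produces a witnessing extension $v\omega$ that lies inside the index set $\{u\omega\in{\cal T}_{X}|_{d}\}$ of the intersection \emph{and} carries the full separation $D_{d}$ (this is where $\omega\prec v\omega$, the inclusion ${\cal C}_{k^{*}}\subseteq{\cal C}_{d}$, and $D_{d}\le D_{k^{*}}$ are used). Once the correct $v\omega$ is isolated, the remainder is the same triangle-inequality-plus-concentration bookkeeping as in Lemma \ref{enlema11}, and the only arithmetic to check is that the cylinder lower bound and the length estimates $l(v\omega)\le d$, $l(v\omega)+1\le d+1$ match the constants in the announced exponent.
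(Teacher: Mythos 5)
Your proposal follows essentially the same route as the paper: isolate a witnessing extension of $\omega$ in ${\cal C}_{d}\subseteq{\cal T}_{X}|_{d}$ carrying the separation $D_{d}$, lower-bound $\Delta_{n}$ of that string via the reverse triangle inequality with the two contamination terms controlled by Theorem \ref{enteor1}, and then apply the second assertion of Lemma \ref{enlema10} to the two remaining empirical deviations at threshold $\tfrac{D_{d}-\delta}{2}-\varepsilon c_{1}/2$, which reproduces the prefactor $6e^{1/e}$ and the exponent with $\bar{k}$. Your explicit discussion of why the witnessing extension lies in the index set (via $k^{*}\le d$ and the monotonicity of $D_{k}$) is actually more careful than the paper, which simply asserts the existence of such a $\bar{u\omega}$; otherwise the bookkeeping is identical.
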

\begin{proof}
Similarly to Collet,  Galves  and Leonardi (2008), we take
$$
d=\underset{u\notin {\cal{T}}_{X}, \ l(u)<K}{\max}\min{\left\{k:\mbox{ existe } \omega \in C_{k} \ \mbox{com}  \ \omega\succ u\right\}}.
$$
Then, from the definitions of $C_{d}$ and $d$, there exists $\bar{u\omega}\in {\cal{T}}_{X}\left.\right|_{d}$, such that 
$
p_{X}(a|\bar{u\omega})\neq p_{X}(a|\mbox{suf}(\bar{u\omega}))
$ for some $a \in {\cal{A}}$. Now, we can see that
\begin{eqnarray*}
\lefteqn{\left|\hat{p}_Z(a|\bar{u\omega})_n-\hat{p}_Z(a|\mbox{suf}(\bar{u\omega}))_n\right| \geq \left|p_{X}(a|\bar{u\omega})-p_{X}(a|\mbox{suf}(\bar{u\omega}))\right|}\\
&-&\left|\hat{p}_Z(a|\mbox{suf}(\bar{u\omega}))_n-p_{Z}(a|\mbox{suf}(\bar{u\omega}))\right|-\left|p_{Z}(a|\mbox{suf}(\bar{u\omega}))-p_{X}(a|\mbox{suf}(\bar{u\omega}))\right|\\
&-&\left|p_{Z}(a|\bar{u\omega})-p_{X}(a|\bar{u\omega})\right|-\left|\hat{p}_Z(a|\bar{u\omega})_n-p_{Z}(a|\bar{u\omega})\right|
\end{eqnarray*}
for all $a \in {\cal{A}}$. Thus, if $\bar{u\omega}\in C_{d}$, then
\begin{equation*}
\underset{a\in {\cal{A}}}{\max}\left|p_{X}(a|\bar{u\omega})-p_{X}(a|\mbox{suf}(\bar{u\omega}))\right|\geq D_{d}.
\end{equation*}
From Theorem \ref{enteor1}, we can write
\begin{eqnarray*}
\Delta_{n}(\bar{u\omega})&\geq& D_{d} - 2\varepsilon\left[1 + \frac{4\beta_{X}}{\min{(1,(1+\varepsilon)\alpha_{X}\beta_{X}^{*})}}\right]\\
&-&\underset{a\in {\cal{A}}}{\max}\left|\hat{p}_Z(a|\bar{u\omega})_n-p_{Z}(a|\bar{u\omega})\right|-\underset{a\in {\cal{A}}}{\max}\left|\hat{p}_Z(a|\mbox{suf}(\bar{u\omega}))_n-p_{Z}(a|\mbox{suf}(\bar{u\omega}))\right|\nonumber\cdot
\end{eqnarray*}
From the last inequality and Lemma \ref{enlema10} we have
$$
\mathbb{P}\left(\bigcap_{u\omega \in {\cal{T}}_{X}\left.\right|_{d}}\left\{\Delta_{n}(u\omega)\leq \delta\right\}\right)\leq  6e^{\frac{1}{e}}\exp\left[-(n-d)\frac{c^{2}{\alpha_{X}^{2d}(1-\varepsilon)^{3d+1}}}{64e(d+1)\left(1+\frac{\beta_{X}}{\alpha_{X}}\right)}\right],
$$
where
$
c=\frac{D_{d} - \delta}{2} -\varepsilon\left[1 + \frac{4\beta_{X}}{\min{(1,(1+\varepsilon)\alpha_{X}\beta_{X}^{*})}}\right]-\frac{3}{(n-d)\alpha_{X}^{d}(1-\varepsilon)^{d}}\cdot
$
This concludes the proof of this lemma.
\end{proof}

\begin{pft1}
{\rm 
Following B\"uhlmann and Wyner (1999), we define
$$
O_{n,\delta}^{K,d}=\bigcup_{\underset{l(\omega)<K}{\omega \in {\cal{T}}_{X}}}\bigcup_{u\omega \in \hat{{\cal{T}}}_{n}^{\delta, d}}\left\{\Delta_{n}(u\omega)>\delta\right\},
$$
and
$$
U_{n,\delta}^{K,d}=\bigcup_{\underset{l(\omega)<K}{\omega \in \hat{{\cal{T}}}_{n}^{\delta,d}}}\bigcap_{u\omega \in {\cal{T}}_{X}\left.\right|_{d}}\left\{\Delta_{n}(u\omega)\leq\delta\right\}.
$$
Then, if $d<n$, one can see that
$$
\left\{\hat{{\cal{T}}}_{n}^{\delta,d}\left.\right|_{K}\neq {\cal{T}}_{X}\left.\right|_{K}\right\}\subseteq O_{n,\delta}^{K,d}\cup U_{n,\delta}^{K,d}.
$$
Thus,
$$
\mathbb{P}\left(\hat{{\cal{T}}}_{n}^{\delta,d}\left.\right|_{K}\neq {\cal{T}}_{X}\left.\right|_{K}\right)\leq\displaystyle\sum_{\underset{l(\omega)<K}{\omega \in {\cal{T}}_{X}}}\displaystyle\sum_{u\omega \in \hat{{\cal{T}}}_{n}^{\delta, d}}\mathbb{P}\left(\Delta_{n}(u\omega)>\delta\right)+\displaystyle\sum_{\underset{l(\omega)<K}{\omega \in \hat{{\cal{T}}}_{n}^{\delta,d}}}\mathbb{P}\left(\bigcap_{u\omega \in {\cal{T}}_{X}\left.\right|_{d}}\left\{\Delta_{n}(u\omega)\leq\delta\right\}\right).
$$
Therefore, from Definition \ref{estarvcont}, Lemmas \ref{enlema11} and \ref{enlema12345}, we have
$$
\mathbb{P}\left(\hat{{\cal{T}}}_{n}^{\delta,d}\left.\right|_{K}\neq {\cal{T}}_{X}\left.\right|_{K}\right)\leq 2^{d}12e^{\frac{1}{e}}\exp\left[-(n-d)\frac{\left[\min\left(D_{d} - \delta,\delta\right)-2\bar{k}\right]^{2}{\alpha_{X}^{2d}(1-\varepsilon)^{3d+1}}}{256e(d+1)\left(1+\frac{\beta_{X}}{\alpha_{X}}\right)}\right],
$$
where $\bar{k}$ is as in (\ref{kbarra}).
It completes the proof of Theorem \ref{enteorema2}.}
\end{pft1}

\begin{pfc}

By the Theorem \ref{enteorema2}, we have
\begin{eqnarray*}
\displaystyle\sum_{n}\mathbb{P}\left(\hat{{\cal{T}}}_{n}^{\delta,d}\left.\right|_{K}\neq {\cal{T}}_{X}\left.\right|_{K}\right)\leq \displaystyle\sum_{n}c_{2}e^{\left[-(n-d)c_{3}\right]}<\infty,
\end{eqnarray*}
for suitable choices of $d$ and $\delta$. Thus, by the Borel-Cantelli Lemma, we obtain
$$
\mathbb{P}\left(\left[\hat{{\cal{T}}}_{n}^{\delta,d}\left.\right|_{K}\neq {\cal{T}}_{X}\left.\right|_{K} i.o.\right]\right)=0.
$$
\end{pfc}


\subsection{Proof of Theorem \ref{enteor31}}

The proof of Theorem \ref{enteor31} is based on three preparatory lemmas.

\begin{lemma}\label{3lema4}
  For any $\varepsilon \in (0,1)$, any $k>j\geq 0$, any
  $\omega_{-\infty}^{-a} \in {\cal A}_{-\infty}^{-1}$, $\omega_0 \in
  {\cal A}$ and any $a,b \in {\cal{A}}$, we have
$$
\left|\mathbb{P}\left(X_{0}=\omega_{0}|X_{-j}^{-1}=\omega_{-j}^{-1}, X_{-j-1}=a, Z_{-j-1}=b, Z_{-k}^{-j-2}=\omega_{-k}^{-j-2}\right)-p_{X}(\omega_{0}|\omega_{-\infty}^{-1}) \right|\leq\beta_{j,X}.
$$
\end{lemma}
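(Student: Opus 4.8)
Lemma 3.11 (labeled `3lema4`) is the exact analogue, for the *process contamination* model (\ref{lulu1}), of Lemma 3.1 (`lema4`) proved for the zero-inflated model. Both assert the same bound $\beta_{j,X}$ on the deviation between a conditional probability of $X_0$ given a mixed $(X,Z)$-past and the true kernel $p_X(\omega_0\mid\omega_{-\infty}^{-1})$.

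**Why the proof should transfer almost verbatim:**

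The proof of `lema4` used only two ingredients: (i) an identity expressing the conditional probability as a ratio of sums over the hidden $X$-past $u_{-k}^{-j-2}$, weighted by $\mathbb{P}(Z\mid X)$; and (ii) the continuity bound (\ref{eq31})–(\ref{eq32}), which is a property of the $X$-process *alone* (it does not reference the contamination mechanism). So the only place where the contamination model enters is the conditional law $\mathbb{P}(Z_{-k}^{-j-1}=\cdots\mid X_{-k}^{-j-1}=\cdots)$. I need to check that this factor still factors correctly out of the ratio.

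**My plan for the proof:**

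First I would write the Bayes-type identity expressing
\[
\mathbb{P}\!\left(X_{0}=\omega_{0}\,\middle|\,X_{-j}^{-1}=\omega_{-j}^{-1},\,X_{-j-1}=a,\,Z_{-j-1}=b,\,Z_{-k}^{-j-2}=\omega_{-k}^{-j-2}\right)
\]
as a ratio whose numerator sums $p_X(u_{-k}^{-j-2}\,a\,\omega_{-j}^{-1}\,\omega_0)\,\mathbb{P}(Z_{-k}^{-j-1}=\omega_{-k}^{-j-2}b\mid X_{-k}^{-j-1}=u_{-k}^{-j-2}a)$ over admissible hidden pasts $u_{-k}^{-j-2}$, and whose denominator is the same expression without the final $\omega_0$. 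This is word-for-word the identity (\ref{conl1}), and it is justified here because $\{X,Y,\xi\}$ are independent, so $Z_t$ depends on the past only through $X_t$ and $Y_t$ at the *same* time; the conditional law $\mathbb{P}(Z\mid X)$ therefore appears identically in numerator and denominator. Second, I would invoke the continuity estimate (\ref{eq31}): since $p_X(\omega_0\mid u_{-k}^{-j-2}a\,\omega_{-j}^{-1})$ is sandwiched between the inf and sup of $p_X(\omega_0\mid v_{-\infty}^{-j-1}\omega_{-j}^{-1})$, and the $X$-process has summable continuity rate, inequality (\ref{eq32}) gives
\[
p_X(\omega_0\mid\omega_{-\infty}^{-1})-\beta_{j,X}\le p_X\!\left(\omega_0\mid u_{-k}^{-j-2}a\,\omega_{-j}^{-1}\right)\le p_X(\omega_0\mid\omega_{-\infty}^{-1})+\beta_{j,X},
\]
uniformly in the hidden past. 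Third, since the ratio is a convex combination (over $u_{-k}^{-j-2}$) of the factors $p_X(\omega_0\mid u_{-k}^{-j-2}a\,\omega_{-j}^{-1})$ — the $\mathbb{P}(Z\mid X)$ weights and the denominator masses together form a probability measure on admissible pasts — the same two-sided bound is inherited by the full conditional probability, yielding the claimed $\beta_{j,X}$ deviation.

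**The main obstacle:**

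The substantive point is verifying that the $Z$-emission factor is genuinely common to numerator and denominator so that it cancels in the convex-combination argument. In the zero-inflated model this was transparent because $Z_t=X_t\xi_t$; in the process model $Z_t$ equals $X_t$ or $Y_t$ according to $\xi_t$, so I must confirm that conditioning on the event $\{Z_{-j-1}=b\}$ together with the future $\{Z_{-k}^{-j-2}=\omega_{-k}^{-j-2}\}$ still yields an emission kernel $\mathbb{P}(Z\mid X)$ that does not couple different hidden pasts $u_{-k}^{-j-2}$ in a way that breaks the convex-combination structure. Because $\mathbf{Y}$ and $\boldsymbol{\xi}$ are independent of $\mathbf{X}$ and the emission at each time depends only on the contemporaneous $(X_t,Y_t,\xi_t)$, this conditional law factorizes over time and is independent of $\omega_0$; hence the cancellation goes through exactly as before, and no new estimate beyond the continuity rate of $\mathbf{X}$ is needed.
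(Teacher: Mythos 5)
Your proposal matches the paper's own proof of this lemma: the paper writes exactly the ratio identity you describe (the analogue of (\ref{conl1}), justified by the independence of $\boldsymbol{\xi}$, {\bf Y} and {\bf X}) and then applies the continuity sandwich (\ref{eq32}) to conclude via the convex-combination argument. One small inaccuracy worth noting: the emission kernel $\mathbb{P}(Z_{-k}^{-j-1}=\cdot\mid X_{-k}^{-j-1}=\cdot)$ does \emph{not} factorize over time here (the $Y_t$ are dependent across time), but this is immaterial since all your argument requires is that this factor depends only on the hidden past and not on $\omega_0$, which is true.
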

\begin{proof}
For any $j\geq 0$, considering the independence of the processes ${\boldsymbol \xi}$, {\bf
  Y} and {\bf X}, we have
\begin{eqnarray*}
\lefteqn{\mathbb{P}\left(X_{0}=\omega_{0}|X_{-j}^{-1}=\omega_{-j}^{-1}, X_{-j-1}=a, Z_{-j-1}=b, Z_{-k}^{-j-2}=\omega_{-k}^{-j-2}\right)}
\\&=&\frac{\displaystyle\sum_{u_{-k}^{-j-2}}p_{X}\left(u_{-k}^{-j-2}a\omega_{-j}^{-1}\omega_{0}\right)\mathbb{P}\left(Z_{-k}^{-j-1}=\omega_{-k}^{-j-2}b|X_{-k}^{-j-1}=u_{-k}^{-j-2}a\right)}{\displaystyle\sum_{u_{-k}^{-j-2}}p_{X}\left(u_{-k}^{-j-2}a\omega_{-j}^{-1}\right)\mathbb{P}\left(Z_{-k}^{-j-1}=\omega_{-k}^{-j-2}b|X_{-k}^{-j-1}=u_{-k}^{-j-2}a\right)}\cdot
\end{eqnarray*}
Since
\begin{equation*}
p_{X}\left(\omega_{0}|\omega_{-\infty}^{-1}\right)-\beta_{j,X}\leq p_{X}\left(\omega_{0}|u_{-k}^{-j-2}a\omega_{-j}^{-1}\right)\leq p_{X}\left(\omega_{0}|\omega_{-\infty}^{-1}\right)+\beta_{j,X},
\end{equation*}
the assertion of this lemma follows directly.
\end{proof}



\begin{lemma}\label{3enlema5}
For any $\varepsilon \in (0,1)$, any $k\geq 0$ and any $\omega_{-k}^{0}$, we have
\begin{equation}\label{lkjhhgz00}
p_{Z}\left(\omega_{0}|\omega_{-k}^{-1}\right)\geq \alpha_{\min},
\end{equation}
\begin{equation}\label{lkjhhgz}
\mathbb{P}\left(X_{0}=\omega_{0}|Z_{-k}^{-1}=\omega_{-k}^{-1}\right)\geq\alpha_{X},
\end{equation}
\begin{equation}\label{ymod22}
\mathbb{P}\left(Y_{0}=\omega_{0}|Z_{-k}^{-1}=\omega_{-k}^{-1}\right)\geq\alpha_{Y},
\end{equation}
where $\alpha_{\min}=\min{\{\alpha_{X},\alpha_{Y}\}}$. Moreover, for any $0\leq j\leq k$, we have
\begin{equation}\label{xmod2}
\mathbb{P}\left(X_{-j-1}=\omega_{-j-1}|X_{-j}^{-1}=\omega_{-j}^{-1}, Z_{-k}^{-j-2}=\omega_{-k}^{-j-2}\right)\geq \alpha_{X}\beta_{X}^{*},
\end{equation}
where $\beta_{X}^{*}=\prod_{k=0}^{+\infty}(1-\beta_{k, X})>0$.
\end{lemma}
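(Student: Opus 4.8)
The plan is to prove the four bounds in the order (\ref{lkjhhgz}), (\ref{ymod22}), (\ref{lkjhhgz00}), (\ref{xmod2}), following the strategy of Lemma \ref{enlema5} but now exploiting the non-nullness of \emph{both} driving processes. The structural fact I would use throughout is that, by the construction (\ref{lulu1}) together with the mutual independence of $\mathbf{X}$, $\mathbf{Y}$ and $\boldsymbol{\xi}$, the observation $Z_{-k}^{-1}$ is a measurable function of $(X_{-k}^{-1},Y_{-k}^{-1},\xi_{-k}^{-1})$ alone.

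First I would establish (\ref{lkjhhgz}). Decomposing over the value of $X_{-k}^{-1}$,
\[
\mathbb{P}(X_{0}=\omega_{0}\mid Z_{-k}^{-1}=\omega_{-k}^{-1})=\sum_{v}\mathbb{P}(X_{0}=\omega_{0}\mid X_{-k}^{-1}=v)\,\mathbb{P}(X_{-k}^{-1}=v\mid Z_{-k}^{-1}=\omega_{-k}^{-1}),
\]
where the reduction $\mathbb{P}(X_{0}=\omega_{0}\mid X_{-k}^{-1}=v,Z_{-k}^{-1}=\omega_{-k}^{-1})=\mathbb{P}(X_{0}=\omega_{0}\mid X_{-k}^{-1}=v)$ holds because, given $X_{-k}^{-1}$, the additional information in $Z_{-k}^{-1}$ concerns only $(Y_{-k}^{-1},\xi_{-k}^{-1})$, which are independent of $\mathbf{X}$. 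Each inner term is an average over compatible infinite pasts of $p_{X}(\omega_{0}\mid\,\cdot\,)\ge\alpha_{X}$, hence is $\ge\alpha_{X}$ by non-nullness (Definition \ref{nonnull}); as the weights sum to one, (\ref{lkjhhgz}) follows. Interchanging the roles of $\mathbf{X}$ and $\mathbf{Y}$ gives (\ref{ymod22}) with lower bound $\alpha_{Y}$.

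For (\ref{lkjhhgz00}) I would split on the coin $\xi_{0}$. Since $\{Z_{0}=\omega_{0},\xi_{0}=1\}=\{X_{0}=\omega_{0},\xi_{0}=1\}$ and $\{Z_{0}=\omega_{0},\xi_{0}=0\}=\{Y_{0}=\omega_{0},\xi_{0}=0\}$, and $\xi_{0}$ is independent of $(X_{0},Y_{0},Z_{-k}^{-1})$,
\[
p_{Z}(\omega_{0}\mid\omega_{-k}^{-1})=(1-\varepsilon)\,\mathbb{P}(X_{0}=\omega_{0}\mid Z_{-k}^{-1}=\omega_{-k}^{-1})+\varepsilon\,\mathbb{P}(Y_{0}=\omega_{0}\mid Z_{-k}^{-1}=\omega_{-k}^{-1}).
\]
By (\ref{lkjhhgz}) and (\ref{ymod22}) this is at least $(1-\varepsilon)\alpha_{X}+\varepsilon\alpha_{Y}$, a convex combination of $\alpha_{X}$ and $\alpha_{Y}$ and therefore $\ge\alpha_{\min}$, which is (\ref{lkjhhgz00}).

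Finally, (\ref{xmod2}) is the exact analogue of (\ref{eq2.3}), so I would argue as in Lemma \ref{enlema5}: writing the conditional probability as a ratio of cylinder sums, in the manner of (\ref{eq2.2888}), reduces the claim to the pointwise lower bound $\mu_{X}(x_{-k}^{-j-2}\omega_{-j-1}\omega_{-j}^{-1})/\mu_{X}(x_{-k}^{-j-2}\omega_{-j}^{-1})\ge\alpha_{X}\beta_{X}^{*}$ as in (\ref{e.eeee}). I would split this ratio into a single transition factor, bounded below by $\alpha_{X}$ through non-nullness, times a product of ratios of transition probabilities whose conditioning strings agree on progressively longer suffixes; these are controlled by the continuity-rate definition and their product is bounded below by the telescoping product $\prod_{m\ge0}(1-\beta_{m,X})=\beta_{X}^{*}$. \emph{The main obstacle} is precisely this last step: one must track how many symbols the two conditioning strings share at each stage so that $\beta_{m,X}$ enters with the correct index, and must also justify that the \emph{finite}-memory conditionals inherit the continuity bound (being mixtures, over the unseen past, of infinite-memory kernels). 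The positivity $\beta_{X}^{*}>0$, guaranteed by summability of $\{\beta_{k,X}\}$, is what keeps the final bound nontrivial.
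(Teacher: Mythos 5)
Your proposal is correct and follows essentially the same route as the paper: (\ref{lkjhhgz00}) is deduced from the convex combination $(1-\varepsilon)\mathbb{P}(X_0=\omega_0|Z_{-k}^{-1}=\omega_{-k}^{-1})+\varepsilon\,\mathbb{P}(Y_0=\omega_0|Z_{-k}^{-1}=\omega_{-k}^{-1})$, the bounds (\ref{lkjhhgz}) and (\ref{ymod22}) come from averaging $p_X(\omega_0|\cdot)\geq\alpha_X$ (resp. $p_Y$) over pasts compatible with the observed $Z$-string, and (\ref{xmod2}) is reduced to the termwise cylinder-ratio bound (\ref{e.eeee}) inside the representation (\ref{modfff}). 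The only difference is cosmetic: you justify (\ref{lkjhhgz}) by a conditional-independence identity given $X_{-k}^{-1}$, whereas the paper writes out an explicit limit of ratios of cylinder sums; both amount to the same averaging argument.
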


\begin{proof}
It follows from (\ref{lulu1}) that
\begin{equation}\label{luview44}
p_{Z}\left(\omega_{0}|\omega_{-k}^{-1}\right)=(1-\varepsilon)\mathbb{P}\left(X_{0}=\omega_{0}|Z_{-k}^{-1}=\omega_{-k}^{-1}\right) + \varepsilon\mathbb{P}\left(Y_{0}=\omega_{0}|Z_{-k}^{-1}=\omega_{-k}^{-1}\right).
\end{equation}
Thus the assertion (\ref{lkjhhgz00}) follows from (\ref{lkjhhgz}), (\ref{ymod22}) and (\ref{luview44}). Now, considering the independence of the
processes ${\boldsymbol \xi}$, {\bf X} and {\bf Y}, we have
\begin{eqnarray}
\lefteqn{\mathbb{P}\left(X_{0}=\omega_{0}|Z_{-k}^{-1}=\omega_{-k}^{-1}\right)}\label{nanviu}\\
&=&\lim_{l\rightarrow +\infty}\frac{\displaystyle\sum_{v_{-k}^{-1}}\displaystyle\sum_{u_{-l}^{-1}}p_{X}\left(\omega_{0}|\omega_{-\infty}^{-l-1}u_{-l}^{-1}\right)\mathbb{P}\left(X_{-l}^{-1}=u_{-l}^{-1}|X_{-\infty}^{-l-1}=\omega_{{-\infty}}^{-l-1}\right)p_{Y}(v_{-k}^{-1})p_{\xi}}{\displaystyle\sum_{v_{-k}^{-1}}\displaystyle\sum_{u_{-l}^{-1}}\mathbb{P}\left(X_{-l}^{-1}=u_{-l}^{-1}|X_{-\infty}^{-l-1}=\omega_{{-\infty}}^{l-1}\right)p_{Y}(v_{-k}^{-1})p_{\xi}},\nonumber
\end{eqnarray}
where
$$
p_{\xi}=\mathbb{P}\left(\displaystyle\bigcap_{i: \ -k \leq -i \leq -1,  \ u_{-i}\neq v_{-i}, \ u_{-i}= \omega_{-i}}\xi_{-i}=1,\displaystyle\bigcap_{t: \ -k \leq -t \leq -1,  \ \ u_{-t}\neq v_{-t}, \ u_{-t}\neq \omega_{-t}}\xi_{-t}=0\right),
$$
with the sequences $u_{-l}^{-1}\in {\cal{A}}_{-l}^{-1}$, $v_{-k}^{-1}\in {\cal{A}}_{-k}^{-1}$, satisfying 
$$
\mathbb{P}\left(X_{-l}^{-1}=u_{-l}^{-1}, Y_{-k}^{-1}=v_{-k}^{-1}, Z_{-k}^{-1}=\omega_{-k}^{-1}\right)\neq 0.
$$
From the non-nullness hypothesis of the process  {\bf X} and (\ref{nanviu}) we can write (\ref{lkjhhgz}). Analo-\newline gously, we can prove (\ref{ymod22}). Now, in order to prove (\ref{xmod2}), we note that
\begin{eqnarray}\label{modfff}
\lefteqn{\mathbb{P}\left(X_{-j-1}=\omega_{-j-1}|X_{-j}^{-1}=\omega_{-j}^{-1}, Z_{-k}^{-j-2}=\omega_{-k}^{-j-2}\right)}\\
&=&\frac{\displaystyle\sum\mathbb{P}\left(Z_{-k}^{-j-2}=\omega_{-k}^{-j-2}|X_{-k}^{-j-2}=x_{-k}^{-j-2},Y_{-k}^{-j-2}=y_{-k}^{-j-2}\right)p_{X}\left(x_{-k}^{-j-2}\omega_{-j-1}^{-1}\right)p_{Y}(y_{-k}^{-j-2})}{\displaystyle\sum\mathbb{P}\left(Z_{-k}^{-j-2}=\omega_{-k}^{-j-2}|X_{-k}^{-j-2}=x_{-k}^{-j-2},Y_{-k}^{-j-2}=y_{-k}^{-j-2}\right)p_{X}\left(x_{-k}^{-j-2}\omega_{-j}^{-1}\right)p_{Y}(y_{-k}^{-j-2})}\cdot\nonumber
\end{eqnarray}
The last two summations are over the set 
\begin{equation*}
{\cal{S}^{*}}=\left\{x_{-k}^{-j-2}, \ y_{-k}^{-j-2} \in {\cal{A}}_{-k}^{-j-2}\left|\right.\left\{X_{-k}^{-j-2}=x_{-k}^{-j-2}, Y_{-k}^{-j-2}=y_{-k}^{-j-2}, Z_{-k}^{-j-2}=\omega_{-k}^{-j-2}\right\}\neq\emptyset\right\}.
\end{equation*}
Therefore, by plugging (\ref{e.eeee}) into (\ref{modfff}), the assertion (\ref{xmod2}) follows. 
\end{proof}


\begin{lemma}\label{3lema6n}
For any $\varepsilon \in (0,1)$, any $k>j\geq 0$ and any $\omega_{-k}^{0}$, we have
$$
\mathbb{P}\left(X_{-j-1}={\omega }_{-j-1}'|X_{-j}^{-1}=\omega_{-j}^{-1}, Z_{-k}^{-j-1}=\omega_{-k}^{-j-1}\right)\leq \frac{\varepsilon}{\alpha\beta^{*}_{\min}},
$$ 
where ${\omega }_{-j-1}'\neq \omega _{-j-1}$ and $\alpha\beta^{*}_{\min}=\min\{\alpha_{X}\beta_{X}^{*},\alpha_{Y}\}$.
\end{lemma}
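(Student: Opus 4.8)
The plan is to mimic the proof of Lemma \ref{lema6}, replacing the deterministic ``$1\to 0$'' contamination by the random substitution of $\mathbf{Y}$ for $\mathbf{X}$ dictated by (\ref{lulu1}). Write $\mathcal{F}=\{X_{-j}^{-1}=\omega_{-j}^{-1},\,Z_{-k}^{-j-2}=\omega_{-k}^{-j-2}\}$ for the conditioning event, peel off the coordinate at time $-j-1$ from $Z_{-k}^{-j-1}$, and express the quantity of interest as
$$
\mathbb{P}\bigl(X_{-j-1}=\omega'_{-j-1}\mid X_{-j}^{-1}=\omega_{-j}^{-1},Z_{-k}^{-j-1}=\omega_{-k}^{-j-1}\bigr)
=\frac{\mathbb{P}\bigl(X_{-j-1}=\omega'_{-j-1},Z_{-j-1}=\omega_{-j-1}\mid\mathcal{F}\bigr)}{\mathbb{P}\bigl(Z_{-j-1}=\omega_{-j-1}\mid\mathcal{F}\bigr)}.
$$
I would then bound the numerator from above by $\varepsilon$ and the denominator from below by $\alpha\beta^{*}_{\min}$, which yields the claim.

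For the numerator, note that because $\omega'_{-j-1}\neq\omega_{-j-1}$, the event $\{X_{-j-1}=\omega'_{-j-1},Z_{-j-1}=\omega_{-j-1}\}$ forces $\xi_{-j-1}=0$: under (\ref{lulu1}), $\xi_{-j-1}=1$ would give $Z_{-j-1}=X_{-j-1}=\omega'_{-j-1}\neq\omega_{-j-1}$. Since $\xi_{-j-1}$ is independent of $\mathbf{X}$, $\mathbf{Y}$ and of $\mathcal{F}$ (which involves only $\xi_{-k}^{-j-2}$), factoring out $\mathbb{P}(\xi_{-j-1}=0)=\varepsilon$ gives
$$
\mathbb{P}\bigl(X_{-j-1}=\omega'_{-j-1},Z_{-j-1}=\omega_{-j-1}\mid\mathcal{F}\bigr)
=\varepsilon\,\mathbb{P}\bigl(X_{-j-1}=\omega'_{-j-1},Y_{-j-1}=\omega_{-j-1}\mid\mathcal{F}\bigr)\le\varepsilon.
$$

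For the denominator, I would split $\{Z_{-j-1}=\omega_{-j-1}\}$ according to the value of $\xi_{-j-1}$. Using the same independence,
$$
\mathbb{P}\bigl(Z_{-j-1}=\omega_{-j-1}\mid\mathcal{F}\bigr)
=(1-\varepsilon)\,\mathbb{P}\bigl(X_{-j-1}=\omega_{-j-1}\mid\mathcal{F}\bigr)
+\varepsilon\,\mathbb{P}\bigl(Y_{-j-1}=\omega_{-j-1}\mid\mathcal{F}\bigr).
$$
The first conditional probability is at least $\alpha_{X}\beta_{X}^{*}$ by (\ref{xmod2}). For the second I claim $\mathbb{P}(Y_{-j-1}=\omega_{-j-1}\mid\mathcal{F})\ge\alpha_{Y}$: since $\mathbf{Y}$ is independent of $\mathbf{X}$ and $\boldsymbol{\xi}$ and every time appearing in $\mathcal{F}$ precedes $-j-1$, conditioning further on the whole past $Y_{-\infty}^{-j-2}$ reduces this to $p_{Y}(\omega_{-j-1}\mid Y_{-\infty}^{-j-2})\ge\alpha_{Y}$ by non-nullness, and the bound survives averaging by the tower property — this is exactly the argument behind (\ref{ymod22}), carried out with the harmless extra conditioning on $X_{-j}^{-1}$. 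Consequently the denominator is a convex combination of $\alpha_{X}\beta_{X}^{*}$ and $\alpha_{Y}$, hence at least $\min\{\alpha_{X}\beta_{X}^{*},\alpha_{Y}\}=\alpha\beta^{*}_{\min}$, and combining the two estimates completes the proof.

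The only delicate point is this $Y$-lower bound: unlike its $X$-counterpart (\ref{xmod2}), it is not literally one of the displayed estimates of Lemma \ref{3enlema5}, because here $Y_{-j-1}$ is conditioned on the extra coordinates $X_{-j}^{-1}$. The main obstacle is therefore to justify cleanly that this extra conditioning — temporally in the ``future'' of $Y_{-j-1}$ but independent of $\mathbf{Y}$ — cannot push the bound below $\alpha_{Y}$; the mixture representation used for (\ref{ymod22}) handles this, and it is worth emphasizing that no continuity factor $\beta_{Y}^{*}$ is needed, precisely because $\mathcal{F}$ reveals only the past of $Y_{-j-1}$.
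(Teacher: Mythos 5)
Your proof is correct and follows essentially the same route as the paper's: the same ratio decomposition with the numerator bounded by $\varepsilon$ (the paper imports this step wholesale from Lemma \ref{lema6}), and the same lower bound on the denominator, $(1-\varepsilon)\alpha_{X}\beta_{X}^{*}+\varepsilon\alpha_{Y}\geq\alpha\beta^{*}_{\min}$, obtained from (\ref{xmod2}) together with the $Y$-analogue of (\ref{ymod22}). Your explicit justification that the extra conditioning on $X_{-j}^{-1}$ does not spoil the bound $\mathbb{P}(Y_{-j-1}=\omega_{-j-1}\mid\mathcal{F})\geq\alpha_{Y}$ fills in a detail the paper states without elaboration.
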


\begin{proof}
Analogously to the proof of Lemma \ref{lema6}, we can show that
\begin{eqnarray}\label{newkkk}
\lefteqn{\mathbb{P}\left(X_{-j-1}={{\omega }_{-j-1}'}|X_{-j}^{-1}=\omega_{-j}^{-1}, Z_{-k}^{-j-1}=\omega_{-k}^{-j-1}\right)}\nonumber \\
&\leq&\frac{\varepsilon}{\mathbb{P}\left(Z_{-j-1}=\omega_{-j-1}|X_{-j}^{-1}=\omega_{-j}^{-1}, Z_{-k}^{-j-2}=\omega_{-k}^{-j-2}\right)}\cdot\label{mod45rr}
\end{eqnarray}
Now, from the independence between the processes {\bf X} and {\bf Y}, the inequality (\ref{ymod22}), it follows that
\begin{eqnarray}
\mathbb{P}\left(Y_{-j-1}=\omega_{-j-1}|X_{-j}^{-1}=\omega_{-j}^{-1}, Z_{-k}^{-j-2}=\omega_{-k}^{-j-2}\right)
&\geq& \alpha_{Y}.\label{modfgh}
\end{eqnarray}
From (\ref{modfgh}), (\ref{xmod2}) and (\ref{newkkk}), the lemma is proved.
\end{proof}


\begin{pft3}
{\rm 
First, for any $a \in {\cal{A}}$ and any $\omega_{-k}^{-1} \in {\cal{A}}_{-k}^{-1}$, we have 
\begin{equation}\label{con2222}
\left| p_{Z}\left( a | \omega_{-k}^{-1} \right) - \mathbb{P}\left( X_{0}=a | Z_{-k}^{-1}=\omega_{-k}^{-1}\right) \right| \leq 2\varepsilon\cdot
\end{equation}
Proceeding analogously to the proof of Theorem \ref{enteor1}, taking account Lemmas \ref{3lema4} and \ref{3lema6n},  one can show that
\begin{equation*}
\mathbb{P}(X_{0}=a|Z_{-k}^{-1}=\omega_{-k}^{-1})-\mathbb{P}(X_{0}=a|X_{-k}^{-1}=\omega_{-k}^{-1})\leq \displaystyle\sum_{j=0}^{k-1}\left[(N-1)\frac{2\varepsilon\beta_{j,X}}{\alpha\beta^{*}_{\min}} + (N-1)2\varepsilon \beta_{j,X}\right].
\end{equation*}
Therefore, considering the last inequality and  (\ref{con2222}), we have
$$
\left|\mathbb{P}(X_{0}=a|Z_{-k}^{-1}=\omega_{-k}^{-1})-\mathbb{P}(X_{0}=a|X_{-k}^{-1}=\omega_{-k}^{-1})\right|\leq 2\varepsilon+(N-1)\left[\frac{2\varepsilon\beta_{X}}{\alpha\beta^{*}_{\min}}+2\varepsilon \beta_{X}\right]
$$
and the theorem is proved.}
\end{pft3}


\subsection{Proof of Theorem \ref{3enteo2}}


\begin{lemma}\label{33gale3} For any $i\geq 1$, any $k \geq i$, any
  $j\geq 1$ and any finite sequence $\omega_{1}^{j} \in {\cal A}_{1}^{j}$, the following
  inequality holds
$$
\sup_{x^{i}_{1}, \ y^{i}_{1} \in {\cal{A}}^{i}; \ \theta_{1}^{i} \in {\{0,1\}}^{i}}\left|\mathbb{P}\left(Z_{k}^{k+j-1}=\omega^{j}_{1}\left.\right|X_{1}^{i}=x_{1}^{i}, Y_{1}^{i}=y_{1}^{i}, \xi_{1}^{i}=\theta_{1}^{i}\right)-p_{Z}\left(\omega^{j}_{1}\right)\right| \leq \frac{2\rho_{ijk, \max}}{(1-\varepsilon)^{j}},
$$ 
where
$$
\rho_{ijk, \max}=\max{\left\{\displaystyle\sum_{l = 0}^{j-1}\rho_{k-i+l, X},\displaystyle\sum_{s = 0}^{j-1}\rho_{k-i+s, Y}\right\}}.
$$
\end{lemma}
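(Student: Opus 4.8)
The plan is to reproduce the argument of Lemma \ref{lema88888}, now propagating two independent chains instead of one. First I would use the mutual independence of $\mathbf{X}$, $\mathbf{Y}$ and $\boldsymbol{\xi}$ to write the conditional probability as an average over the trajectories of $X$ and $Y$ on the observation block $\{k,\dots,k+j-1\}$:
$$
\mathbb{P}\left(Z_k^{k+j-1}=\omega_1^j\mid X_1^i=x_1^i,Y_1^i=y_1^i,\xi_1^i=\theta_1^i\right)=\sum_{x,\,y} p_X(x\mid x_1^i)\,p_Y(y\mid y_1^i)\,q(\omega_1^j\mid x,y),
$$
where the sum runs over $x=x_k^{k+j-1}$, $y=y_k^{k+j-1}$, and $q(\omega_1^j\mid x,y)$ is the probability that the coin-driven superposition of the two strings equals $\omega_1^j$, i.e. the product over the block of the factors $(1-\varepsilon)$ or $\varepsilon$ that select the matching symbol. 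The conditioning on $\xi_1^i$ disappears because $\boldsymbol{\xi}$ is i.i.d. and $q$ reads only the coins inside the block. The identical expansion of the stationary law gives $p_Z(\omega_1^j)=\sum_{x,y}\mu_X(x)\,\mu_Y(y)\,q(\omega_1^j\mid x,y)$, with $0\le q\le 1$.

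The core of the proof is the telescoping identity $ab-a'b'=(a-a')b+a'(b-b')$ taken with $a=p_X(x\mid x_1^i)$, $a'=\mu_X(x)$, $b=p_Y(y\mid y_1^i)$ and $b'=\mu_Y(y)$. Subtracting the two displays above and inserting this identity I would obtain
$$
\mathbb{P}(\cdot)-p_Z(\omega_1^j)=\sum_{x,y}q(\omega_1^j\mid x,y)\left[\left(p_X(x\mid x_1^i)-\mu_X(x)\right)p_Y(y\mid y_1^i)+\mu_X(x)\left(p_Y(y\mid y_1^i)-\mu_Y(y)\right)\right].
$$
Taking absolute values, bounding $q\le 1$, and collapsing the spectator sum in each term (using $\sum_y p_Y(y\mid y_1^i)=1$ and $\sum_x\mu_X(x)=1$) leaves a bound by two single-chain block deviations, one for $\mathbf{X}$ and one for $\mathbf{Y}$. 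Each of these is controlled, exactly as in the proof of Lemma \ref{lema88888} via Lemma $3.4$ of Galves and Leonardi (2008), by $\big(\sum_{l=0}^{j-1}\rho_{k-i+l,X}\big)/(1-\varepsilon)^j$ and $\big(\sum_{s=0}^{j-1}\rho_{k-i+s,Y}\big)/(1-\varepsilon)^j$ respectively. Adding the two and bounding each sum by $\rho_{ijk,\max}$ yields the claimed $2\rho_{ijk,\max}/(1-\varepsilon)^j$, uniformly over $x_1^i$, $y_1^i$ and $\theta_1^i$.

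The main obstacle is that the superposition kernel $q(\omega_1^j\mid x,y)$ entangles the $x$- and $y$-summations, so the difference of the two averages does not split as a product of an $X$-discrepancy and a $Y$-discrepancy; the telescoping identity is precisely the device that decouples them, at the cost of the factor $2$ and of having to estimate each chain separately. A secondary point demanding care is the reduction step itself: after applying the identity one must verify that summing the entangled kernel against a full conditional (or stationary) law of the spectator chain genuinely collapses to $1$, which is what legitimizes discarding $q$ through $q\le 1$. Finally, I would handle the boundary case $k=i$, where a single coordinate is shared by the conditioning and observation blocks, as in Lemma \ref{lema88888}: the shared coordinate enters only through already-fixed values and does not disturb the independence used to eliminate $\xi_1^i$.
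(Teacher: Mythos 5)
Your proposal is correct and follows essentially the same route as the paper: expand the conditional law of $Z_k^{k+j-1}$ over the block trajectories of $\mathbf{X}$ and $\mathbf{Y}$ using independence, telescope the product $p_X(\cdot\mid x_1^i)p_Y(\cdot\mid y_1^i)-\mu_X(\cdot)\mu_Y(\cdot)$ into an $X$-discrepancy and a $Y$-discrepancy, bound the superposition kernel by $1$, and control each single-chain block deviation by Lemma $3.4$ of Galves and Leonardi (2008), which is exactly where the factor $2$ and the maximum $\rho_{ijk,\max}$ arise in the paper's argument as well. Your write-up is in fact more explicit than the paper's about the decoupling identity and about why the spectator sums collapse.
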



\begin{proof}
From the independence of processes {\bf X}, {\bf Y} and
${\boldsymbol \xi}$, for any $x^{i}_{1}, \ y^{i}_{1} \in {\cal{A}}^{i}$ and any $\theta_{1}^{i} \in {\{0,1\}}^{i}$, we have 
\begin{eqnarray*}
\lefteqn{\left|\mathbb{P}\left({Z_{k}}^{k+j-1}=\omega^{j}_{1}\left.\right|X_{1}^{i}=x_{1}^{i}, Y_{1}^{i}=y_{1}^{i}, \xi_{1}^{i}=\theta_{1}^{i}\right)-p_{Z}\left(\omega^{j}_{1}\right)\right|}\\
&=&\left|\displaystyle\sum\mathbb{P}\left(X_{k}^{k+j-1}= x_{k}^{k+j-1}, Y_{k}^{k+j-1}= y_{k}^{k+j-1}, Z_{k}^{k+j-1}=\omega_{1}^{j}|X_{1}^{i} = x_{1}^{i},Y_{1}^{i} = y_{1}^{i}, \xi_{1}^{i}=\theta_{1}^{i}\right)\right.\\
& & - \left.p_{Z}\left(\omega^{j}_{1}\right)\right|\\
&&\quad \times\left|\mathbb{P}\left(X_{k}^{k+j-1}=x_{k}^{k+j-1},Y_{k}^{k+j-1}=y_{k}^{k+j-1}|X_{1}^{i}=x_{1}^{i}, Y_{1}^{i}=y_{1}^{i}\right) -p_{X}(x_{k}^{k+j-1})p_{Y}(y_{k}^{k+j-1})\right|\\
&\leq&\displaystyle\sum\mathbb{P}\left(Z_{k}^{k+j-1}=\omega^{j}_{1}|X_{k}^{k+j-1}=x_{k}^{k+j-1},Y_{k}^{k+j-1}=y_{k}^{k+j-1}\right)\\
&&\quad \times\left[\left|\mathbb{P}\left(X_{k}^{k+j-1}=x_{k}^{k+j-1},Y_{k}^{k+j-1}=y_{k}^{k+j-1}|X_{1}^{i}=x_{1}^{i}, Y_{1}^{i}=y_{1}^{i}\right)\right.\right.\\
& & - \left.\left.p_{Y}(y_{k}^{k+j-1}|y_{1}^{i})p_{X}(x_{k}^{k+j-1})\right|\right.\\
& & + \left.\left|p_{Y}(y_{k}^{k+j-1}|y_{1}^{i})p_{X}(x_{k}^{k+j-1})-p_{X}(x_{k}^{k+j-1})p_{Y}(y_{k}^{k+j-1})\right|\right]\\
&\leq&\frac{2\rho_{ijk, \max}}{(1-\xi)^{j}},
\end{eqnarray*} 
where the summations are over ${x_{k}^{k+j-1}, \ y_{k}^{k+j-1}\in {\cal{A}}_{1}^{j}}$. Since the {\bf X} and {\bf Y} satisfy the assumptions of non-nullness
and  summability of the continuity rate the last inequality is a consequence the Lemma $3.4$ of Galves and Leonardi (2008).  
\end{proof}

We observe that from this point on the technique used in the proof of the Theorem  \ref{3enteo2} will be essentially the same  employed in the proof of Theorem \ref{enteorema2}.



\begin{lemma}\label{33lem10}
For any finite sequence $\omega$ and any $t > 0$, we have
\begin{eqnarray}\label{33lem10nancy}
\mathbb{P}\left(\left|N_{n}(\omega)-(n-l(\omega)+1)p_{Z}(\omega)\right|>t\right)\leq e^{\frac{1}{e}}\exp\left[-\frac{-t^{2}(1-\varepsilon)^{l(\omega)}}{4e[n-l(\omega)+1]l(\omega){\beta}_{\alpha, \max}}\right],
\end{eqnarray}
where ${\beta}_{\alpha, \max}=\max{\left\{\left( 1 + \frac{\beta_{X}}{\alpha_{X}}\right),\left( 1 + \frac{\beta_{Y}}{\alpha_{Y}}\right)\right\}}$. Moreover, for any $a \in {\cal{A}}$ and any $n>\frac{N+1}{tq(\omega)}+l(\omega)$, we have
\begin{eqnarray*}
\lefteqn{\mathbb{P}\left(\left|{\hat{p}_{{Z}_{n}}(a|\omega)}-p_{Z}(a|\omega)\right|>t\right)}\\
&\leq&(N+1)e^{\frac{1}{e}}\exp{\left\{-\left[t-\frac{N+1}{(n-l(\omega))p_{Z}(\omega)}\right]^{2}\frac{{\left[p_{Z}(\omega)\right]}^{2}(n-l(\omega))(1-\varepsilon)^{l(\omega a)}}{ 32N^{2}el(\omega a)  {\beta}_{\alpha, \max}}\right\}}\cdot
\end{eqnarray*}
\end{lemma}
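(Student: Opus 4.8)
The plan is to reproduce, \emph{mutatis mutandis}, the argument of Lemma \ref{enlema10}, with the single-process moment input replaced by its two-process version, Lemma \ref{33gale3}. First I would rewrite the occurrence count of a finite string $\omega_1^j$ as a sum of block indicators compatible with the mixture (\ref{lulu1}). Since $\mathbf{1}_{\{Z_{t+i}=\omega_i\}}=\mathbf{1}_{\{X_{t+i}=\omega_i\}}\mathbf{1}_{\{\xi_{t+i}=1\}}+\mathbf{1}_{\{Y_{t+i}=\omega_i\}}\mathbf{1}_{\{\xi_{t+i}=0\}}$, one gets
\[
N_n(\omega_1^j)=\sum_{t=0}^{n-j}\prod_{i=1}^{j}\left[\mathbf{1}_{\{X_{t+i}=\omega_i\}}\mathbf{1}_{\{\xi_{t+i}=1\}}+\mathbf{1}_{\{Y_{t+i}=\omega_i\}}\mathbf{1}_{\{\xi_{t+i}=0\}}\right].
\]
I then center each block by subtracting $p_Z(\omega_1^j)$, obtaining a process $U_t$ with $\mathbb{E}(U_t)=0$ and $\|U_t\|_{r/2}\leq 1$, and let $\mathcal{M}_i$ be the $\sigma$-algebra generated by $U_0,\dots,U_i$.

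The second step is the $L^r$ estimate of the fluctuation. Applying Proposition $4$ of Dedecker and Doukhan (2003) exactly as in (\ref{despro9}) reduces everything to controlling $\|\mathbb{E}(U_k\mid\mathcal{M}_t)\|_\infty$, which is a supremum of $|\mathbb{P}(Z_{k+1}^{k+j}=\omega_1^j\mid X_1^{t+j},Y_1^{t+j},\xi_1^{t+j})-p_Z(\omega_1^j)|$ over the conditioning values. This is precisely the quantity bounded by Lemma \ref{33gale3}, namely by $2\rho_{ijk,\max}/(1-\varepsilon)^j$. Because both $(\rho_{l,X})$ and $(\rho_{l,Y})$ are summable, with totals controlled by $2(1+\beta_X/\alpha_X)$ and $2(1+\beta_Y/\alpha_Y)$ through (\ref{galeo2008}), the double summation over $k$ and $t$ is finite and reorganizes into the constant $\beta_{\alpha,\max}$, giving a moment bound of the same shape as (\ref{despro9123134}) with $(1+\beta_X/\alpha_X)$ replaced by $\beta_{\alpha,\max}$. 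From this, Markov's inequality in the form (\ref{despro992}) together with the elementary optimization over $r$ recorded in (\ref{despro9924447}) yields the first assertion (\ref{33lem10nancy}).

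For the second assertion I would transfer the ratio argument of Lemma \ref{enlema10} to the alphabet of size $N$. Writing $\hat p_Z(a\mid\omega)_n=(N_n(\omega a)+1)/(N_n(\omega\cdot)+N)$ and comparing it with the deterministic quantity $((n-l(\omega))p_Z(\omega a)+1)/((n-l(\omega))p_Z(\omega)+N)$, the systematic error is at most $(N+1)/[(n-l(\omega))p_Z(\omega)]$; this is absorbed by the hypothesis $n>\tfrac{N+1}{t\,p_Z(\omega)}+l(\omega)$ upon passing to $t'=t-(N+1)/[(n-l(\omega))p_Z(\omega)]$. The event that $\hat p_Z(a\mid\omega)_n$ deviates from that deterministic quantity by more than $t'$ is then contained in a union of one deviation event for $N_n(\omega a)$ with threshold $\tfrac{t'}{2}[(n-l(\omega))p_Z(\omega)+N]$ and, over $b\in\mathcal{A}$, of $N$ deviation events for $N_n(\omega b)$ with threshold $\tfrac{t'}{2N}[(n-l(\omega))p_Z(\omega)+N]$. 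Applying the first assertion to each of these $N+1$ terms, using $(n-l(\omega))p_Z(\omega)+N\geq(n-l(\omega))p_Z(\omega)$ and keeping the smallest (hence dominating) threshold $t'/2N$, produces the prefactor $(N+1)e^{1/e}$ and the stated exponent with the alphabet size $N$ entering quadratically.

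The genuinely substantive ingredient is Lemma \ref{33gale3}: once the block products are shown to decorrelate at the summable rate $2\rho_{ijk,\max}/(1-\varepsilon)^j$, the remainder is the same machinery as in the binary case. The only real care needed is in the constant bookkeeping --- tracking the factor $2$ coming from Lemma \ref{33gale3}, verifying that the two summable rates can be dominated simultaneously by a single constant $\beta_{\alpha,\max}$, and following the shift from $l(\omega)$ to $l(\omega a)$ and the alphabet size $N$ through the ratio step --- which is exactly what fixes the explicit constants $32N^2$ and $(N+1)$ quoted in the statement.
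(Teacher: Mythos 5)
Your proposal follows the paper's own proof essentially step for step: the same block-indicator decomposition of $N_{n}(\omega_{1}^{j})$ adapted to the mixture (\ref{lulu1}), the same centering and application of Proposition 4 of Dedecker and Doukhan (2003) with Lemma \ref{33gale3} supplying the bound on $\left\|\mathbb{E}(U_{k}|{\cal{M}}_{t})\right\|_{\infty}$, the same passage from the $L^{r}$ moment bound to the exponential tail via Markov's inequality and optimization in $r$, and the same $(N+1)$-fold ratio-comparison argument for the empirical transition probabilities. The approach and the constant bookkeeping are the same as in the paper, so there is nothing further to add.
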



\begin{proof}
Considering (\ref{defnumocoamos}) and (\ref{lulu1}) we have, for any finite sequence $\omega_{1}^{j} \in {\cal{A}}^{j}$, 
$$
N_{n}(\omega_{1}^{j})=\sum_{t = 0}^{n-j}\displaystyle\prod_{1\leq i \leq j} \left[{\textbf{1}}_{\left\{X_{t+i}= \omega_{i}\right\}} {\textbf{1}}_{\left\{\xi_{t+i}= 1\right\}} + {\textbf{1}}_{\left\{Y_{t+i}= \omega_{i}\right\}} {\textbf{1}}_{\left\{\xi_{t+i}= 0\right\}} \right].
$$
We define the process {\bf U} by
$$
U_{t}=\displaystyle\prod_{1\leq i \leq j} \left[{\textbf{1}}_{\left\{X_{t+i}= \omega_{i}\right\}} {\textbf{1}}_{\left\{\xi_{t+i}= 1\right\}} + {\textbf{1}}_{\left\{Y_{t+i}= \omega_{i}\right\}} {\textbf{1}}_{\left\{\xi_{t+i}= 0\right\}} \right]- p_{Z}(\omega_{1}^{j})
$$
and we denote by ${\cal{M}}_{i}$ the $\sigma$-algebra generated by $U_{0},...,U_{i}$. Applying Proposition $4$ of Dedecker and Doukhan (2003), we obtain
\begin{eqnarray*}
\left\|N_{n}(\omega_{1}^{j})-(n-j+1)p_{Z}(\omega_{1}^{j})\right\|_{r}&\leq&  \left(2r \sum_{t = 0}^{n-j}\left\|U_{t}\right\|_{r/2}\sum_{k = t}^{l}\left\|\mathbb{E}(U_{k}|{\cal{M}}_{t})\right\|_{\infty}\right)^{1/2},
\end{eqnarray*}
for any $r\geq 2$. We note that $\left\|U_{t}\right\|_{r/2}\leq 1$. Moreover, for any $x_{1}^{t+j}, y_{1}^{t+j}  \in {\cal{A}}_{1}^{t+j}$ and any $\theta_{1}^{t+j}\in \{0,1\}^{t+j}$,  we have
$$
\left\|\mathbb{E}(U_{k}|{\cal{M}}_{t})\right\|_{\infty}=\sup_{x_{1}^{t+j}, \ y_{1}^{t+j}, \ \theta_{1}^{t+j}}\left|\mathbb{P}(Z_{k+j}^{k+1}=\omega_{j}^{1}|X_{1}^{t+j}=x_{1}^{t+j},X_{1}^{t+j}=y_{1}^{t+j}, \xi_{1}^{t+j}=\theta_{1}^{t+j}) - p_{Z}(\omega_{1}^{j})\right|.
$$
Thus, by Lemmas \ref{33gale3} and \ref{lema88888}  we have the following inequality
\begin{equation*}
\left\|N_{n}(\omega)-(n-j+1)p_{Z}(\omega)\right\|_{r} \leq \left[\frac{8r}{(1-\varepsilon)^{l(\omega)}} (n-l(\omega)+1)l(\omega)  {\beta}_{\alpha, \max}\right]^{1/2},
\end{equation*}
where ${\beta}_{\alpha, \max}=\max{\left\{\left( 1 + \frac{\beta_{X}}{\alpha_{X}}\right),\left( 1 + \frac{\beta_{Y}}{\alpha_{Y}}\right)\right\}}$. Thus, as in proof of Lemma \ref{enlema10}, we can write
\begin{eqnarray*}
\mathbb{P}\left( \left|N_{n}(\omega)-(n-l(\omega)+1)p_{Z}(\omega)\right| > t \right) \leq e^{\frac{1}{e}}\exp{\left\{-\frac{t^2(1-\varepsilon)^{l(\omega)}}{ 8e(n-l(\omega)+1)l(\omega)  {\beta}_{\alpha, \max}}\right\}}.
\end{eqnarray*}
Now, one can see that the following inequality holds
\begin{eqnarray}
\lefteqn{\mathbb{P}\left(\left|\frac{N_{n}(\omega a)+1}{N_{n}(\omega .)+|{\cal{A}}|}-\frac{(n-l(\omega))p_{Z}(\omega a)+1}{(n-l(\omega))p_{Z}(\omega)+|{\cal{A}}|}\right| >t'\right)}\nonumber\\
&\leq&\mathbb{P}\left(\left|N_{n}(\omega a)-(n-l(\omega))p_{Z}(\omega a)\right|>\frac{t'}{2}\left[(n-l(\omega))p_{Z}(\omega)+|{\cal{A}}|\right]\right)\nonumber\\
&+& \sum_{b \in {\cal{A}}}\mathbb{P}\left(\left|N_{n}(\omega b)-(n-l(\omega))p_{Z}(\omega b)\right|>\frac{t'}{2|{\cal{A}}|}\left[(n-l(\omega))p_{Z}(\omega)+|{\cal{A}}|\right]\right)\cdot\label{33lem10nancy333}
\end{eqnarray}
Therefore, applying (\ref{33lem10nancy}) we can bound from above (\ref{33lem10nancy333}) by
$$
(N+1)e^{\frac{1}{e}}\exp{\left\{-\left[t-\frac{N+1}{(n-l(\omega))p_{Z}(\omega)}\right]^{2}\frac{{\left[p_{Z}(\omega)\right]}^{2}(n-l(\omega))(1-\varepsilon)^{l(\omega a)}}{ 32N^{2}e(l(\omega a))  {\beta}_{\alpha, \max}}\right\}}\cdot
$$
\end{proof}


\begin{lemma}\label{33enlema11}
For any $\delta >4\left[1+\frac{2(N-1)\beta_{X}}{\min(1,\alpha \beta_{min}^{*})}\right]\varepsilon$, any $\omega \in {\cal{T}}_{X}$, any $u\omega \in \hat{{\cal{T}}}_{n}^{\delta, d}$ and
$$
n>\frac{2(N+1)}{\left\{\delta-4\varepsilon\left[1+\frac{2(N-1)\beta_{X}}{\min{(1,\alpha\beta_{min}^{*})}}\right]\right\}\alpha_{min}^{d}}+d
$$
we have
\begin{equation}\label{cota33enlema11}
\mathbb{P}(\Delta_{n}(u\omega)>\delta)\leq 2N(N+1)e^{\frac{1}{e}}\exp{\left\{-(n-d)\frac{\left[\frac{\delta}{2}-\bar{k}\right]^{2}{\alpha_{min}^{2d}}(1-\varepsilon)^{d+1}}{ 32N^{2}e(d+1)  {\beta}_{\alpha, \max}}\right\}},
\end{equation}
where
$$
\bar{k}=2\varepsilon\left[1+\frac{2(N-1)\beta_{X}}{\min{(1,\alpha\beta_{min}^{*})}}\right]+\frac{N+1}{(n-d)\alpha_{min}^{d}}\cdot
$$
\end{lemma}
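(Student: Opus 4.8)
The plan is to follow, almost line for line, the proof of Lemma \ref{enlema11} from the zero-inflated regime, replacing each ingredient by its process-contamination counterpart. Concretely, Theorem \ref{enteor31} takes the role that Theorem \ref{enteor1} played in controlling the bias $|p_Z-p_X|$, and the concentration estimate of Lemma \ref{33lem10} takes the role of Lemma \ref{enlema10}. The remark preceding this lemma already signals that the mechanism is identical to the first regime, so the work is really one of transcription together with careful bookkeeping of the new constants.

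First I would use the suffix property (item (2) of Definition \ref{sufixo111}): since $\omega\in{\cal{T}}_X$, for every admissible $u$ and every $a\in{\cal{A}}$ one has $p_X(a|u\omega)=p_X(a|\mbox{suf}(u\omega))$. Inserting this identity and applying the triangle inequality, I would bound
$$
\left|\hat{p}_Z(a|u\omega)_n-\hat{p}_Z(a|\mbox{suf}(u\omega))_n\right|
$$
by the two empirical deviations $|\hat{p}_Z(a|u\omega)_n-p_Z(a|u\omega)|$ and $|\hat{p}_Z(a|\mbox{suf}(u\omega))_n-p_Z(a|\mbox{suf}(u\omega))|$, plus the two bias terms $|p_Z(a|u\omega)-p_X(a|u\omega)|$ and $|p_Z(a|\mbox{suf}(u\omega))-p_X(a|\mbox{suf}(u\omega))|$. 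By Theorem \ref{enteor31} each bias term is at most $\varepsilon[2+\frac{4(N-1)\beta_X}{\min(1,\alpha\beta_{min}^{*})}]$, which is exactly the first summand of $\bar{k}$ and equals half of the lower threshold imposed on $\delta$.

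Therefore, on $\{\Delta_n(u\omega)>\delta\}$ at least one of the two empirical deviations must exceed $\frac{\delta}{2}-\varepsilon[2+\frac{4(N-1)\beta_X}{\min(1,\alpha\beta_{min}^{*})}]$ for some $a$. A union bound over the $N$ symbols and the two deviation terms, followed by the second assertion of Lemma \ref{33lem10}, produces the prefactor $2N(N+1)e^{1/e}$. To cast the tail into the stated form I would substitute the lower bound $p_Z(\omega)\geq\alpha_{min}^{l(\omega)}$, obtained by chaining inequality (\ref{lkjhhgz00}) of Lemma \ref{3enlema5} over the $l(\omega)$ coordinates, together with $l(u\omega)=d$ and $l(u\omega\,a)=d+1$. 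This converts the estimation correction $\frac{N+1}{(n-d)p_Z(\omega)}$ into $\frac{N+1}{(n-d)\alpha_{min}^{d}}$, i.e. the second summand of $\bar{k}$, and replaces $[p_Z(\omega)]^2(1-\varepsilon)^{l(\omega a)}$ by $\alpha_{min}^{2d}(1-\varepsilon)^{d+1}$ in the exponent; note that here the lower bound on $p_Z$ carries no factor $(1-\varepsilon)$, unlike the zero-inflated case, which is precisely why the exponent displays $(1-\varepsilon)^{d+1}$ rather than $(1-\varepsilon)^{3d+1}$.

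The only genuinely delicate point is the constant bookkeeping: I must verify that the hypothesis $n>\frac{2(N+1)}{\{\delta-4\varepsilon[1+2(N-1)\beta_X/\min(1,\alpha\beta_{min}^{*})]\}\alpha_{min}^{d}}+d$ is exactly what forces $\frac{\delta}{2}-\bar{k}>0$, so that the deviation threshold passed to Lemma \ref{33lem10} is positive (equivalently, that the lemma's own admissibility condition on $n$ is met) and the resulting exponent is negative. With $t=\frac{\delta}{2}-\varepsilon[2+\frac{4(N-1)\beta_X}{\min(1,\alpha\beta_{min}^{*})}]$ matched against the quantities of Lemma \ref{33lem10}, the exponent collapses to $(n-d)\frac{[\frac{\delta}{2}-\bar{k}]^2\alpha_{min}^{2d}(1-\varepsilon)^{d+1}}{32N^2e(d+1)\beta_{\alpha,\max}}$, which is the claimed bound (\ref{cota33enlema11}).
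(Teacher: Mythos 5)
Your proposal is correct and follows essentially the same route as the paper: the suffix property plus the triangle inequality to split $\Delta_n(u\omega)$ into two empirical deviations and two bias terms, Theorem \ref{enteor31} to absorb the bias into the threshold $\frac{\delta}{2}-2\varepsilon\bigl[1+\tfrac{2(N-1)\beta_X}{\min(1,\alpha\beta^*_{min})}\bigr]$, and then a union bound over ${\cal A}$ combined with Lemma \ref{33lem10} and the lower bound $p_Z(\omega)\ge\alpha_{min}^{d}$ from Lemma \ref{3enlema5}. Your accounting of the constants, of the admissibility condition on $n$, and of why the exponent carries $(1-\varepsilon)^{d+1}$ rather than $(1-\varepsilon)^{3d+1}$ is accurate and in fact more explicit than the paper's own sketch.
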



\begin{proof}
Similarly to the proof of Lemma \ref{enlema11} we can show that
\begin{eqnarray*}
\lefteqn{\mathbb{P}\left(\Delta_{n}(u\omega)>\delta\right)}\\
&\leq&\sum_{a\in{\cal{A}}}\left[\mathbb{P}\left(\left|\hat{p}_Z(a|u\omega)_n-p_{Z}(a|\mbox{suf}(u\omega))\right|>\frac{\delta}{2}-2\varepsilon\left[1+\frac{2(N-1)\beta_{X}}{\min{(1,\alpha\beta_{\min}^{*})}}\right]\right)\right.\\
&+&\left.\mathbb{P}\left(\left|p_{Z}(a|\mbox{suf}(u\omega))-\hat{p_{Z}}(a|\mbox{suf}(u\omega))_n\right|>\frac{\delta}{2}-2\varepsilon\left[1+\frac{2(N-1)\beta_{X}}{\min{(1,\alpha\beta_{\min}^{*})}}\right]\right)\right].
\end{eqnarray*}
Then, by using Lemma \ref{33lem10} we can bound from above the right side of the last inequality obtaining (\ref{cota33enlema11}).
\end{proof}



\begin{lemma}\label{33lem12}
There exists $d$ such that for any $\delta <D_{d} - 4\varepsilon\left[1 + \frac{2(N-1)\beta_{X}}{\min{(1,\ \alpha\beta_{\min}^{*})}}\right],$ any $\omega\in\hat{{\cal{T}}}_{n}^{\delta,d}$, with $l(\omega)<K,$ $\omega \notin {\cal{T}}_{X}$, and any
$$
n>\frac{2(N+1)}{\left\{D_{d} - \delta -4\varepsilon\left[1 + \frac{2(N-1)\beta_{X}}{\min{(1,\alpha\beta_{\min}^{*})}}\right]\right\}\alpha^{d}}+d
$$
we have
\begin{equation*}
\mathbb{P}\left(\bigcap_{u\omega \in T\left.\right|_{d}}\left\{\Delta_{n}(u\omega)\leq \delta\right\}\right)\leq 2(N+1)e^{\frac{1}{e}}\exp\left[-(n-d)\frac{\left[D_{d} - \delta-\bar{k}\right]^{2}{\alpha_{\min}^{2d}(1-\varepsilon)^{d}}}{128N^{2}e(d+1){\beta}_{\alpha, \max}}\right]
\end{equation*}
where $\bar{k}$ is as in Lemma \ref{33enlema11}.
\end{lemma}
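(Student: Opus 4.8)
The plan is to follow the template of Lemma \ref{enlema12345}, replacing the zero-inflated bias estimate by the uniform bound of Theorem \ref{enteor31} and the concentration inequality of Lemma \ref{enlema10} by that of Lemma \ref{33lem10}. First I would fix
$$
d = \max_{u \notin {\cal T}_X,\, l(u) < K}\ \min\{k : \text{there exists } \omega \in {\cal C}_k \text{ with } \omega \succ u\},
$$
exactly as in Theorem \ref{3enteo2}. By the definitions (\ref{defCk}) and (\ref{defDk}) of ${\cal C}_d$ and $D_d$, this choice furnishes a distinguished node $\bar{u\omega} \in {\cal T}_X|_d \cap {\cal C}_d$ and a symbol $a^{*}$ (attaining the relevant maximum) for which
$$
\left|p_X(a^{*}|\bar{u\omega}) - p_X(a^{*}|\mbox{suf}(\bar{u\omega}))\right| \geq D_d.
$$
Because $\bar{u\omega} \in {\cal T}_X|_d$, the event $\bigcap_{u\omega \in {\cal T}_X|_d}\{\Delta_n(u\omega) \leq \delta\}$ is contained in $\{\Delta_n(\bar{u\omega}) \leq \delta\}$, so it is enough to estimate the probability of the latter.

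Next, working at the fixed symbol $a^{*}$, I would bound $\Delta_n(\bar{u\omega})$ from below by $|\hat{p}_Z(a^{*}|\bar{u\omega})_n - \hat{p}_Z(a^{*}|\mbox{suf}(\bar{u\omega}))_n|$ and then insert the kernels $p_Z$ and $p_X$ by the triangle inequality, just as in Lemma \ref{enlema12345}. This leaves the genuine gap, at least $D_d$ by the previous step, minus two empirical fluctuations of the form $|\hat{p}_Z(a^{*}|\cdot)_n - p_Z(a^{*}|\cdot)|$ and two contamination-bias terms $|p_Z(a^{*}|\cdot) - p_X(a^{*}|\cdot)|$. Each bias term is controlled uniformly by Theorem \ref{enteor31}, hence by $2\varepsilon[1 + \frac{2(N-1)\beta_X}{\min(1,\alpha\beta^{*}_{\min})}]$, so on $\{\Delta_n(\bar{u\omega}) \leq \delta\}$ the two fluctuations must together exceed $D_d - \delta - 4\varepsilon[1 + \frac{2(N-1)\beta_X}{\min(1,\alpha\beta^{*}_{\min})}]$, a quantity that is strictly positive precisely under the hypotheses imposed on $\delta$ and $\varepsilon$; consequently at least one of the two fluctuations exceeds half of it.

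Finally I would apply the second assertion of Lemma \ref{33lem10} to each of the two fluctuation terms, using $l(\bar{u\omega}) = d$ together with the cylinder lower bound $p_Z(\bar{u\omega}) \geq \alpha_{\min}^d$, obtained by iterating the one-step estimate (\ref{lkjhhgz00}) of Lemma \ref{3enlema5}. The deterministic correction $\frac{N+1}{(n-d)p_Z(\bar{u\omega})}$ built into that concentration bound lowers the effective deviation threshold and, together with the bias contribution, assembles into the quantity $\bar{k}$; the stated lower bound on $n$ is exactly what keeps this threshold positive. A union over the two terms then produces the prefactor $2(N+1)e^{1/e}$ and the claimed exponential rate. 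The main difficulty is not conceptual but arithmetic: one must track the factor $N^2$, the power of $(1-\varepsilon)$, and the numerical constant in the denominator as they emerge from substituting the halved threshold and the bound $p_Z(\bar{u\omega}) \geq \alpha_{\min}^d$ into Lemma \ref{33lem10}, and verify throughout that $D_d - \delta - \bar{k} > 0$ so that the exponent is genuinely negative.
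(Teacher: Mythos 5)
Your proposal follows the paper's own argument essentially step for step: the same choice of $d$, the same distinguished node $\bar{u\omega}\in {\cal{T}}_{X}|_{d}\cap {\cal{C}}_{d}$, the same triangle-inequality decomposition into the gap $D_{d}$, two bias terms controlled by Theorem \ref{enteor31}, and two empirical fluctuations controlled by the second assertion of Lemma \ref{33lem10} with $p_{Z}(\bar{u\omega})\geq\alpha_{\min}^{d}$, followed by the same two-term union bound. The paper's version is merely terser (it delegates the decomposition to the proof of Lemma \ref{enlema12345}); your write-up is a faithful and correct expansion of it.
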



\begin{proof}
Take
$$
d=\underset{u\notin {\cal{T}}_{X}, \ l(u)<K}{\max}\min{\left\{k:\mbox{ there exists } \omega \in C_{k} \ \mbox{with}  \ \omega\succ u\right\}}.
$$
As in proof of Lemma \ref{enlema12345} we can show that
\begin{eqnarray*}
\lefteqn{\mathbb{P}\left(\Delta_{n}(\bar{u\omega})\leq\delta\right)}\\
&\leq& \mathbb{P}\left(\bigcap_{a\in {\cal{A}}}\left\{\left|\hat{p}_Z(a|\bar{u\omega})_n-p_{Z}(a|\bar{u\omega})\right|\geq\frac{D_{d} - \delta-4\varepsilon\left[1 + \frac{2(N-1)\beta_{X}}{\min{(1,\alpha\beta_{min}^{*})}}\right]}{2}\right\}\right)\\
&+&\mathbb{P}\left(\bigcap_{a\in {\cal{A}}}\left\{\left|\hat{p}_Z(a|\bar{u\omega})_n-p_{Z}(a|\bar{u\omega})\right|\geq\frac{D_{d} - \delta - 4\varepsilon\left[1 + \frac{2(N-1)\beta_{X}}{\min{(1,\alpha\beta_{\min}^{*})}}\right]}{2}\right\}\right).\\
\end{eqnarray*}
Therefore, if $\delta < D_{d} - 4\varepsilon\left[1 + \frac{2(N-1)\beta_{X}}{\min{(1,\alpha\beta_{\min}^{*})}}\right]$ and
$$
n>\frac{2(N+1)}{\left\{D_{d} - \delta -4\varepsilon\left[1 + \frac{2(N-1)\beta_{X}}{\min{(1,\alpha\beta_{\min}^{*})}}\right]\right\}\alpha^{d}}+d
$$
we can use Lemma  \ref{33lem10} to conclude the proof of this lemma.
\end{proof}


\begin{33pff}
{\rm By proceeding as in the proof of Theorem \ref{enteorema2} we can show that
$$
\mathbb{P}\left(\hat{{\cal{T}}}_{n}^{\delta,d}\left.\right|_{K}\neq {\cal{T}}_{X}\left.\right|_{K}\right)\leq\displaystyle\sum_{\underset{l(\omega)<K}{\omega \in {\cal{T}}_{X}}}\displaystyle\sum_{u\omega \in \hat{{\cal{T}}}_{3n}^{\delta, d}}\mathbb{P}\left(\Delta_{n}(u\omega)>\delta\right)+\displaystyle\sum_{\underset{l(\omega)<K}{\omega \in \hat{{\cal{T}}}_{n}^{\delta,d}}}\mathbb{P}\left(\bigcap_{u\omega \in {\cal{T}}_{X}\left.\right|_{d}}\left\{\Delta_{n}(u\omega)\leq\delta\right\}\right)
$$
if $d < n$. Therefore, by means of Lemmas \ref{33enlema11} and \ref{33lem12} we obtain 
$$
\mathbb{P}\left(\hat{{\cal{T}}}_{n}^{\delta, d}\left|\right._{K}\neq {\cal{T}}_{X}\left|\right._{K}\right)\leq c_{2}exp\left\{-c_{3}(n-d)\right\},
$$
where $c_{2}=48N^{d}(N+1)e^{\frac{1}{e}}$ and $c_{3}=\frac{\left[\min\left(D_{d} - \delta,\delta\right)-2\bar{k}\right]^{2}{\alpha^{2d}}}{128N^{2}e(d+1){\beta}_{\alpha, \ max}}$.}
\end{33pff}


\begin{333pfc}
{\rm It follows from Theorem \ref{3enteo2}, First Borel-Cantelli Lemma and the fact that the quotas for the estimation error of the truncated context tree are summable at $n$ for appropriate choices of $d$ and $\delta$.}
\end{333pfc}

\section{Comparisons}\label{comparacao}

In this section we compare the results obtained in this paper with the
corresponding ones presented in Collet, Galves and Leonardi
(2008). For this purpose, let {\bf X} and {\bf Y} be independent processes taking values on the
alphabet ${\cal{A}}=\left\{0,1\right\}$. Furthermore, we assume that these
processes are non-null and have summable continuity rate with the same constants $\alpha_{X}$ and $\beta_{X}$.

To compare the bounds we are going to couple the processes using the
same Bernoulli sequence  ${\boldsymbol \xi}$ independent of the processes
{\bf X} and {\bf Y}, with $\mathbb{P}\left(\xi_{t}=1\right)=1-\varepsilon$,
where $\varepsilon$ is fixed  in $(0,1)$. 
Now, we define the stochastically perturbed chains ${\bf Z_{1}}$, ${\bf Z_{2}}$ and ${\bf Z_{3}}$ by
\begin{eqnarray}\label{kcomp1}
Z_{1,t}&=& X_t + (1-\xi_t) \, \mbox{(mod 2)},\\
Z_{2,t}&=& X_t \cdot \xi_t,\label{kcomp2}\\
Z_{3,t} &=& \left\{\begin{array}{rl}
X_t, &\mbox{if}\ \xi_t=1,\\
Y_t,& \mbox{if}\ \xi_t=0 ,\\
\end{array}
\right.\label{kcomp3}
\end{eqnarray}
where $t \in {\mathbb Z}$. The model (\ref{kcomp1}) was proposed in Collet,  Galves  and Leonardi (2008). We assume that ${\bf Z_{i}}$ is compatible with $q_{i}(\cdot|\cdot)$ the law of the process, for $i=1,2 \ \mbox{or} \ 3$.

In the model (\ref{kcomp1}), the process ${\bf Z_{1}}$ will be different from the process {\bf X} whenever $\xi_{t}=0$, which occurs with probability $\varepsilon$. The process ${\bf Z_{2}}$, defined by (\ref{kcomp2}) is equal to the process {\bf X} with high probability $1-\varepsilon$.  The third contamination model, defined in (\ref{kcomp3}), is such that at each instant of time the process ${\bf Z_{3}}$ either is equal to the process {\bf X}, with high probability $1-\varepsilon$, or is equal to the process {\bf Y}, with small probability $\varepsilon$.

Theorem $1$ of  Collet,  Galves  and Leonardi (2008) states that 
\begin{equation*}
k_{1}:= \varepsilon\left[1 + \frac{4\beta_{X}}{\min{(1,\alpha_{X}\beta_{X}^{*})}}\right]\geq\sup_{a\in {\cal{A}}, \ \omega_{-k}^{-1} \in {\cal{A}}_{-k}^{-1}}\left|q_{1}\left(a|\omega_{-k}^{-1}\right)-p_{X}\left(a|\omega_{-k}^{-1}\right) \right|,
\end{equation*}
where $\varepsilon \in (0,1)$, $k\geq 0$ and  $\beta_{X}^{*}=\prod_{k=0}^{+\infty}(1-\beta_{k,X})<+\infty$. Our Theorem \ref{enteor1} says that
\begin{equation*}
k_{2}:=\varepsilon\left[1 + \frac{4\beta_{X}}{\min{(1,(1+\varepsilon)\alpha_{X}\beta_{X}^{*})}}\right]\geq\sup_{a\in {\cal{A}}, \ \omega_{-k}^{-1} \in {\cal{A}}_{-k}^{-1}}\left|q_{2}\left(a|\omega_{-k}^{-1}\right)-p_{X}\left(a|\omega_{-k}^{-1}\right) \right|.
\end{equation*}
Theorem \ref{enteor31} says that  
\begin{equation*}
k_{3}:=\varepsilon\left[2 + \frac{4\beta_{X}}{\min{(1,\alpha\beta^{*}_{\min})}}\right]\geq\sup_{a\in {\cal{A}}, \ \omega_{-k}^{-1} \in {\cal{A}}_{-k}^{-1}}\left|q_{3}\left(a|\omega_{-k}^{-1}\right)-p_{X}\left(a|\omega_{-k}^{-1}\right) \right|,
\end{equation*}
where $\alpha\beta^{*}_{\min}=\min\{\alpha_{X}\beta_{X}^{*},\alpha_{X}\}$. It is easy to see that
\begin{equation}\label{jkmocomp}
k_{2}\leq k_{1}\leq k_{3}.
\end{equation}

We observe that the inequality $k_{2}\leq k_{1}$ was expected by
definitions of the chains ${\bf Z_{1}}$ and ${\bf Z_{2}}$. To see this
we note that in the model (\ref{kcomp1}) it is possible to change both
symbols of the original process {\bf X}  by the Bernoulli effect
${\boldsymbol \xi}$ whereas  in the model (\ref{kcomp2}) only the symbol $1$ of the process {\bf X} has positive probability of being modified by process ${\bf\xi}$. Since the process ${\bf Z_{3}}$ is more general than the other two then the inequality $k_{1}\leq k_{3}$ was also expected. 

\section*{Acknowledgements}

This is part of the Ph.D. Thesis suported by FAPESP fellowship (grant
2008/10693-5). N. L. Garcia was partially supported by CNPq grants
475504/2008-9, 302755/2010-1 and 476764/2010-6

\bigskip

\end{document}